\newcommand{\dE}{\mathbb{E}}
\newcommand{\dN}{\mathbb{N}}
\newcommand{\dP}{\mathbb{P}}
\newcommand{\dR}{\mathbb{R}}
\newcommand{\dZ}{\mathbb{Z}}
\newcommand{\cB}{\mathcal{B}}
\newcommand{\cD}{\mathcal{D}}
\newcommand{\cH}{\mathcal{H}}
\newcommand{\cS}{\mathcal{S}}
\newcommand{\cX}{\mathcal{X}}
\newcommand{\si}{\sigma}
\newcommand{\veps}{\varepsilon}
\newcommand{\BRA}[1]{{{\left\{#1\right\}}}} % {1}
\newcommand{\PAR}[1]{{{\left(#1\right)}}} % (1)
\newcommand{\SBRA}[1]{{{\left[#1\right]}}} % [1]
\newcommand{\EXP}{\mathrm{Exp}}
\newcommand{\IND}{\mathbf{1}}
\newcommand{\so}{\text{\o}} % scandinavian o
\renewcommand{\qed}{\hfill{\ \ \rule{2mm}{2mm}} \vspace{0.2in}}
\newcommand{\ind}{1\hspace{-2.3mm}{1}}
\def\og{{\overline \gamma}}
\begin{document}

\section{Introduction}

The {\em chase-escape process} is a stochastic predator-prey dynamics which was studied by Kordzakhia \cite{kordzakhia} on a regular tree. In an earlier paper, Aldous and Krebs \cite{aldouskrebs} had introduced the {\em birth-and-assassination (BA) process}. The latter model can be seen as a natural limit of the chase-escape model. In \cite{MR2453554} the two models were merged into the {\em rumor scotching process}.  The original motivation of Aldous and Krebs was then to
analyze a scaling limit of a queueing process with blocking which
appeared in database processing, see Tsitsiklis, Papadimitriou and
Humblet \cite{tsitsiklis}. As pointed in \cite{MR2453554},  the
BA process is also the scaling limit of a rumor spreading model which is motivated by network epidemics and
dynamic data dissemination (see for example,  \cite{surveySIR},
\cite{andersson}, \cite{dynamicinformation}).

We may conveniently define the chase-escape processes as a SIR dynamics (see for example \cite{surveySIR} or \cite{andersson} for some background on standard SIR dynamics). This process represents the
dynamics of a rumor/epidemic spreading on the vertices of a graph
along its edges. A vertex  may be unaware of the rumor/susceptible
(S), aware of the rumor and spreading it as true/infected (I), or
aware of the rumor and trying to scotch it/recovered (R).

We fix a locally finite connected graph $G = (V,E)$. The chase-escape process is described by
a Markov process on $\cX=    \{ S, I , R\} ^V$. If $\{u,v\} \in E$, we write $u \sim v$. For $v \in V$, we also define the $\cX \to \cX$ maps
$I_v$ and $R_v$   by : for $x = ( x_u)_{u \in V}$, $( I_{v}(x) )_u = (R_{v}(x) )_u  = x_u$, if $u \ne v$ and $(I_{v} (x))_v =  I $,  $(R_{v}(x))_v = R$. Let $\lambda \in (0,1)$ be a fixed infection intensity. We then define the Markov process with
transition rates:
\begin{eqnarray*}
K (x,  I_{v}(x)) & =  & \lambda   \IND ( x_v  = S )  \sum_{ u  {\sim} v   } \IND ( x_u =  I ), \\
 K(x, R_{v}(x)) & =  & \IND ( x_v =  I ) \sum_{ u {\sim} v   } \IND ( x_u = R),
\end{eqnarray*}
and all other transitions have rate $0$. In words, a susceptible vertex is infected at rate $\lambda$ by its infected neighbors, and  an infected vertex is recovered at rate $1$ by its recovered neighbors. The absorbing states of this process are the states without $I$-vertices or with only $I$ vertices. In this paper, we are interested by the behavior of the process when at time $0$  there is a non-empty finite set of $I$ and $R$-vertices.

In \cite{kordzakhia}, this model was described as a predator-prey
dynamics: each vertex may be empty (S), occupied by a prey
(I) or occupied by a predator (R). The preys spread on unoccupied
vertices and predators spread on vertices occupied by preys.  If
$G$ is the $\dZ^d$-lattice and if there is no $R$-vertex, the
process is the original Richardson's model  \cite{richardson}.
With $R$-vertices, this process is a variant of the two-species
Richardson model with prey and predators, see for example
H{\"a}ggstr{\"o}m and Pemantle \cite{haggstrom98}, Kordzakhia  and
Lalley \cite{kordzakhia05}. There is a growing cluster of (I)-vertices spreading over (S)-vertices and a nested growing cluster of (R)-vertices spreading on (I)-vertices.

The chase-escape process differs from the classical SIR dynamics on the transition from $I$ to $R$: in the classical SIR dynamics, a $I$-vertex is recovered at rate $1$ independently of its neighborhood.

\paragraph{Chase-escape process on a tree}

If the graph $G = T = (V,E)$ is a rooted tree, the process is much simpler to study. We denote by $\so$ the root of $T$. For the range of initial conditions of interest (non-empty finite set of $I$ and $R$-vertices),  there is no real loss of generality to study the chase-escape process on the tree $T^\downarrow$ obtained from $T$ by adding a particular vertex, say $o$, connected to the root of the tree. At time $0$, vertex $o$ is in state $R$, the root $\so$ is in state $I$, while all other vertices are in state $S$ (see figure \ref{fig:ic}). We shall denote by $X(t) \in \{ S, I , R \}^{V}$ our Markov process  on the tree $T^\downarrow$. Under $\dP_{\lambda}$, $X$ is the chase escape process on $T^\downarrow$ with infection rate $\lambda$.

\begin{figure}[htb]
\centering \scalebox{0.6}{\input{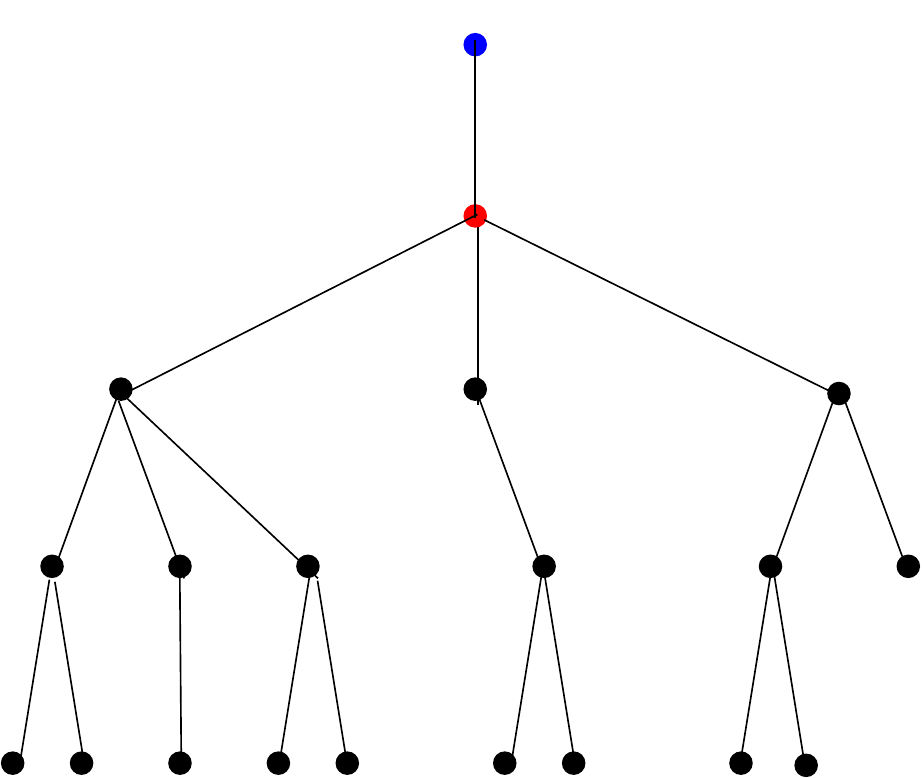_t}}
\caption{The initial condition : the root is $I$, $o$ is $R$, all other vertices are $S$.}\label{fig:ic}
\end{figure}

We say that the Markov process $X$ {\em gets extinct} if at some (random) time $\tau < \infty,$ there is no $I$-particle. Otherwise the process is said to {\em survive}. We define the probability of extinction as
$$
q_T(\lambda) = \dP_{\lambda} ( X \hbox{ gets extinct}).
$$
Obviously, if $T$ is finite then $q_{T} (\lambda) =1$ for any $\lambda \geq 0$. Before stating our results, we first need to introduce some extra terminology. 

There is a canonical way to represent the vertex set $V$ as a subset of  $\dN^f = \cup_{k=0} ^{\infty} \dN^k$ with $\dN^0 = \so$ and $\dN = \{1 , 2 \cdots \}$. If $k \geq 1$ and $v\in V $ is at distance $k$ from the root, then $v   = (i_1, \cdots, i_k) \in V \cap \dN^k$. The {\em genitor} of $v$ is $(i_1, \cdots, i_{k-1})$: it is the first vertex on the path from $v$ to the root $\so$ of length $k$. The {\em offsprings} of $v$ are set of vertices who have genitor $v$. They are indexed by $(i_1, \cdots, i_{k} , 1), \cdots , (i_1, \cdots, i_{k} , n_v)$, where $n_v$ is the number of offsprings of $v$. The {\em ancestors} of $v$ is the set of vertices $(i_1,\cdots, i_\ell)$, $0 \leq \ell \leq k-1$ with the convention $i_0 = \o$. Similarly, the \emph{$n$-th generation offsprings} of $v$ are the vertices in $V \cap \dN^{k+n}$ of the form $(v,i_{k+1}, \cdots, i_{k+n})$.

Recall that the \emph{upper growth rate} $d \in [1,\infty]$ of a rooted infinite tree $T$ is defined as
$$
d = \limsup_{k \to \infty} |V_k|^{1/k},
$$
where $V_k = V \cap \dN^k$ is the set of vertices at distance $k$ from the root $\o$ and $| \cdot |$ denotes the cardinal of a finite set. The {\em lower growth rate} is defined similarly with a $\liminf$. When the $\liminf$ and the $\limsup$ coincide, this defines the \emph{growth rate} of the tree. 

For example, for integer $d \geq 1$, we define the \emph{$d$-ary tree} as the tree where all vertices have exactly $d$ offsprings\footnote{It would be more proper to call this tree the complete infinite $d$-ary tree.}. Obviously, the $d$-ary tree has growth  rate $d$. More generally, consider a realization $T$ of a Galton-Watson tree with mean number of offsprings $d \in (1,\infty)$. Then, the Seneta-Heyde Theorem \cite{MR0234530,MR0254929} implies that, conditioned on $T$ infinite, the growth rate of $T$ is a.s. equal to $d$. For background on random trees and branching processes, we refer to  \cite{MR2047480,lyonsbook}.

For integer $n \geq 1$, we define $T^{*n}$ as the rooted tree on $V$ obtained from $T$ by putting an edge between all vertices and their $n$-th generation offsprings. For real $d > 1$, we say that $T$ is a \emph{lower $d$-ary} if for any $1 < \delta < d$, there exist an integer $n \geq 1$ and $v \in V$ such that the subtree of the descendants of $v$ in $T^{*n}$ contains a $\lceil {\delta}^n \rceil$-ary tree. Note that if $T$ is lower $d$-ary then its lower growth rate is at least $d$. Also, if $T$ is the realization of a Galton-Watson tree with mean number of offsprings $d \in (1,\infty)$ then, conditioned on $T$ infinite, $T$ is a.s. lower $d$-ary (for a proof see Lemma \ref{le:daryGWT} in appendix).

The first result is an extension of \cite[Theorem 1]{kordzakhia} where it is proved for \(d\)-ary trees. It describes the phase transition of the event of survival.

\begin{theorem}\label{th:RSstab}
Let $d > 1$ and \[\lambda_1   =    2d -1  - 2 \sqrt{ d (d-1)} . \]
If \(0 < \lambda <  \lambda_1\) and the upper growth rate of $T$ is at most $d$, then $q_T(\lambda) = 1$. If \(\lambda >  \lambda_1 \) and $T$ is lower $d$-ary, then $0 < q_T (\lambda) < 1$.
\end{theorem}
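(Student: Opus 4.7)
The plan is to attach to each edge $e$ of $T^\downarrow$ an independent $\tau_e \sim \EXP(\lambda)$ and to each vertex $v$ an independent $\sigma_v \sim \EXP(1)$. Setting $T_{\so}=0$, $S_{\so}=\sigma_{\so}$ and, for a non-root $v$ with parent $u$, $T_v = T_u + \tau_{(u,v)}$, $S_v = S_u + \sigma_v$, one recovers the chase-escape process. Unwinding the chain of conditions $T_{u_i} < S_{u_{i-1}}$ along an ancestral path $\so = u_0,\ldots,u_k = v$ shows that $v$ becomes infected if and only if $\sigma_{\so} > Y_i$ for all $1 \leq i \leq k$, where
\[
Y_i := \sum_{j=1}^i \tau_{u_j} - \sum_{j=1}^{i-1}\sigma_{u_j}.
\]
Set $\phi^* := 4\lambda/(1+\lambda)^2$; a direct check shows $\phi^*$ is increasing in $\lambda$ on $(0,1]$ with $d\phi^* = 1$ precisely at $\lambda = \lambda_1$.

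For the extinction half ($\lambda < \lambda_1$ with upper growth rate $\leq d$), I would apply the first moment method. Let $N_k$ be the number of infected vertices at depth $k$ and $P_k$ the common single-path probability. Markov's inequality gives $\dP_\lambda(N_k \geq 1) \leq \dE_\lambda[N_k] = |V_k|\,P_k$, and $P_k \leq \dP(\sigma_{\so} > Y_k)$. Chernoff at the optimal parameter $s=(1-\lambda)/2 \in (0,1)$ yields
\[
P_k \leq \dE\bigl[e^{s(\sigma_{\so}-Y_k)}\bigr] = \left(\frac{\lambda}{(\lambda+s)(1-s)}\right)^k = (\phi^*)^k.
\]
Since $d\phi^* < 1$ for $\lambda < \lambda_1$, and $|V_k| \leq (d+\varepsilon)^k$ eventually, choosing $\varepsilon > 0$ small gives $\dE_\lambda[N_k] \to 0$, whence $q_T(\lambda) = 1$.

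For the survival half ($\lambda > \lambda_1$ with $T$ lower $d$-ary), in the paper's regime $\lambda \in (\lambda_1, 1)$ one has $d\phi^* > 1$. I would pick $\delta \in (1,d)$ with $\delta\phi^* > 1$ and, by the lower $d$-ary property, fix $n \geq 1$ and $v \in V$ whose descendants in $T^{*n}$ contain a $\lceil\delta^n\rceil$-ary tree. Selecting one $n$-path of $T$ per edge of this subtree produces a subtree $T'' \subseteq T$ with $\lceil\delta^n\rceil^k$ vertices at depth $nk$ below $v$. Since the branches of a rooted tree evolve independently under chase-escape (each sub-branch's dynamics only depends on when the root of the sub-branch gets infected and recovers), survival on $T''$ implies survival on $T$. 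I would then view the dynamics on $T''$ as a branching random walk at scale $n$: each successfully-reached vertex at level $nk$ spawns $\lceil\delta^n\rceil$ independent attempts to reach level $n(k+1)$, each via an $n$-step walk with increments $\sigma_j - \tau_j$ killed at the shared threshold $\sigma_{\so}$. A Mogulskii-type lower bound gives success probability per $n$-segment $\geq c (\phi^*)^n$ whenever the starting $D$-value lies in a favourable compact window, so the expected number of successful offspring at level $n(k+1)$ is $\geq c(\delta\phi^*)^n > 1$ for $n$ large.

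The main obstacle is converting this supercriticality in mean into a genuine positive survival probability $1 - q_T(\lambda) > 0$. This calls for a second-moment (or Galton--Watson-in-varying-environment) argument at scale $n$: pairs of $n$-segments that share only their initial macro-vertex must contribute a correlation comparable to the square of the mean, and the coupling through the common random threshold $\sigma_{\so}$ has to be neutralised by conditioning on $\sigma_{\so}$ in a favourable window. Both points require careful handling of paths near the killing boundary $Y_i = \sigma_{\so}$, where a single fluctuation of the walk can terminate many descendants simultaneously.
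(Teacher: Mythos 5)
Your subcritical half is essentially the paper's argument: a first moment bound with the Chernoff parameter $\theta=(1-\lambda)/2$ giving the per-path rate $\phi^*=4\lambda/(1+\lambda)^2$, combined with $|V_k|\leq (d+\veps)^k$; this is correct (note that what you really use is the geometric, hence summable, decay of $\dE_\lambda[N_k]$, so that only finitely many vertices are ever infected and extinction follows).

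In the supercritical half there is a genuine gap: you stop exactly at the hard step. You recast the dynamics as a branching random walk killed at the shared random threshold $\sigma_{\so}$, compute (heuristically, via a Mogulskii-type bound you do not prove) that the expected number of successful $n$-blocks exceeds $c(\delta\phi^*)^n>1$, and then explicitly defer the conversion of this mean supercriticality into $q_T(\lambda)<1$ to an unspecified second-moment argument that must control the correlations coming from the common threshold $\sigma_{\so}$ and from lineages hugging the killing boundary. That conversion is the entire difficulty of this half of the theorem, and it is not carried out. The paper's construction shows how to bypass it: instead of killing at the random threshold, it requires of each $m$-block (an edge of the $\lceil\delta^m\rceil$-ary subtree of $T^{*m}$) that the total infection time along the block be at most $m/c$ \emph{and} that the total recovery time along the block be at least $m/c$, with $c=(1+\lambda)/2$. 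These two conditions involve disjoint families of exponential clocks compared to a deterministic linear barrier, so block successes are independent across blocks and the successful blocks form an honest Galton--Watson tree whose mean is $\lceil\delta^m\rceil e^{-m(J(\lambda/c)+J(1/c))+o(m)}=(\delta\phi^*)^{m+o(m)}>1$ for $m$ large; on the fixed time grid $t_k=t_0+km/c$, survival of this embedded Galton--Watson tree (together with a positive-probability initial event at the root of the subtree) forces survival of the chase-escape process, with no second-moment estimate needed. Two smaller omissions: you only argue for $\lambda\in(\lambda_1,1)$ and do not reduce the general case $\lambda\geq 1$ to it (the paper does this through a coupling showing $\lambda\mapsto q_T(\lambda)$ is non-increasing, which matters since $d\phi^*\leq 1$ again when $\lambda$ is large), and the positivity $q_T(\lambda)>0$ is not addressed, though that part is easy.
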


Note that in the classical SIR dynamics, it is easy to check that the critical value of $\lambda$ is $\lambda = 1  / (d-1)$. Also, for any $d > 1$, $\lambda_1 < 1$ and,
\begin{equation}\label{eq:asymlambda}
\lambda_1 \sim_{ d \uparrow \infty} \frac{1}{4d}.
\end{equation}

The proof of Theorem \ref{th:RSstab} will follow a strategy parallel to \cite{kordzakhia,aldouskrebs}. We employ techniques akin to the study the infection process in the Richardson
model. They will be based on large deviation estimates on the probability that a single vertex is $I$ at time $t$.

 To our knowledge, there is no known closed form expression for
the extinction probability $q_T (\lambda)$. Our next result determines an asymptotic equivalent for the probability of survival  for \(\lambda\) close to $\lambda_1$.
Our method does not seem to work on the sole assumption that $T$ has growth rate $d > 1$ and is lower $d$-ary. We shall assume that $T$ is a realization of a Galton-Watson tree with offspring distribution $P$ and $$d = \sum_{k=1} ^ \infty k P (k)  > 1.$$
We consider the annealed probability of extinction:
$$
q ( \lambda ) = \dE' [  q_T ( \lambda ) ]  = \dP'_\lambda( X \hbox{ gets extinct}) ,
$$
where the expectation $\dE'(\cdot)$ is with respect to the randomness of the tree and $\dP'_\lambda ( \cdot )  = \dE'  \left( \dP_\lambda ( \cdot ) \right) $ is the probability measure with respect to the joint randomness of $T$ and $X$. Note that in the specific case $d$ integer and $P (d) = 1$, $T$ is the $d$-ary tree and the measures $\dP'_\lambda$ and $\dP_\lambda$ coincide.

\begin{theorem} \label{th:RSext}
Assume further that the offspring distribution has finite second moment. There exist  constants $c_0, c_1 > 0$ such that for all $\lambda_1  < \lambda < 1 $,
$$
c_0 \omega^3  e^{  - \frac{  ( 1 -\lambda_1 ) \pi }{ 2 ( d ( d-1) ) ^{ 1/ 4 } }  \omega^{-1} }   \leq   1- q(\lambda) \leq  c_1  e^{  - \frac{  ( 1 -\lambda_1 ) \pi }{ 2 ( d ( d-1) ) ^{ 1/ 4 } }  \omega^{-1} },
$$
with
$$
\omega = \sqrt{\lambda - \lambda_1 }.
$$
\end{theorem}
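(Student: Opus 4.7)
The plan is to recast the extinction problem as a nonlinear integral equation for the extinction probability parameterized by the root's ``lifetime,'' spectrally analyze its linearization, and squeeze the true solution between a super- and a sub-solution whose Laplace transforms at $s=1$ give the exponential rate.

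On the Galton--Watson tree, call $\phi(\ell)$ the annealed extinction probability given that an $I$-vertex has lifetime $\ell$ before being turned into $R$. Conditioning on the offspring count and on whether each child is infected within time $\ell$, each infected child inherits a fresh lifetime $\ell-x+E$ with $E\sim\mathrm{Exp}(1)$ independent. Setting $\psi=1-\phi$ and using the offspring PGF $f$ with $f'(1)=d$, branching yields
\[
\psi(\ell) \;=\; d\,h(\ell) - \tfrac12 f''(1)\,h(\ell)^2 + O(h(\ell)^3),\qquad h(\ell)\;=\;\lambda\!\int_0^\ell e^{-\lambda x}\!\!\int_0^\infty e^{-y}\psi(\ell-x+y)\,dy\,dx,
\]
and $1-q(\lambda)=\int_0^\infty e^{-\ell}\psi(\ell)\,d\ell=\widehat\psi(1)$. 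The finite second-moment hypothesis $f''(1)<\infty$ is exactly what is needed to control the quadratic term.

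Next I would analyze the linear operator $L\psi:=d\,h$ via Laplace transform in $\ell$. A direct computation gives $\widehat{L\psi}(s)=d\lambda(\widehat\psi(1)-\widehat\psi(s))/[(s-1)(s+\lambda)]$, so the linear fixed-point relation rearranges to $\widehat\psi(s)\,P(s)=d\lambda\,\widehat\psi(1)$ with characteristic polynomial $P(s)=s^2+(\lambda-1)s+(d-1)\lambda$. The discriminant of $P$ is $\lambda^2-(4d-2)\lambda+1$, which vanishes exactly at $\lambda=\lambda_1$; for $\lambda>\lambda_1$ the roots are complex conjugate $\alpha_\pm=\alpha_0\pm i\gamma$ with $\alpha_0=(1-\lambda)/2$ and $\gamma=(d(d-1))^{1/4}\omega+O(\omega^3)$. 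The linearized solution therefore behaves like $A e^{\alpha_0\ell}\sin(\gamma\ell+\theta)$, which pinpoints the target exponential scale $\alpha_0\pi/\gamma=(1-\lambda_1)\pi/(2(d(d-1))^{1/4})\cdot\omega^{-1}$.

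For the upper bound I would exploit that $f''(1)\ge 0$ forces $\psi\le L\psi$, so $\psi$ is dominated by any supersolution of $L$. An explicit candidate $\overline\psi(\ell)=A e^{\alpha_0\ell}\sin(\gamma\ell+\theta)$ on $[0,\ell_\star]$ with $\ell_\star\simeq \pi/\gamma$, truncated at $1$ afterwards and matched so that $\overline\psi\ge L\overline\psi$ at the junction, forces $A\asymp e^{-\alpha_0\pi/\gamma}$. Integrating against $e^{-\ell}$ (using $\alpha_0<1$, so the bulk of the mass comes from the growing-oscillatory part) yields $\widehat{\overline\psi}(1)\le c_1 e^{-\alpha_0\pi/\gamma}$.

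The lower bound is the delicate step and is where I expect the main difficulty. I would build a nonnegative subsolution $\underline\psi\le\psi$ by iterating the full nonlinear map starting from a trial $\underline\psi_0(\ell)=c\,A\,e^{\alpha_0\ell}\sin(\gamma\ell)_+\,\mathbf{1}_{[0,\pi/\gamma]}$, with $A$ slightly smaller than in the upper bound to guarantee that $\underline\psi_0\le L\underline\psi_0-\tfrac12 f''(1)(L\underline\psi_0)^2$ uniformly on the interval $[0,\pi/\gamma]$. The required shrinkage of $A$ costs a polynomial factor in $\omega$, and the Laplace integral $\int_0^\infty e^{(\alpha_0-1)\ell}\sin(\gamma\ell)\,d\ell\asymp \gamma\asymp\omega$ provides another factor $\omega$; these combine to the claimed $\omega^3$ prefactor. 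The obstacle here is twofold: preserving positivity of the nonlinear iteration despite the oscillatory kernel, and capturing the full exponential rate without losing a further $\omega$-factor in the exponent. The second-moment assumption on the offspring distribution is crucially used both to expand $f$ to second order with a uniform remainder and to bound the contribution of the quadratic term against the linearized growth over the whole interval $[0,\pi/\gamma]$.
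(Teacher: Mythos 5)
Your reformulation is sound at the level of the linearization: the kernel computation, the characteristic polynomial $P(s)=s^2+(\lambda-1)s+(d-1)\lambda$, the complex roots $\frac{1-\lambda}{2}\pm i\gamma$ with $\gamma=(d(d-1))^{1/4}\omega\,(1+O(\omega^2))$, and the resulting rate $\frac{(1-\lambda)\pi}{2\gamma}$ are exactly the data the paper extracts from the ODE $x''-(1-\lambda)x'+\varphi(x)=0$ obtained after the change of variable $x=\psi^{-1}(Q_\lambda)$, and your bookkeeping for the $\omega^3$ prefactor (an $\omega^2$ loss in the admissible amplitude, governed by the second moment through $\psi''(1)$, times an $\omega$ from the Laplace integral) matches Lemma \ref{le:LBx0}. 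The lower-bound scheme is also viable in principle: the survival function is the decreasing limit of the finite-generation survival functions, i.e. of the iterates of the monotone nonlinear map $N$ started from the constant function $\mathbf 1$, so any nonnegative sub-solution $\underline\psi_0\le N(\underline\psi_0)$ does sit below it after monotone iteration; and the quadratic control can be taken exact, $1-f(1-h)\ge dh-\tfrac12 f''(1)h^2$, since $f''$ is nondecreasing on $[0,1]$ (your $O(h^3)$ term would otherwise require a third moment).

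The genuine gap is in the upper bound. From $\psi\le L\psi$ you cannot conclude that ``$\psi$ is dominated by any supersolution of $L$.'' First, there is no local maximum principle here: $L\psi(\ell)$ involves $\psi(\ell-x+y)$ for all $y\ge 0$, hence values of $\psi$ at arguments arbitrarily larger than $\ell$, so the equation is not of Volterra type and cannot be compared ``forward in $\ell$.'' Second, monotone iteration runs the wrong way: $\psi$ is the \emph{maximal} fixed point in $[0,1]$, namely $\lim_n N^n(\mathbf 1)$, whereas iterating from a supersolution $\overline\psi\le\mathbf 1$ only yields $N^n(\overline\psi)\le N^n(\mathbf 1)$, which gives no upper bound on $\psi$; matching $\overline\psi$ to $1$ at the junction does not repair this. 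What is missing is precisely the a priori information about the true solution that the paper establishes before any comparison is attempted: monotonicity of $Q_\lambda(\cdot)$ with limit $\rho$ at infinity, the derivative bound $-1<x'(t)<0$ of Lemma \ref{le:Uyprime0}, and (for the other direction) the quantitative Lemma \ref{le:compode00}; these are what legitimize the exit-time comparison of Lemma \ref{le:compode0}, of which your sub/supersolution squeeze is the integral-equation shadow. Without an analogue of these global constraints on $\psi$ itself (e.g. that $\psi$ is nondecreasing with limit $1-\rho$ and that its phase-plane trajectory can only leave the linear regime at definite distance from the origin), the inequality $1-q(\lambda)\le c_1 e^{-\frac{(1-\lambda)\pi}{2\gamma}}$ does not follow from your construction, and closing this is where the real work of the paper lies.
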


Note that the behavior depicted in Theorem \ref{th:RSext} contrasts with the classical SIR dynamics, where $1 - q(\lambda)$ is of order $( \lambda  (d - 1)  -1 )_+$.  This result should however be compared to similar results in the Brunet-Derrida model of branching random walk killed
 below a linear barrier, see Gantert, Hu and Shi \cite{MR2779399} and also B\'erard and Gou\'er\'e \cite{MR2774095}.
 As in this last reference, our approach is purely analytic. We will first check that $q(\lambda)$ is
 related to  a second order non-linear differential equation. Then, we will rely on comparisons with linear
 differential equations. A similar technique was already used by Brunet and Derrida \cite{MR1473413}, and notably
 also in Mueller, Mytnik and Quastel \cite[section 2]{MR2793860}. 

A possible parallel with the Brunet-Derrida model of branching random walk killed
 below a linear barrier is the following. Consider a branching random walk on $\dZ$ started from a single particle at site $0$ where the particles may only move by one step on the right. If we are only concerned by the extinction, we can think of this process as some branching process without walks where a particle at site $k$ gives birth to particles at site $k+1$. We can in turn represent this process by a growing random tree where the set of vertices at depth $k$ is the set of particles at site $k$. Hence (I)-vertices play the role of the particles, the branching mechanism is the spreading of the (I)-vertices over the (S)-vertices and the set of (R)-vertices is a randomly growing barrier which absorbs the particles/(I)-vertices.  Kortchemski \cite{IK} has recently built an explicit coupling of a branching random walk with the chase-escape process on a tree.

In the case $ 0 < \lambda < \lambda_1$, the process $X$ stops a.s.\ evolving after some finite $\tau$. We define $Z $ as the total progeny of the root, i.e. the total number of
recovered vertices (excluding the vertex $o$ of $T^\downarrow$) at time $\tau$. It is the number of vertices which will have been infected before the process reaches its absorbing state. We define the annealed parameter:
$$
\gamma (\lambda) = \sup \left\{ u  \geq 0 :  \dE'_{\lambda} [ Z ^u] < \infty \right \}.
$$
The scalar $\gamma(\lambda)$ can be though as a power-tail exponent of the variable $Z$ under the annealed measure $\dP'_\lambda$. In particular, for any $ 0 < \gamma < \gamma(\lambda)$, from Markov Inequality, there exists a constant $c > 0$ such that for all $t \geq 1$,
$ \dP'_{\lambda} ( Z \geq  t ) \leq c t^{-\gamma}.$ Conversely, if there exist $c,\gamma >0$ such that for all $t \geq 1$,
$ \dP'_{\lambda} ( Z \geq t ) \leq c t^{-\gamma}$, then $\gamma(\lambda) \geq \gamma$.
We define
$$
\gamma_{P} = \sup \left\{ u  \geq 1 :  \sum_{k=1} ^ \infty k^u P(k) < \infty \right \} \geq 1.
$$
\begin{theorem} \label{th:RStail}
%Let  $T$ be a Galton-Watson tree with offspring distribution $\pi$ and $ d = \sum_k k \pi(k) > 1$.
\begin{itemize}
\item[(i)]  For any $0 < \lambda < \lambda_1$,
$$
\gamma (\lambda) =\min\left(\frac{ \lambda^2 - 2 d
\lambda +1 - (1 - \lambda) \sqrt{ \lambda^2 - 2 \lambda ( 2 d - 1) + 1}}{2\lambda (d -1)} , \gamma_P \right).$$
\item[(ii)] Let $1 \leq u < \gamma_{P}$,  $A_u = u^2 (d-1) + 2 u d  + ( d
-1)$, and 
$$
 \lambda_u = \frac{A_u - \sqrt{ A_u ^2 - 4 u^2}}{2 u}.
$$
If $\lambda < \lambda_u $ then $\dE'_\lambda[  Z^u ] $ is finite. If  $\lambda >
\lambda_u$, $ \dE'_\lambda[  Z ^u ]$ is infinite. \end{itemize}
\end{theorem}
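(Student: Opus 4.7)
The plan is to reduce the tail behavior of $Z$ to the spectrum of an integral operator attached to a Markov chain of \emph{inherited residual lifetimes} along an infection path on the Galton--Watson tree. Set $Z(g)$ to be the total number of ever-infected descendants (including itself) of a vertex $v$ such that, at the moment $v$ becomes infected, its parent still has residual $I$-lifetime $g$ before recovering. The exponential clocks give a clean decomposition: $v$ stays in state $I$ for an additional duration $g+E$ with $E\sim\mathrm{Exp}(1)$, and each of the $N$ offspring of $v$ is infected at a time $T\sim\mathrm{Exp}(\lambda)$ provided $T\le g+E$, inheriting then the lifetime $G'=g+E-T$. Computing the joint density yields the substochastic transition kernel
\[
P(g,dg')=\tfrac{\lambda}{1+\lambda}\bigl(\IND\{g'<g\}e^{\lambda(g'-g)}+\IND\{g'\ge g\}e^{-(g'-g)}\bigr)\,dg',
\]
the survival probability $S(g)=1-e^{-\lambda g}/(1+\lambda)$, and the fundamental recursion $Z(g)=1+\sum_{k=1}^{N}\IND\{T_k\le g+E\}\,Z^{(k)}(G_k)$, with $Z=Z(0)$.

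For part (ii), the key spectral fact is that the linearized operator $\mathbf{L}f(g):=d\int P(g,dg')f(g')$ acts on $e^{\alpha g}$, $\alpha\in(-\lambda,1)$, essentially by multiplication by $\rho(\alpha)=d\lambda/((\alpha+\lambda)(1-\alpha))$, with $\rho(\alpha)=1$ exactly at the two real positive roots $r_\pm$ of $r^2-(1-\lambda)r+(d-1)\lambda=0$. An $Ae^{rg}+c$ ansatz in $\phi_1=1+\mathbf{L}\phi_1$ produces the minimal non-negative solution $\phi_1(g)=\bigl((\lambda+r_-)e^{r_-g}-\lambda\bigr)/(\lambda(d-1))$, so $\dE'[Z(g)]\asymp e^{r_-g}$. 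For integer $u\in[1,\gamma_P)$, binomial expansion of $Z(g)^u$ together with the factorial moments of $N$ gives an equation $\phi_u-\mathbf{L}\phi_u=R_u[\phi_1,\dots,\phi_{u-1}]$, whose forcing $R_u(g)$ inherits the asymptotic growth $e^{ur_-g}$ from the leading monomial $\phi_1^u$; by induction $\phi_u$ is finite iff $\rho(ur_-)<1$, i.e.\ $ur_-<r_+$. Using the Vieta identities $r_+r_-=(d-1)\lambda$, $r_++r_-=1-\lambda$, the boundary condition $ur_-=r_+$ rearranges in $\lambda$ as the quadratic $u\lambda^2-A_u\lambda+u=0$, whose smaller root is exactly $\lambda_u$. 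Real $u\in[1,\gamma_P)$ is handled by interpolating via a Markov-type tail bound $\dP'(Z>t)\le ct^{-u'}$ for each integer $u'<\gamma^*(\lambda)$ and then integrating $\dE'[Z^u]=u\int_0^\infty t^{u-1}\dP'(Z>t)\,dt$.

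The matching lower bound $\dE'[Z^u]=\infty$ for $\lambda>\lambda_u$ comes from a many-to-one / spine change of measure: tilting $P$ by the non-negative eigenfunction associated to $\rho(ur_-)$ turns the spine chain into a positive recurrent Markov chain, and the $u$-th moment of the total progeny then equals, up to a Perron factor, $\sum_k\rho(ur_-)^k$, which diverges precisely when $\rho(ur_-)\ge 1$. Part (i) is then deduced from (ii) by inverting $u\mapsto\lambda_u$ to read off $\gamma^*(\lambda)$, combined with the elementary domination $Z\ge N'$ where $N'\mid N\sim\mathrm{Bin}(N,\lambda/(1+\lambda))$ is the number of infected children of the root: binomial concentration yields $\dE'[Z^u]\ge c_u\,\dE'[N^u]$, enforcing the truncation at $\gamma_P$. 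The main obstacle I expect is the clean execution of the spine tilt: the operator $\mathbf{L}$ does not leave pure exponentials strictly invariant---an $O(e^{-\lambda g})$ correction appears when it acts on $e^{\alpha g}$---so the bona fide eigenfunction of $\mathbf{L}$ for eigenvalue $\rho(ur_-)$ must be constructed as a two-term combination of exponentials, and positive recurrence of the tilted chain must be verified via its Laplace-transform representation before the moment divergence can be read off.
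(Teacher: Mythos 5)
Your residual-lifetime kernel is, after the change of variables $Z(g)\stackrel{d}{=}Y(g+E)$, exactly the paper's integral operator: your quasi-eigenrelation $\mathbf{L}e^{\alpha g}=\rho(\alpha)e^{\alpha g}+O(e^{-\lambda g})$ with $\rho(\alpha)=d\lambda/((\alpha+\lambda)(1-\alpha))$ is the identity \eqref{eq:defjefio}, the roots $r_\pm$ are the paper's $\alpha,\beta$ of \eqref{eq:alphabeta}, your $\phi_1$ agrees with $f_1$, and your algebra turning $u r_-=r_+$ into $u\lambda^2-A_u\lambda+u=0$ is correct. The genuine gap is the finiteness half of (ii) (and hence the lower bound in (i)) for \emph{non-integer} $u$. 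Your induction only produces integer moments, and the proposed patch --- Markov tail bounds $\dP'_\lambda(Z>t)\le c\,t^{-u'}$ for integers $u'$ below the threshold, then $\dE'_\lambda[Z^u]=u\int_0^\infty t^{u-1}\dP'_\lambda(Z>t)\,dt$ --- only gives $\dE'_\lambda[Z^u]<\infty$ for $u$ strictly below the largest such integer. If, say, $\overline{\gamma}=\beta/\alpha\in(2,3)$ and $u\in(2,\overline{\gamma})$ (equivalently $\lambda<\lambda_u$ for this non-integer $u$), your argument yields nothing for that $u$; since $\gamma(\lambda)$ is a supremum that must be approached through non-integer exponents, the identity $\gamma(\lambda)=\min(\overline{\gamma},\gamma_P)$ is not established. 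The paper closes precisely this gap by running the recursion directly at real exponent $u=p+v$, $v\in(0,1)$, via the fractional multinomial inequality \eqref{eq:multiu} combined with Jensen's bound $f_{q+v}\le f_p^{(q+v)/p}$; some such device is needed and cannot be replaced by interpolation between integer moments.

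Two secondary soft spots. First, in the $g$-parametrization the children's terms are \emph{not} independent (they share the parent's extra lifetime $E$, both in the indicators and in the inherited $G_k$), so the multinomial expansion does not factor into products of the $\phi_{p_i}$ given $g$ alone; one must condition on the parent's death time $t=g+E$ (i.e.\ work with the paper's $Y(t)$) before factorizing, and one should truncate ($\min(Y,\kappa)$) before taking expectations, as the paper does, since finiteness is exactly what is at stake. Second, the divergence half as you state it is heuristic: the $u$-th moment is not computed by a many-to-one formula, and no positive recurrence is needed. A rigorous route in your framework is superadditivity $(x+y)^u\ge x^u+y^u$, which gives $\phi_u\ge \mathbf{L}^k\phi_u\ge c\,\mathbf{L}^k h$ with $h$ an exact positive eigenfunction; note that $h$ must combine the two conjugate exponents $a,a'$ solving $\rho(a)=\rho(a')=\rho(u r_-)$ --- a combination using $e^{-\lambda g}$ is not an eigenfunction, since $\mathbf{L}e^{-\lambda g}$ produces a $g\,e^{-\lambda g}$ term --- whence $\phi_u(0)\ge c\,\rho(u r_-)^k h(0)\to\infty$ when $\rho(u r_-)>1$ (and trivially when $u r_-\ge 1$ or $\lambda>\lambda_1$). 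The paper reaches the same conclusion differently, by deriving a contradiction from an explicitly solvable $\epsilon$-perturbed linear equation. These two points are fixable; the non-integer-moment issue is the one that must be addressed for the theorem as stated.
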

It is straightforward to check that $(i)$ is equivalent to $(ii)$. Also, for $u=1$,  $\lambda_u$ coincides with $\lambda_1$ defined in Theorem \ref{th:RSstab}. It follows that $\gamma(\lambda) \geq 1$ for all $0 < \lambda < \lambda_1$. Theorem \ref{th:RStail}  contrasts with classical SIR dynamics. For example, if $T$ is the $d$-ary tree, for all $\lambda < 1/ (d -1)$  there exists a constant
$c>0$ such that $\dE'_\lambda \exp(c S) < \infty$ where $S$ is the total progeny in the classical SIR dynamics. Here, the heavy-tail
phenomenon is an interesting feature of the chase-escape process. Intuition suggest that large values of $Z$ come from a (I)-vertex which is not recovered before an exceptionally long time. Indeed, in the chase escape process, a (I)-vertex which is not recovered by time $t$ will typically have a progeny which is exponentially large in $t$ (this is not the case in the classical sub-critical SIR dynamics, the progeny of such vertex will typically be of order $1$) .  A similar phenomenon appears also in the Brunet-Derrida model, see Addario-Berry and Broutin \cite{MR2834719}, A{\"{\i}}d{\'e}kon \cite{MR2737710} and A{\"{\i}}d{\'e}kon Hu and Zindy \cite{AHZ}. Note finally that
$$\gamma (\lambda) \sim_{\lambda \downarrow 0}
\min \PAR{ \frac{1}{(d-1)\lambda} , \gamma_P } \quad \hbox{ and } \quad  \gamma (\lambda) \sim_{\lambda \uparrow \lambda_1} 1.$$

By recursion, we will also compute the moments of $Z$. The computation of the first moment gives
\begin{theorem} \label{th:RStail2} %Let  $T$ be  a Galton-Watson tree with offspring distribution $P$ and $ d = \sum_k k \pi(k) > 1$. 
If $ 0 < \lambda \leq \lambda_1$ and $ \Delta = \lambda^2 - 2 \lambda ( 2 d - 1) + 1$, then
$$ \dE'_\lambda [ Z ] =  \frac {2d}{(d-1)(1 + \lambda + \sqrt \Delta)}- \frac{1}{d-1}. $$
\end{theorem}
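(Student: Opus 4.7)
My plan is to compute $\dE'_\lambda[Z]$ by reducing it to a first-passage problem for a one-dimensional random walk. Since $\lambda\leq\lambda_1$ forces almost-sure extinction (Theorem \ref{th:RSstab}), $Z$ equals the number of vertices that ever become infected, and by exchangeability the annealed probability $p_k:=\dP'_\lambda(v_k\text{ infected})$ depends only on the depth $k$. Combined with $\dE'[|V_k|]=d^k$,
\[
\dE'_\lambda[Z] \;=\; \sum_{k\geq 0} d^k p_k.
\]
On a tree, $\{v_k\text{ infected}\}$ has a clean iid description along the path $\so=v_0,\dots,v_k$: attach infection clocks $A_i\sim\mathrm{Exp}(\lambda)$ and recovery clocks $B_i\sim\mathrm{Exp}(1)$ to the $i$-th edge, and $B_0\sim\mathrm{Exp}(1)$ to the edge $o\to\so$. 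Then $v_k$ is infected iff the random walk $W_j := \sum_{i=1}^{j}(B_{i-1}-A_i)$ with iid increments $\xi = B - A$ stays strictly positive through step $k$, so $p_k = \dP(N>k)$ for $N:=\inf\{j\geq 1 : W_j\leq 0\}$. Summation by parts gives $\dE'_\lambda[Z] = (\dE[d^N]-1)/(d-1)$.

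The computation of $\dE[d^N]$ rests on two observations. First, by memorylessness of the $\mathrm{Exp}(\lambda)$-component in the negative direction, the overshoot $E:=-W_N$ is $\mathrm{Exp}(\lambda)$-distributed and independent of $N$. Second, $M_j := d^j e^{-sW_j}$ is a martingale iff $d\,\dE[e^{-s\xi}] = d\lambda/[(1+s)(\lambda-s)] = 1$, whose roots are
\[
s_\pm \;=\; \frac{\lambda-1\pm\sqrt{\Delta}}{2},\qquad \Delta = \lambda^2 - 2\lambda(2d-1) + 1 \;\geq 0 \text{ on } 0 < \lambda \leq \lambda_1.
\]
Choosing $s=s_+$ (the branch satisfying $s_+=0$ at $d=1$) and applying optional stopping at $N\wedge n$, then letting $n\to\infty$, we get
\[
1 \;=\; \dE\bigl[d^N e^{-s_+W_N}\bigr] \;=\; \dE[d^N]\cdot\dE\bigl[e^{s_+E}\bigr] \;=\; \dE[d^N]\cdot\frac{\lambda}{\lambda-s_+},
\]
hence $\dE[d^N] = (\lambda-s_+)/\lambda = (1+\lambda-\sqrt{\Delta})/(2\lambda)$. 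Rationalising via the identity $(1+\lambda-\sqrt{\Delta})(1+\lambda+\sqrt{\Delta}) = (1+\lambda)^2-\Delta = 4d\lambda$ yields $\dE[d^N] = 2d/(1+\lambda+\sqrt{\Delta})$, and substituting into the formula above gives the stated expression.

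The main technical point is the optional-stopping passage to the limit, since $s_+\leq 0$ makes $M_j$ unbounded. This is handled by the negative drift $\dE[\xi] = 1 - 1/\lambda < 0$ (so $N<\infty$ a.s.) together with Cram\'er's estimate $\dP(N>n) \leq C\rho^n$ with $\rho = 4\lambda/(1+\lambda)^2$; on our range the condition $\Delta\geq 0$ is exactly $d\rho\leq 1$, which controls $d^n\dP(N>n)$ and legitimises dominated convergence in $\dE[M_{N\wedge n}] = 1$ as $n\to\infty$. With this in hand, both sides of $\dE'_\lambda[Z] = (\dE[d^N]-1)/(d-1)$ are finite and the identification is complete.
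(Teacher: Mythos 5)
Your route is genuinely different from the paper's: the paper writes a recursive distributional equation for $Y(t)$, turns its mean into the linear ODE $x''-(1-\lambda)x'+\lambda(d-1)x=0$, and identifies the relevant solution by a monotone truncation scheme, whereas you reduce $\dE'_\lambda[Z]$ by a many-to-one identity to $\sum_{k\geq0} d^k\,\dP(N>k)$ for a killed random walk and evaluate $\dE[d^N]$ by optional stopping of $M_j=d^je^{-s_+W_j}$ combined with the memoryless overshoot. The many-to-one step is legitimate (the clocks are independent of the Galton--Watson structure), your $s_+$ is $-\alpha$ in the paper's notation, the overshoot claim is correct, and the algebra does reproduce the stated formula. (A side remark: invoking Theorem \ref{th:RSstab} for a.s.\ extinction at $\lambda=\lambda_1$ is both unnecessary and, within the paper's logic, circular, since extinction at $\lambda_1$ is Corollary \ref{cor:N}, deduced from this very theorem; but nothing in your argument actually uses extinction, as $Z=|R(\infty)|$ and the identity $\dE'_\lambda[Z]=\sum_k d^k\dP(N>k)$ make sense regardless.)

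The genuine gap is in the step you yourself call the main technical point. What must vanish is $\dE[M_n\IND(N>n)]=d^n\,\dE[e^{-s_+W_n}\IND(N>n)]$, and since $s_+\leq0$ while $W_n>0$ on $\{N>n\}$, this quantity is \emph{at least} $d^n\dP(N>n)$: your Chernoff/Cram\'er estimate controls the wrong side of the inequality. Moreover $d^n\dP(N>n)\leq\bigl(4d\lambda/(1+\lambda)^2\bigr)^n$ gives only boundedness, and at $\lambda=\lambda_1$ (included in the theorem, and precisely where the constant matters) this bound equals $1$ for every $n$; no integrable dominating variable is produced, so ``dominated convergence'' is not justified as stated. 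Fatou applied to $\dE[M_{N\wedge n}]=1$ does give one direction for free, namely $\dE[d^N]\,\dE[e^{s_+E}]\leq1$ and hence the upper bound on $\dE'_\lambda Z$; the missing piece is exactly $\dE[M_n\IND(N>n)]\to0$. A short correct fix is a Cram\'er tilt: change the law of the increment $\xi$ by the density $e^{-s_+x}/\dE[e^{-s_+\xi}]$; since $d\,\dE[e^{-s_+\xi}]=1$ one gets $d^n\dE[e^{-s_+W_n}\IND(N>n)]=\tilde\dP(N>n)$, and the tilted mean increment equals $\frac{1}{1+s_+}-\frac{1}{\lambda-s_+}$, which is negative for $\lambda<\lambda_1$ and zero at $\lambda=\lambda_1$; a non-degenerate walk with nonpositive drift a.s.\ enters $(-\infty,0]$, so $\tilde\dP(N>n)\to0$ on the whole range $0<\lambda\leq\lambda_1$. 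With that repair (or any equivalent argument showing the escaping mass vanishes), your proof is complete and equivalent in strength to the paper's.
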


Theorem \ref{th:RStail2} implies a surprising right discontinuity of the function $\lambda \mapsto \dE'_\lambda Z $ at the critical intensity $ \lambda = \lambda_1$:
$ \dE'_{\lambda_1} Z  = 2d / ( ( d-1) (1 + \lambda_1)) - 1  / ( d-1)  <  \infty$. Again, this discontinuity contrasts with what happens
in a standard Galton-Watson process near criticality, where for
$0< \lambda < 1/(d-1)$, $\dE'_\lambda Z$ is of order  $(1  -  (d-1) \lambda  ) ^{-1}$. From Theorem \ref{th:RStail2}, we may fill the gap in Theorem \ref{th:RSstab} in the specific case of a realization of a Galton-Watson tree.
\begin{corollary}
\label{cor:N}
Let $T$ be a Galton-Watson tree with mean number of offsprings $ d$. Then a.s. $q_T (\lambda_1) =1$.
\end{corollary}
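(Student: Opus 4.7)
The plan is to derive the corollary as a direct consequence of Theorem~\ref{th:RStail2} at the critical intensity $\lambda = \lambda_1$. The heavy lifting has already been done in proving Theorem~\ref{th:RStail2} for the full range $0 < \lambda \leq \lambda_1$; what is left is a short annealed-to-quenched argument plus a connectivity sanity check.

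First, I would evaluate Theorem~\ref{th:RStail2} at $\lambda = \lambda_1$. By the very definition of $\lambda_1$ the discriminant $\Delta = \lambda_1^2 - 2\lambda_1 (2d-1) + 1$ vanishes, so the formula yields
\[
\dE'_{\lambda_1}[Z] \; = \; \frac{2d}{(d-1)(1+\lambda_1)} - \frac{1}{d-1} \; < \; \infty .
\]
In particular $Z$ is almost surely finite under the annealed measure $\dP'_{\lambda_1}$.

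Second, I would check that on the tree $T^\downarrow$ with our initial condition, $\{Z < \infty\}$ coincides with the event of extinction. One direction is immediate: extinction at a finite time makes $Z$ finite. For the converse, observe that the R-set is non-decreasing, remains connected, and always contains $o$; moreover, whenever an I-vertex $v$ exists, following its ancestors up to $o$ one must cross the boundary from I to R, so some I-vertex is adjacent to R. Hence the presence of any I-vertex forces a strictly positive I-to-R transition rate. If $Z < \infty$, the R-set stabilizes in finite time, which then forces the I-set to be empty from that time on, i.e.\ extinction. Combined with the first step, this gives $q(\lambda_1) = \dP'_{\lambda_1}(X \text{ gets extinct}) = 1$.

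Finally, since $q(\lambda_1) = \dE'[q_T(\lambda_1)]$ and $q_T(\lambda_1) \in [0,1]$, equality to $1$ of the expectation forces $q_T(\lambda_1) = 1$ for $\dE'$-almost every realization of the Galton--Watson tree $T$. The only subtle point is that Theorem~\ref{th:RSstab} itself is silent at the critical $\lambda_1$; the input that closes that gap is precisely the critical-case validity of the moment formula in Theorem~\ref{th:RStail2}, while the connectivity observation in step two is a light structural check specific to trees.
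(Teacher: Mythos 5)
Your proposal is correct and follows essentially the same route as the paper, which deduces the corollary directly from the finiteness of $\dE'_{\lambda_1}[Z]$ given by Theorem~\ref{th:RStail2} (where $\Delta=0$ at $\lambda=\lambda_1$), the identification of $\{Z<\infty\}$ with extinction on the tree, and the observation that $q(\lambda_1)=\dE'[q_T(\lambda_1)]=1$ with $q_T(\lambda_1)\in[0,1]$ forces $q_T(\lambda_1)=1$ almost surely. Your connectivity check in step two is exactly the (implicit) structural fact the paper relies on, so there is nothing to add.
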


The method of proofs of Theorems \ref{th:RStail}-\ref{th:RStail2} will be parallel to arguments in \cite{MR2453554} on the birth-and-assassination process.

\paragraph{The birth-and-assassination process}

We now turn to the BA process. It is a scaling limit in $ d \to \infty$ of the chase-escape  process  on the $d$-ary tree when $\lambda$ is rescaled in $\lambda/ d$.

Informally, the process can be described as follows. We start from a root vertex that produces offsprings according to a Poisson process of rate
\(\lambda.\) Each offspring in turn produces children according to independent Poisson processes and
so on. The children of the root are said to belong to the first generation and
their children to the second generation and so forth. Independently, the root vertex is \emph{at risk} at time $0$ and dies after a
random time \(D_\so\) that is exponentially distributed with mean $1$. Its offsprings become at risk
after time \(D_\so\) and the process continues in the next generations. We now make precise the above description.

As above, $\dN^f = \cup_{k=0} ^{\infty} \dN^k$ denotes the set of finite \(k-\)tuples
of positive integers (with $N^0 =\so$). Elements from this set are used to index
the offspring in the BA process. Let $\{\Xi_v\},
v \in \dN^f$, be a family of independent Poisson processes with
common arrival rate $\lambda$; these will be used to define the offsprings.
Let $\{D_v\}, v \in \dN^f$, be a
family of independent, identically distributed (iid) exponential random variables
with mean $1;$ we use them to assign the lifetime for the appropriate offspring.
The families $\{ \Xi_v \}$ and $\{ D_v \}$ are independent. The process starts at
time $0$ with only the root, indexed by $\so$.
This produces offspring at the arrival times determined by
$\Xi_\so$ that enter the system with indices $(1)$,
$(2)$, $\cdots$ according to their birth order. Each new vertex
$v,$  immediately begins producing offspring
determined by the arrival times of $\Xi_v.$ The offspring of $v$ are
indexed $(v, 1)$, $(v, 2)$, $\cdots$ also according to birth
order. The root is {\em at risk} at time $0$. It
continues to produce offspring until time $T_\so =
D_\so$, when it dies. Let $k>0$ and let
$v=(n_1,\cdots,n_{k-1},n_k)$, $v'=(n_1,...,n_{k-1})$ denote a vertex and its genitor. When a
particle $v'$ dies (at time $T_{v'}$), the particle $v$ then becomes at
risk; it in turn continues to produce offspring until time $T_v =
T_{v'} + D_v$, when it dies (see figure \ref{fig:ba2}).

The BA process can be equivalently described as a Markov process $X(t)$ on $\{ S, I , R\}^{\dN^f}$, where a particle/vertex in state $S$ is not yet born, a particle in state $I$ is alive and a particle in state $R$ is dead. A particle is at risk if it is in state $I$ and its genitor is in state $R$. We use the same notation as above : under $\dP_\lambda$, the process $X(t)$ has infection rate $\lambda > 0$, $q(\lambda)$ is the probability of extinction and so on.

\begin{figure}[htb]
\begin{center}
\includegraphics[angle=0,height = 6cm]{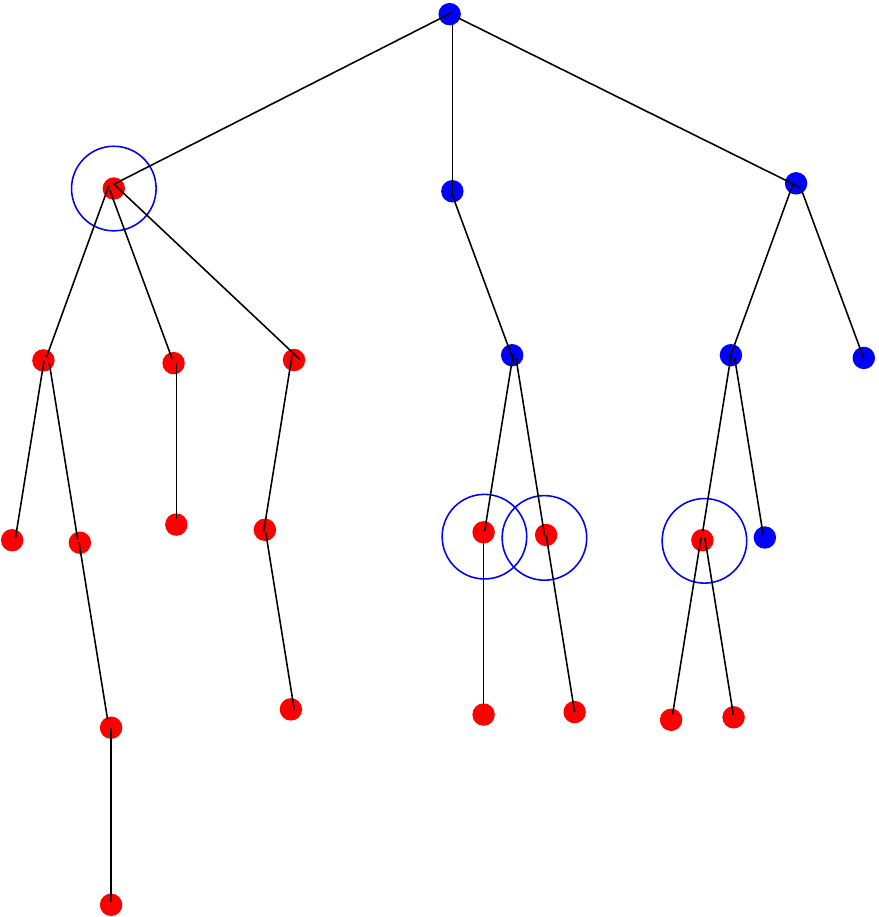}
\caption{Illustration of the birth-and-assassination process, living particles are in red, dead particles in blue, particles at risk are encircled.}\label{fig:ba2}
\end{center}
\end{figure}

The following result from \cite{aldouskrebs}  describes the phase transition on the probability of survival as a function
of \(\lambda\).
\begin{theorem}\label{aldous_thm}
Consider the BA process with rate $\lambda > 0$. If \(\lambda \in [0, 1/4],\) then  $q(\lambda ) = 1$, while if \(\lambda > \frac{1}{4},\)
$0< q(\lambda) < 1.$
\end{theorem}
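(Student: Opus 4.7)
I treat the two regimes separately. For $\lambda \le 1/4$ I set up an integral recursion for the conditional mean total progeny and exhibit an exponential super-solution. For $\lambda > 1/4$ I realise the chase-escape on a large $d$-ary tree as a thinned sub-process of the BA process and appeal to Theorem~\ref{th:RSstab}.

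\textbf{Subcritical regime.} Let $M_\infty$ denote the total number of particles ever born and $L_v$ the total lifetime of particle $v$ (birth to death). Conditionally on $L_\so = \ell$ the root has $N \sim \mathrm{Poisson}(\lambda\ell)$ children at i.i.d.\ uniform birth times in $[0,\ell]$; a child born at time $s$ has lifetime $\ell - s + D$ with $D \sim \mathrm{Exp}(1)$ independent, so each child has lifetime distributed as $U\ell + D$ with $U \sim \mathrm{Unif}(0,1)$, and the sub-trees of the children are, conditionally on their lifetimes, independent BA processes with the prescribed root lifetimes. Writing $f(\ell) := \dE_\lambda[M_\infty \mid L_\so = \ell]$, the tower property yields
\[
f(\ell) \;=\; 1 + \lambda \ell\, \dE[f(U\ell + D)].
\]
The ansatz $f(\ell) = e^{a\ell}$ with $a \in (0,1)$ reduces the right-hand side to $1 + \lambda(e^{a\ell}-1)/(a(1-a))$, which equals $e^{a\ell}$ iff $a(1-a) = \lambda$; for $\lambda \le 1/4$ this admits the real root $a_- = (1 - \sqrt{1-4\lambda})/2 \in [0, 1/2]$. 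The truncations $f_n(\ell) := \dE_\lambda[\sum_{k=0}^n M_k \mid L_\so = \ell]$ are polynomials in $\ell$, increase pointwise to $f$, and satisfy $f_{n+1}(\ell) = 1 + \lambda\ell\,\dE[f_n(U\ell + D)]$. Induction on $n$, re-using the ansatz computation at each step, gives $f_n(\ell) \le e^{a_- \ell}$; hence $f(\ell) \le e^{a_-\ell}$ and $\dE_\lambda[M_\infty] = \dE[f(D_\so)] \le 1/(1 - a_-) < \infty$. Therefore $M_\infty < \infty$ almost surely and $q(\lambda) = 1$.

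\textbf{Supercritical regime.} Fix $\lambda > 1/4$. I couple the BA process at rate $\lambda$ with the chase-escape on the $d$-ary tree $T_d$ at infection rate $\lambda/d$ so that the latter embeds inside the former. Attach to each BA particle's rate-$\lambda$ Poisson stream of children independent uniform labels in $\{1,\dots,d\}$, and retain the first child carrying each label; recurse inside the retained sub-trees. By Poisson thinning, in each slot of each particle a retained child is born at rate $\lambda/d$ throughout the parent's lifetime, and the recovery mechanism --- the child dies an independent $\mathrm{Exp}(1)$ time after its parent --- is identical to the chase-escape $\mathrm{I} \to \mathrm{R}$ rule. The retained sub-process is thus distributed as chase-escape on $T_d$ at rate $\lambda/d$, and its survival forces the BA process to survive. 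By \eqref{eq:asymlambda}, $d\,\lambda_1(d) \to 1/4$, so for $d$ large we have $\lambda/d > \lambda_1(d)$; since $T_d$ is trivially lower $d$-ary, Theorem~\ref{th:RSstab} yields $q_{T_d}(\lambda/d) < 1$, whence $q(\lambda) < 1$.

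\textbf{Main obstacle.} The delicate step is the bookkeeping of the supercritical coupling: one must verify that the retained vertices, with their inherited labels, birth times and death times, genuinely realise a chase-escape on $T_d$ --- in particular, that the events ``first child of $u$ with label $i$ born before $u$ dies'' reproduce jointly the chase-escape infection and recovery laws at every vertex. Once this is granted, both halves of the theorem are immediate, and the subcritical analysis is structurally forced by the eigenvalue relation $a(1-a) = \lambda$ dictated by the recursion.
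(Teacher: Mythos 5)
Your proposal is correct, and it is a genuinely different route from the paper's treatment: the paper does not prove Theorem \ref{aldous_thm} at all, but imports the dichotomy from \cite{aldouskrebs} and the critical case $\lambda=1/4$ from \cite{MR2453554}, whereas your argument is self-contained (given Theorem \ref{th:RSstab}). Your subcritical half is a first-moment computation: the branching decomposition at the root (the same one the paper uses in Section \ref{sec:BA} to derive the equation for $Q_\lambda$) gives the recursion $f(\ell)=1+\lambda\ell\,\dE f(U\ell+D)$, the exponential ansatz closes exactly when $a(1-a)=\lambda$, and the monotone iteration $f_{n+1}=\Phi(f_n)$ dominated by $e^{a_-\ell}$ is airtight. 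Notably it covers the critical case directly: at $\lambda=1/4$ one gets $a_-=1/2$ and $\dE_\lambda M_\infty\le 2$, which is consistent with the finiteness-at-criticality phenomenon behind Theorem \ref{th:RStail2} and is considerably shorter than the historical route, where $\lambda=1/4$ required a separate argument. Your supercritical half inverts the usual limit relationship: instead of treating BA as the $d\to\infty$ limit of chase-escape, you embed chase-escape on the $d$-ary tree at rate $\lambda/d$ inside BA by label-thinning each particle's offspring stream. The bookkeeping you flag is routine: thinning a rate-$\lambda$ stream by independent uniform labels gives $d$ independent rate-$\lambda/d$ streams per parent, so the first label-$i$ birth competes with the parent's death exactly as the $\EXP(\lambda/d)$ infection clock competes with the parent's recovery, and the retained child's death is $\EXP(1)$ after the parent's death exactly as in the chase-escape recovery rule; independence across retained vertices comes from the independence of the families $\{\Xi_v\},\{D_v\}$. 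Since $d\,\lambda_1(d)\to 1/4$ by \eqref{eq:asymlambda}, Theorem \ref{th:RSstab} applies for $d$ large, and this is not circular because Section \ref{pf1} nowhere uses Theorem \ref{aldous_thm}.

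Two small omissions to repair. First, the statement also asserts $q(\lambda)>0$ for $\lambda>1/4$, which you never address; it is one line: extinction occurs if the root produces no offspring before dying, an event of probability $\dE\, e^{-\lambda D_\so}=1/(1+\lambda)>0$. Second, in the subcritical half you should justify that $M_\infty<\infty$ a.s.\ implies extinction: each particle's death time is a finite sum of independent exponentials along its ancestry, hence a.s.\ finite, so on $\{M_\infty<\infty\}$ there is no living particle after the maximum of finitely many finite death times.
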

The critical case $\lambda = 1/4$ was established in \cite{MR2453554}. Note also that the threshold $\lambda = 1/4$ is consistent with \eqref{eq:asymlambda}.

Our final result is the analog of Theorem \ref{th:RSext}.
\begin{theorem}\label{th:mainex}
Consider the BA process and assume that $\lambda > 1/4$.  There exist constants $c_0, c_1 > 0$ such that for all $1/4  < \lambda < 1 $,
$$
c_0 \omega^3  e^{  - \frac{ \pi }{2 }  \omega^{-1}  }   \leq  1- q(\lambda) \leq  c_1  \omega ^{-1}   e^{  - \frac{ \pi }{2 } \omega^{-1}  },
$$
with
$$
\omega = \sqrt{\lambda - \frac 1 4  }.
$$
\end{theorem}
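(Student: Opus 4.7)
The plan is to reduce the analysis to a second-order nonlinear ODE for the extinction probability and compare it with its linearization, following the Brunet--Derrida strategy used for Theorem \ref{th:RSext} (and in \cite{MR1473413,MR2793860,MR2774095}). The BA setting is cleaner because there is no tree generating function to keep track of. I first derive the ODE. For $s \geq 0$ let $f(s)$ be the extinction probability of the BA subtree rooted at a vertex whose parent will die a time $s$ after its birth. Conditioning on the vertex's extra lifetime $D_v \sim \mathrm{Exp}(1)$ and on its Poisson$(\lambda)$ process of children, and using independence of distinct subtrees,
\[
f(s) = \int_0^\infty e^{-d}\exp\!\left(-\lambda\int_0^{s+d}(1-f(v))\,dv\right)dd.
\]
Setting $r(s) = \int_0^s(1-f(v))\,dv$ and differentiating twice produces the boundary value problem
\[
r''(s) = r'(s) - 1 + e^{-\lambda r(s)},\qquad r(0) = 0,\quad r'(\infty) = 1,
\]
with $\varepsilon := 1-q(\lambda) = r'(0)$. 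The linearized operator $L u := u'' - u' + \lambda u$ has, for $\lambda>1/4$, characteristic roots $\tfrac12\pm i\omega$ with $\omega=\sqrt{\lambda-\tfrac14}$; the fundamental solution satisfying $K(0)=0,\,K'(0)=1$ is $K(s)=\omega^{-1}e^{s/2}\sin(\omega s)$, which is nonnegative on $[0,\pi/\omega]$ and peaks around $s_* = \pi/\omega - 2$ at a value $\Theta(e^{\pi/(2\omega)})$.

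\medskip

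For the upper bound, I use the inequality $e^{-x}\geq 1-x$ for $x\geq 0$, which translates into $L r \geq 0$. Combined with the Duhamel identity
\[
r(s) = \varepsilon K(s) + \int_0^s K(s-u)\,(Lr)(u)\,du
\]
and the nonnegativity of $K$ on $[0,\pi/\omega]$, this yields $r(s)\geq \varepsilon K(s)$ throughout this interval. On the other hand, $r'\leq 1$ (because $f\geq 0$), so $r(s)\leq s$. Evaluating both inequalities at $s=s_*$ and solving for $\varepsilon$ produces
\[
\varepsilon \leq c_1\,\omega^{-1}\,e^{-\pi/(2\omega)}.
\]

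\medskip

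For the lower bound, the opposite Taylor estimate $e^{-x}\leq 1-x+x^2/2$ gives the companion inequality $L r \leq (\lambda^2/2) r^2$. Iterating the Duhamel formula with this quadratic upper bound gives $r(s) = \varepsilon K(s)\bigl(1 + O(\varepsilon/\omega)\bigr)$ in the regime $r\ll 1$. For the trajectory to avoid being dragged back to $r\approx 0$ by the next half-oscillation of $K$ (which would force $r'<0$, contradicting $f\leq 1$), the nonlinear correction must be of order one by the time $s\asymp \pi/\omega$; integrating $r^2$ against the kernel $K(s-u)$ on the matching interval of length $\asymp 1/\omega$ and amplitude $\asymp \varepsilon e^{s/2}$ yields the lower bound
\[
\varepsilon \geq c_0\,\omega^3\,e^{-\pi/(2\omega)}.
\]
The exponent $\pi/2$ is dictated by the product of the real part $\tfrac12$ of the characteristic roots and the half-period $\pi/\omega$ of the oscillation, and falls out cleanly from the Duhamel comparisons. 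The main obstacle is the bookkeeping of polynomial prefactors through the matching between the linear oscillatory regime $r\ll 1$ and the nonlinear saturation regime $r=\Theta(1)$; this is exactly what produces the $\omega^{-4}$ asymmetry between the upper and lower bounds, and closing the gap would require a considerably more refined WKB-type analysis that I do not attempt.
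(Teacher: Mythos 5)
Your reduction is sound and genuinely different in technique from the paper's. Your unknown is just a rescaling of the paper's: with $x(t)=-\ln Q_\lambda(t)$ one has $r=x/\lambda$, and your boundary value problem $r''=r'-1+e^{-\lambda r}$, $r(0)=0$, is the paper's equation \eqref{eq:odex}. Your identity $\varepsilon:=1-q(\lambda)=r'(0)$ is correct (it is the statement $x'(0)=\lambda\int_0^\infty(1-e^{-x(s)})e^{-s}ds=\lambda(1-q)$) and it short-circuits the paper's ``step three'', where bounds on $x'(0)$ are converted into bounds on $1-q$ by a Gronwall-type comparison. Where the paper bounds $x'(0)$ by phase-plane comparison with detuned linear equations ($y''-y'+\ell y=0$, $\ell=\frac14+\kappa^2\omega^2$, via Lemmas \ref{le:trajODE} and \ref{le:compode}), you work with the Duhamel representation $r(s)=\varepsilon K(s)+\int_0^sK(s-u)(Lr)(u)\,du$, $K(s)=\omega^{-1}e^{s/2}\sin(\omega s)$, together with the two-sided bound $0\le Lr\le \frac{\lambda^2}{2}r^2$ and the hard constraints $0\le r'\le 1$. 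Your upper bound is complete as written: $Lr\ge0$ and $K\ge0$ on $[0,\pi/\omega]$ give $r\ge\varepsilon K$ there, while $r(s)\le s$; evaluating at the peak $s_*$, where $K(s_*)=\Theta(e^{\pi/(2\omega)})$, yields $\varepsilon\le c_1\omega^{-1}e^{-\pi/(2\omega)}$, which is the stated upper bound.

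The lower bound, however, is only heuristic as written, and one intermediate claim is wrong: $r(s)=\varepsilon K(s)\bigl(1+O(\varepsilon/\omega)\bigr)$ cannot hold up to $s\asymp\pi/\omega$ — at the peak the relative size of the nonlinear Duhamel term is of order $\varepsilon\omega^{-3}e^{\pi/(2\omega)}$, not $\varepsilon/\omega$, and near the zero of $K$ at $s=\pi/\omega$ any multiplicative bound fails since $r$ is nondecreasing and positive; likewise the ``length $\asymp1/\omega$, amplitude $\asymp\varepsilon e^{s/2}$'' bookkeeping does not by itself produce $\omega^3$. What closes the argument is an \emph{additive} a priori bound obtained by a bootstrap from the same Duhamel formula: assuming (for a dichotomy) $\varepsilon\le\omega^3e^{-\pi/(2\omega)}$, setting $M(s)=\sup_{u\le s}r(u)e^{-u/2}$ and using $K(v)\le\omega^{-1}e^{v/2}$ gives $M\le\varepsilon\omega^{-1}+\lambda^2\omega^{-1}e^{\pi/(2\omega)}M^2$ on $[0,\pi/\omega]$, hence $r(u)\le2\varepsilon\omega^{-1}e^{u/2}$ there. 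Then differentiate Duhamel, $r'(s)=\varepsilon K'(s)+\int_0^sK'(s-u)(Lr)(u)\,du$; at $s=\pi/\omega$ one has $K'(\pi/\omega)=-e^{\pi/(2\omega)}$ and $|K'(v)|\le\omega^{-1}e^{v/2}$ for small $\omega$, so the integral is at most $4\lambda^2\varepsilon^2\omega^{-3}e^{\pi/\omega}$, and the constraint $r'(\pi/\omega)\ge0$ forces $\varepsilon\ge\frac{1}{4\lambda^2}\,\omega^3e^{-\pi/(2\omega)}$ — provided $\varepsilon>0$, which you must import from Theorem \ref{aldous_thm} in order to divide by $\varepsilon$. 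So the mechanism you identify (the downward swing of the linearized solution must be cancelled by the quadratic term controlled through $Lr\le\frac{\lambda^2}{2}r^2$) is the right one and does give exactly the claimed $\omega^3e^{-\pi/(2\omega)}$, but the proposal omits the bootstrap estimate and the precise contradiction inequality, which are the actual content of the lower bound.
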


Note that the analog of Theorems \ref{th:RStail}-\ref{th:RStail2} was already performed in \cite{MR2453554}. The remainder of the paper is organized as follows. In section \ref{sec:BA}, we start with the study of the BA process and prove Theorem \ref{th:mainex}. Proofs on the BA process are simpler and this section is independent of the rest of the paper. We then come back to the chase escape process: in section \ref{pf1}, we prove  Theorem~\ref{th:RSstab}, in section \ref{sec:RSext}, we prove Theorem \ref{th:RSext}. Finally, in section \ref{sec:RStail}, we prove Theorems \ref{th:RStail}-\ref{th:RStail2}.

\section{Proof of Theorem \ref{th:mainex}}
\label{sec:BA}

\subsection{Differential equation for the survival probability}

We first determine the differential equation associated to the probability of extinction for the BA process.
Define $Q_\lambda (t)$ to be the extinction probability given that the root dies at time $t  > 0$ so that
\begin{equation}\label{eq:qQ}
q(\lambda) = \int_0 ^\infty Q_{\lambda} (t) e^{-t} dt
\end{equation}
and $Q_\lambda (0) =1$. Let $\{\xi_i\}_{i \geq 1}$ be the arrival
times of $\Xi_{\so}$ with $0 \leq \xi_1 \leq \xi_2 \leq \cdots$. For
integer $i$ with $1 \leq \xi_i \leq D_{\so}$, we define $\cB_i$ as the subprocess on
particles $i \dN^f$ with ancestors $i$. For the process $\cB$ to get extinct, all
the processes $\cB_i$ must get extinct. Conditioned on $\Xi_{\so}$, and on the root to
die at time $D_{\so} = t$, the evolutions of the $(\cB_i)$ then become independent. Moreover, on this
conditioning, $\cB_i$ is a birth-and-assassination
process conditioned on their root to be at risk
at time $t - \xi_i$. Hence, we get
$$
Q_\lambda ( t ) = \dE_\lambda \SBRA{ \prod_{\xi_i \leq  t} Q_\lambda ( t - \xi_i + D_i )} = \dE_\lambda \SBRA{ \prod_{\xi_i \leq  t} Q_\lambda (  \xi_i + D_i )},
$$
where $\{\xi_i\}_{i \geq 1}$ is a Poisson point process of
intensity $\lambda$ and $(D_i), i \geq 1,$ independent exponential variables with parameter $1$. Using
L\'evy-Khinchin formula, we deduce
\begin{eqnarray*}
Q_\lambda(t)  &=&
\exp \PAR{ \lambda \int_0 ^t  (  \dE Q_\lambda ( x + D_1)   - 1 ) dx }  \nonumber\\
&=& \exp \PAR{ \lambda \int_0 ^t \int_0 ^\infty(  Q_\lambda ( x + s)  - 1 ) e^{-s} ds dx }.
\end{eqnarray*}
So finally, for any $t \geq 0$,
\begin{eqnarray*}
Q_\lambda(t) &  =  & \exp \PAR{- \lambda t  + \lambda \int_0 ^t e^{x} \int_x ^\infty Q_\lambda ( s) e^{-s} ds dx}.
\end{eqnarray*}
We perform the change of variable
\begin{equation}\label{eq:defx}
x (t)  =  - \ln Q_\lambda (t).
\end{equation}
We find that for any $t \geq 0$,
\begin{eqnarray}\label{eq:fpQlambda}
x(t) =  \int_0 ^t e^{x} \int_x ^\infty \varphi( x (s) )  e^{-s} ds dx,
\end{eqnarray}
where 
$$
\varphi(y) = \lambda(1-e^{-y}).
$$
Differentiating \eqref{eq:fpQlambda} once gives
\begin{equation}\label{eq:xprime}
x' (t) = e^{t}  \int_t ^\infty \varphi( x(s))  e^{-s} ds,
\end{equation}
Now, multiplying the above expression by $e^{-t}$ and differentiating once again, we find that \(x(t)\)
satisfies the differential equation
\begin{equation}\label{eq:odex}
x'' - x ' + \varphi(x) = 0.
\end{equation}
This non-linear ordinary differential equation is not a priori easy to solve. However, in the neighborhood of \(\lambda =1/4\) it is possible to obtain an asymptotic expansion as explained below. The idea will be to linearize the ODE near $(x(0),x'(0)) = (0,0)$ and look at the long time behavior of the solutions of the linearized ODE. The critical value $\lambda  = 1/4$ appears to be the threshold for oscillating solutions of the linearized ODE. From a priori knowledge on the long time behavior of the solution of \eqref{eq:odex} of interest (studied in \S\ref{subsec:afpeq}), we will obtain an asymptotic equivalent for $(x(0),x'(0))$ as $\lambda \downarrow 1/4$ (in \S \ref{subsec:prfmainex}). 

%First, from \eqref{eq:fpQlambda}, we deduce that $Q(t) = Q_\lambda (t) $ is strictly positive for all $t \geq 0$.

\subsection{A fixed point equation}
\label{subsec:afpeq}
Let $\cH$ be the set of measurable
functions $f : \dR_+ \to \dR_+$ such that $f(0) = 0$ and for any $a >0$,
$$\lim_{s \to \infty} e^ { - a s }f(s)  = 0.$$
We define the map $A : \cH \to \cH$ defined by
\begin{equation}\label{eq:defT}
A (y) (t) = \int_0 ^t e^{x} \int_x ^\infty \varphi( y (s) )  e^{-s} ds dx.
\end{equation}
Using $\| \varphi \|_\infty = \lambda < \infty$, it is straightforward to check
that $A(y)$ is indeed an element of $\cH$ ($A(y)(t)$ it is
bounded by $\|\varphi \|_\infty t$). Note also that $y \equiv 0$ is a solution
of the fixed point equation
$$
y = A (y).
$$
Consider the function  $x$ defined by \eqref{eq:defx}. Using \eqref{eq:fpQlambda} we find that $x \in \cH$ and satisfies also the fixed point $x = A (x)$. If $\lambda > 1/4$, we know from Theorem \ref{aldous_thm} that $x \not\equiv 0$.

In the sequel, we are going to study  any non-trivial fixed point of $A$. To this end, let $x \in \cH$ such that $x = A( x)$ and $x \not\equiv 0$. By induction, it
follows easily that $t \mapsto x(t)$ is twice continuously differentiable. In particular, since
$x(s) \geq 0$, $x'(t) \geq 0$ and the function $x :\dR_+ \to \dR_+$ is non-decreasing. Moreover,
by assumption there exists $t_0 > 0$ such that $x(t_0) > 0$. Since $x$ is non-decreasing, we deduce
that $x(t) > 0$ for all $t > t_0$. Then, using again \eqref{eq:xprime}, we find that for all $t \geq 0$,
\begin{equation}\label{eq:Uyprime}
0 < x'(t) <  \lambda.
\end{equation}
From the argument leading to \eqref{eq:odex}, $x$ satisfies \eqref{eq:odex} and we are looking for a specific non-negative solution of \eqref{eq:odex} which satisfies
$x(0) = 0$.
To characterize completely this solution, it would be enough to compute $x'(0)$ (which is necessary positive since $x(0) =x'(0) = 0$ corresponds to the trivial solution $x \equiv 0$). We first give some basic properties of the phase portrait, see figure \ref{fig:ppBA}.  We define $X(t) = (x (t) , x'(t))$ so that
\begin{equation}\label{eq:defFX}
X ' = F (X)
\end{equation}
with $F((x_1, x_2) ) = ( x_2, x_2- \varphi(x_1))$. We also introduce the set
$$
 \Delta = \{ (x_1, x_2) \in \dR_+^2 : \varphi (x_1) < x_2  <  \lambda \}.
$$
\begin{figure}
\centering \scalebox{0.6}{\input{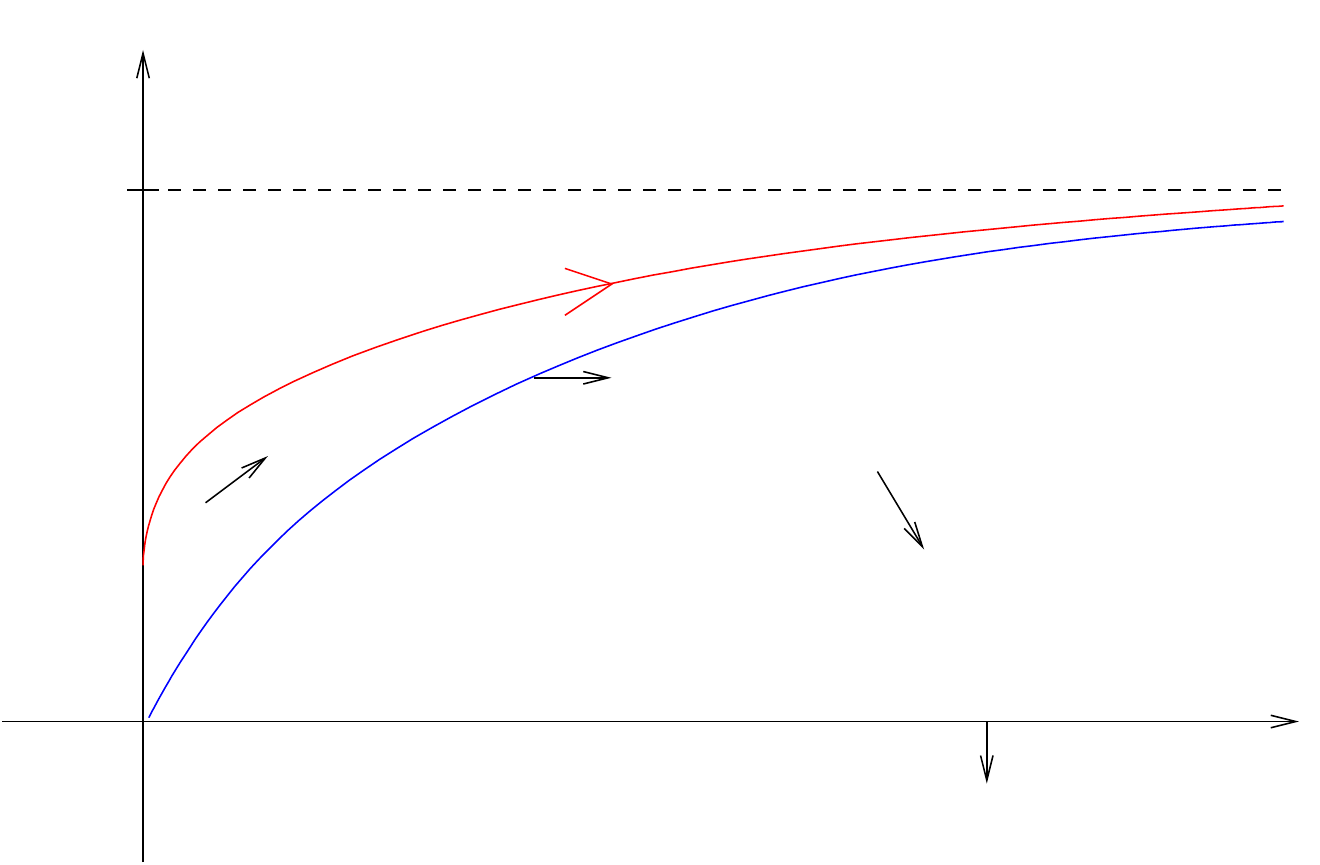_t}}
\caption{Illustration of the phase portrait. In blue, the curve $L$, in red the curve $\Phi$ of Lemma \ref{le:trajODE}.}\label{fig:ppBA}
\end{figure}
\begin{lemma}\label{le:trajODE}
Let $x \in \cH$ such that  $x = A(x)$ and $x \not\equiv 0$. Then $x'(0) > 0$, $x$ satisfies \eqref{eq:odex} and for all $t \geq 0$,
\begin{equation}\label{eq:Xbox}
X ( t ) \in  \, \Delta.\end{equation}
Moreover $$\lim_{t \to \infty} x'(t) =  \lambda. $$
\end{lemma}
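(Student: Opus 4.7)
The plan is to bootstrap from the integral representation
\begin{equation*}
x'(t) = e^t \int_t^\infty \varphi(x(s)) e^{-s}\, ds = \int_0^\infty \varphi(x(t+u)) e^{-u}\, du,
\end{equation*}
combined with the monotonicity facts already established in the paragraph preceding the lemma, namely that $x$ is $C^2$, non-negative, non-decreasing, satisfies \eqref{eq:odex}, and has $0 < x'(t) < \lambda$ for all $t \geq 0$. The ODE and the upper bound $x'(t) < \lambda$ parts of the conclusion are already in hand; only $x'(0) > 0$, the lower bound $\varphi(x(t)) < x'(t)$, and the limit still need to be proved.

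For $x'(0) > 0$, since $x \not\equiv 0$ and $x$ is continuous there is an open interval on which $x > 0$, whence $\varphi(x) > 0$ there, and evaluating the integral form at $t=0$ yields $x'(0) = \int_0^\infty \varphi(x(s)) e^{-s}\, ds > 0$. By monotonicity $x(t) > 0$ for every $t > 0$. For the lower bound $\varphi(x(t)) < x'(t)$, the non-strict version $\varphi(x(t)) \leq x'(t)$ drops out of the second form of the integral representation because $\varphi \circ x$ is non-decreasing and $\int_0^\infty e^{-u}\, du = 1$. If equality held at some $t_0 \geq 0$, strict monotonicity of $\varphi$ on $\dR_+$ together with continuity of $x$ would force $x$ to be constant on $[t_0, \infty)$, and in particular $x'(t_0) = 0$. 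This contradicts $x'(0) > 0$ when $t_0 = 0$, and it contradicts $x(t_0) > 0$ (via $\varphi(x(t_0)) = x'(t_0) = 0$) when $t_0 > 0$.

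For the asymptotic $x'(t) \to \lambda$, the real task is to prove that $x$ is unbounded. Assuming for contradiction that $x$ is bounded, monotonicity gives a finite limit $L \geq x(t_0) > 0$, and dominated convergence applied to the second form of the integral representation yields $x'(t) \to \varphi(L) > 0$; integrating then forces $x(t) \to \infty$, a contradiction. Hence $x(t) \to \infty$, and the same dominated-convergence argument gives $x'(t) \to \int_0^\infty \lambda e^{-u}\, du = \lambda$. The only genuinely delicate point is the case split at $t=0$ versus $t > 0$ in the strict-inequality argument; everything else is routine manipulation of the integral form.
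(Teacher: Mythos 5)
Your proof is correct, but it follows a genuinely different route from the paper's. The paper establishes \eqref{eq:Xbox} via the phase portrait: it writes the trajectory $\{(x(t),x'(t)):t\ge 0\}$ as the graph of a function $f$ satisfying \eqref{eq:fflow}, assumes the trajectory meets the curve $L=\{(s,\varphi(s))\}$, shows that after such a crossing it must stay strictly below $L$ (since $f'<0$ there while $\varphi'>0$, no re-crossing is possible), and deduces $x''(t)<-\delta$ for all large $t$, which upon integration contradicts \eqref{eq:Uyprime}; the limit $x'(t)\to\lambda$ then follows because inside $\Delta$ one has $x''=x'-\varphi(x)>0$, so $x(t)\to\infty$, and \eqref{eq:xprime} applies. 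You instead exploit the rewritten representation $x'(t)=\int_0^\infty \varphi(x(t+u))e^{-u}\,du$: monotonicity of $x$ and of $\varphi$ gives $\varphi(x(t))\le x'(t)$ in one line, your rigidity argument for equality (the non-negative continuous integrand must vanish, so $x$ is constant on $[t_0,\infty)$, forcing $x'(t_0)=0$ and then $x(t_0)=0$, impossible both at $t_0=0$ and at $t_0>0$) upgrades this to the strict inequality needed for $\Delta$, and dominated convergence both rules out boundedness of $x$ (a bounded limit $L>0$ would give $x'(t)\to\varphi(L)>0$, hence $x\to\infty$) and yields $x'(t)\to\lambda$. This is more elementary and avoids the geometric no-recrossing argument entirely; what the paper's route buys is the phase-plane formalism (the graph $f$ and \eqref{eq:fflow}) which it reuses in Lemma \ref{le:compode}, but that is a matter of organization rather than necessity, and since your \eqref{eq:Xbox} together with \eqref{eq:odex} still gives $x''>0$, the downstream consequence that $x'$ is increasing (used in the proof of Theorem \ref{th:mainex}) is preserved. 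The only points worth stating explicitly in a write-up are the two facts you quote from the paragraph preceding the lemma, namely \eqref{eq:Uyprime} (which supplies both $x'(0)>0$ and the upper boundary $x'(t)<\lambda$ of $\Delta$) and that $x$ is non-decreasing; with those in hand your argument is complete.
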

\begin{proof}
We have already checked that $x$ satisfies \eqref{eq:odex} and $x'(0) >0$. Let us now prove that \eqref{eq:Xbox} holds. Define the trajectory $\Phi = \{ X(t) \in \dR_+^2 : t \geq 0 \}$. Since for all $t \geq 0$, $X(t)'_1 = F(X(t))_1 > 0$, $\Phi$ is the graph of  a differentiable function $ f : [0,S) \to \dR_+$ with $f(0) = x'(0) > 0$:
$$
\Phi = \{ (s,f(s)) : s \in [0,S) \},
$$
with $S  = \lim_{ t \to \infty} x(t) \in (0,\infty]$, see figure \ref{fig:ppBA}. Moreover
\begin{equation}\label{eq:fflow}
f'(s) = \frac{ F((s,f(s)))_2 }{ F((s,f(s)))_1} = \frac{ f(s) - \varphi(s) } {f(s)}.
\end{equation}
The graph of the function $\varphi$ is the curve $L = \{ (s, \varphi (s))  : s \in \dR_+ \} $ and the set $$\Delta' = \{ (x_1, x_2) \in [0,\infty)^2 : x_2  < \varphi (x_1)\}$$ is the set of points below $L$. Assume that \eqref{eq:Xbox} does not hold. Then by \eqref{eq:Uyprime} and the intermediate value Theorem, the curves $L$ and $\Phi$ intersect. Then the exists $s_0 > 0$ such that
$$
f(s_0) = \varphi(s_0).
$$
From \eqref{eq:fflow}, $f'(s_0) = 0$ while $\varphi'(s_0) > 0$. It follows that $(s,f(s)) \in \Delta'$ for all $s \in (s_0, s_1)$ for some $s_1 > s_0$. Since $f'(s) < 0$ if $(s,f(s))\in \Delta'$  while $\varphi'(s) > 0$, the curves $L$ and $\Phi$ cannot intersect again. We get that all $s > s_0$, $(s,f (s)) \in \Delta'$.

On the other hand, since $f'(s) < 0$, for all $s > s_1$, $f(s) < f(s_1) < \varphi(s_1)$. If $x(t_1) = s_1$ and $\delta = \varphi(s_1) - f(s_1) >0$, we deduce that  for all $t > t_1$ , $x''(t) = x'(t) - \varphi(x(t)) < - \delta$. Integrating, this implies that $\lim_{t \to \infty} x'(t) = -\infty$ which contradicts \eqref{eq:Uyprime}.

We have proved so far that for all $t \geq 0$, $X(t) \in \Delta$. This implies that $x'(t)$ is increasing. In particular  $\lim_{t \to \infty} x(t) = \infty$ and $S = \infty$. Since $\lim_{s \to \infty} \varphi(s) = \lambda$, by \eqref{eq:xprime}, we readily deduce that $x'(t)$ converges to $\lambda$ as $t \to \infty$. \qed\end{proof}

\subsection{Comparison of second order differential equations}

It is possible to compare the trajectories of solutions of second order ODE by using the phase diagram. 
Let $\cD$ be the set of increasing  Lipschitz-continuous functions $\psi$ on $\dR_+$ such that  $\psi(0) = 0$. For two functions $\psi_1, \psi_2$ in $\cD$,  we write $\psi_1 \leq \psi_2$ if for all $t \geq 0$, $\psi_1 ( t) \leq \psi_2 (t)$.

\begin{lemma}\label{le:compode}
Let  $x \in \cH$ such that  $x = A( x ) $ and $x \not\equiv 0$. Let $\psi \in \cD$ and $y$ be a solution of  $y'' - y' + \psi(y) =0$  with  $y (0) = 0$, $y'(0) >0$. We define the exit times
$$
T = \inf\{ t \geq 0 : (y(t) , y'(t) ) \notin \dR^2_+ \},
$$
$$T_+ = \inf\{ t \geq 0 : y'(t) \geq \lambda \} \quad  \hbox{ and } \quad T_- = \inf\{ t \geq 0 : \varphi (y(t)) \leq y'(t)   \}.$$
\begin{enumerate}[(i)]
\item\label{odei2}
If $T_+ < T $, $T_+ < \infty$  and $\varphi \leq \psi$ then $y'(0)  \geq x'(0)$.

\item\label{odeii2}
If $T_- <  T$, $T_- < \infty$ and $\psi \geq \varphi$ then $x'(0)  \geq y'(0)$.
\end{enumerate}
\end{lemma}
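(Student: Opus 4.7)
The approach is a phase-plane comparison: each second-order ODE is converted to a first-order ODE for the trajectory viewed as a graph $u_2 = h(u_1)$ in the phase plane, and the two first-order ODEs are compared via a monotonicity principle.

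By Lemma \ref{le:trajODE}, the $x$-trajectory is the graph of a strictly increasing function $f:[0,\infty)\to(0,\lambda)$ with $f(0)=x'(0)$ satisfying
\[
f'(s) = 1 - \frac{\varphi(s)}{f(s)}.
\]
On any subinterval of $[0,T]$ on which $y'>0$, the $y$-trajectory is similarly the graph of a function $g$ with $g(0) = y'(0)$ satisfying
\[
g'(s) = 1 - \frac{\psi(s)}{g(s)}.
\]
The key comparison is the following: since $\psi\ge\varphi$, for every $u>0$ one has $1-\psi(s)/u\le 1-\varphi(s)/u$, so the right-hand side of the $g$-ODE is dominated by that of the $f$-ODE. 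A standard first-order ODE argument (examine a hypothetical first crossing $s_*$ of $g$ and $f$; if the derivatives also coincide at $s_*$, replace $\psi$ by $\psi+\varepsilon$ to obtain strict comparison and then pass $\varepsilon\to 0$) gives: whenever $g(0)\le f(0)$ and both stay positive, $g\le f$ on their common domain, and conversely for the initial ordering reversed.

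\emph{Proof of (i).} Assume the hypotheses and suppose, for contradiction, $y'(0)<x'(0)$, i.e., $g(0)<f(0)$. We first verify $y'(t)>0$ on $[0,T_+)$ so that $g$ is defined on $[0,y(T_+)]$: if $y'(t_0)=0$ at some $t_0\in(0,T_+)$, then $y''(t_0)=-\psi(y(t_0))\le 0$, and the ODE forces $(y,y')$ to exit $\dR_+^2$ before $y'$ can rise to $\lambda$, contradicting $T_+<T$. By the comparison, $g\le f$ on $[0,y(T_+)]$; but $g(y(T_+))=y'(T_+)=\lambda$ whereas $f<\lambda$ everywhere by Lemma \ref{le:trajODE}, yielding the contradiction $\lambda\le f(y(T_+))<\lambda$.

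\emph{Proof of (ii).} The argument is the mirror image, with the upper boundary $\{u_2=\lambda\}$ of $\Delta$ replaced by the lower boundary $L=\{u_2=\varphi(u_1)\}$. Suppose for contradiction $y'(0)>x'(0)$, i.e., $g(0)>f(0)$. The symmetric first-order comparison (using the reversed inequality between the initial values together with the appropriate direction of the $\psi$--$\varphi$ comparison at the trajectory level) gives $g\ge f$ on $[0,y(T_-)]$. Since $f$ stays strictly above $L$ throughout by Lemma \ref{le:trajODE}, we obtain $g(y(T_-))\ge f(y(T_-))>\varphi(y(T_-))$, which contradicts the defining relation $g(y(T_-))=y'(T_-)=\varphi(y(T_-))$.

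\emph{Main obstacle.} The delicate step is the first-order ODE comparison in the weak case: at a would-be first crossing point the two right-hand sides may agree, so one cannot read off the comparison directly from slope information. The standard remedy is the $\varepsilon$-perturbation trick (perturb $\psi$ upward to make the comparison strict, conclude, and let $\varepsilon\to0$). Once that point is settled, the remaining work is careful bookkeeping in the phase portrait to identify the correct boundary of $\Delta$ across which the contradiction is obtained.
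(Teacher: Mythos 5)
Your proof takes essentially the same route as the paper: reduce both second-order equations to first-order ODEs for the phase-plane graphs $f$ and $g$ (with $f$ controlled by Lemma \ref{le:trajODE} and \eqref{eq:Uyprime}), compare the two graphs, and extract a contradiction from $f<\lambda$ (resp. $f>\varphi$). Two remarks. First, the "main obstacle" you isolate is not really there: at a putative first crossing one has $g<f$ strictly on the interval before it, and since $\varphi(s)>0$ for $s>0$ this already gives the strict slope inequality $1-\psi(s)/g(s)\le 1-\varphi(s)/g(s)<1-\varphi(s)/f(s)$, so integrating up to the crossing contradicts $g(0)<f(0)$ directly — no $\varepsilon$-perturbation is needed; this is exactly the paper's argument. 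Second, in (ii) you only gesture at "the appropriate direction of the $\psi$–$\varphi$ comparison": to conclude $g\ge f$ from $g(0)>f(0)$ you need $\psi\le\varphi$ (so that $g'\ge 1-\varphi(s)/g(s)$), which is how the lemma is actually applied in Lemma \ref{le:LBx} via \eqref{eq:phiUB}; the inequality "$\psi\ge\varphi$" printed in (ii), like the inequality in the definition of $T_-$ (which as literally written forces $T_-=0$), is a typo, and your write-up should state the direction it uses explicitly rather than leave it implicit.
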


%The proof of Lemma \ref{le:compode} is in all aspects similar to the proof of Lemma \ref{le:compode0}. We omit it.
\begin{proof}
Let us start with the hypothesis of \eqref{odei2}. The proof is by contradiction : we also assume that $y'(0)  <  x'(0)$. We set $Y(t) = (y(t), y'(t))$ and $G (y_1, y_2) = ( y_2 , y_2 - \psi (y_1))$. Define the trajectories $\Phi = \{ X(t) \in \dR_+^2 : t \geq 0 \}$, and for $\tau > 0$,
$\Psi(\tau) = \{ Y(t) \in \dR_+^2 : 0 \leq t \leq \tau \}$. By Lemma \ref{le:trajODE}, $\Phi$ is the graph of an increasing function $ f : \dR_+ \to \dR_+$ with $f(0) = x'(0) > 0$ and
$$
\Phi = \{ (s,f(s)) : s \in \dR_+ \}.
$$
Similarly, if $ t \in [0,T)$, $y'(t) > 0$. Thus, there exists a differentiable function $g :   [0 , y(T)] \to \dR_+$ such that
$$
\Psi (T) = \{ (s,g(s)) : s \in [0 , y(T)] \},
$$
with
$$
g'(s)  =  1 - \frac{ \psi(s) } {g(s)} .
$$
Now, the assumption $0 < y'(0) < x'(0)$ reads $0 < g(0) < f(0)$. Since $T_+ < T$, for $s \in [0,T_+]$, $g(s) > 0$ and there is a time $s_0 > 0$ such that $g(s_0)  \geq  \lambda$. In particular, by \eqref{eq:Uyprime}, $ f(s_0) < g(s_0)$. Hence, by the intermediate value Theorem, there exists a first time $0 < s_1 < s_0$ such that the curves intersect: $g(s_1)  = f(s_1)$ and $g(s)  <  f(s)$ on $[0, s_1)$. However, it follows from \eqref{eq:fflow} and  $\varphi \leq \psi$ that for $ s \in [0, s_1)$,
$$
g'(s) = 1 -  \frac{ \psi(s) } {g(s)}  \leq  1 -  \frac{ \varphi(s) } {g(s)} <  1 - \frac{  \varphi(s) } {f(s)} = f'(s).
$$
Hence, integrating over $[0,s_1]$ the above inequality gives 
$$g(s_1) - g(0) = \int_0^{s_1} g'(s) ds <  \int_0^{s_1} f'(s) ds  = f(s_1) - f(0).$$ 
However, by construction, $f(s_1) = g(s_1)$.  Thus, the above inequality contradicts $g(0) < f(0)$ and we have proved \eqref{odei2}. The proof of \eqref{odeii2} is identical and is omitted.
\qed\end{proof}

\subsection{Proof of Theorem~\ref{th:mainex}}
\label{subsec:prfmainex}
We first linearize \eqref{eq:odex} with $\varphi (s) = \lambda ( 1 - e^{-s})$
in the neighborhood of $\lambda= 1/4$.

\paragraph{Step one : Linearization from above.}
We have $\varphi'(0) = \lambda$, and from the concavity of $\varphi$,
\begin{equation}\label{eq:phiLB}
\varphi(s) \leq \lambda s .
\end{equation}
We take $\lambda > 1/4$ and consider the linearized differential equation
\begin{equation}\label{eq:odey}
y'' - y ' + \lambda y= 0.
\end{equation}
The solutions of this differential equation are
$$
y ( t ) = a \sin ( \omega t ) e^{\frac t 2} + b \cos ( \omega t ) e^{\frac t 2},
$$
with
$$
\omega = \sqrt { \lambda  - \frac 1 4}.
$$
We use this ODE to upper bound $x' (0)$ if $x = A(x)$. Recall that $A$ depends implicitly on $\lambda$.  
\begin{lemma}\label{le:UBx}
For any $\lambda > 1/4$, let  $x \in \cH$ such that  $x = A( x  ) $ and $x  \not\equiv 0$. We have
$$
x' (0) \leq  \frac{e^{2}}{4} e^{ - \frac{ \pi }{2 \omega} }  (  1+ O (\omega^2)).
$$
\end{lemma}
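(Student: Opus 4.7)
The idea is to apply Lemma \ref{le:compode}(i) with $\psi(s) = \lambda s$, which lies in $\cD$ and dominates $\varphi$ by concavity. For any solution $y$ of the linearized ODE $y'' - y' + \lambda y = 0$ with $y(0) = 0$ and $y'(0) > 0$ whose trajectory meets the line $\{y' = \lambda\}$ while still inside $\dR_+^2$, the lemma will give $y'(0) \geq x'(0)$. To get the sharpest bound on $x'(0)$ I will pick the \emph{smallest} admissible initial slope.

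The explicit solution is $y(t) = a\sin(\omega t)\,e^{t/2}$ with $y'(0) = a\omega$. A short trigonometric rewriting gives
$$
y'(t) = a\sqrt{\lambda}\,e^{t/2}\sin(\omega t + \alpha),\qquad y''(t) = a\lambda\,e^{t/2}\sin(\omega t + 2\alpha),
$$
where $\alpha = \arctan(2\omega) \in (0,\pi/2)$ satisfies $\sin\alpha = \omega/\sqrt\lambda$, $\cos\alpha = 1/(2\sqrt\lambda)$. On $[0,(\pi-\alpha)/\omega]$ both $y$ and $y'$ are nonnegative, so the trajectory remains in $\dR_+^2$, and $y'$ attains its maximum on this interval at the unique inflection point $t^\star = (\pi - 2\alpha)/\omega$, with value $y'(t^\star) = a\omega\,e^{t^\star/2}$. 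I then tune $a$ so that this maximum is exactly $\lambda$, which yields
$$
a\omega = \lambda\,e^{-(\pi - 2\alpha)/(2\omega)}.
$$

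For this critical $a$ (or any slightly larger value, followed by a limiting argument), the hypotheses $T_+ = t^\star < T$, $T_+ < \infty$, and $\varphi \leq \psi$ of Lemma \ref{le:compode}(i) are satisfied, so $x'(0) \leq y'(0) = a\omega$. The conclusion then follows from the expansion $\arctan(2\omega) = 2\omega + O(\omega^3)$, which gives $(\pi - 2\alpha)/(2\omega) = \pi/(2\omega) - 2 + O(\omega^2)$ and hence $e^{-(\pi - 2\alpha)/(2\omega)} = e^2 e^{-\pi/(2\omega)}(1 + O(\omega^2))$; combined with $\lambda = 1/4 + \omega^2$ this delivers the stated bound $\frac{e^2}{4}\,e^{-\pi/(2\omega)}(1+O(\omega^2))$.

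The computations are elementary, so I do not anticipate a genuine technical obstacle. The only point requiring care is verifying that the trajectory of $y$ does not exit the positive quadrant before $y'$ reaches $\lambda$, which amounts to the inequality $t^\star = (\pi - 2\alpha)/\omega < (\pi - \alpha)/\omega$; this holds for every $\omega > 0$ since $\alpha > 0$. Once this is in place, the whole estimate reduces to a Taylor expansion of $\arctan$ at $0$.
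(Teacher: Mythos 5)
Your argument is correct and is essentially the paper's own proof: the paper takes the same test solution $y(t)=a\sin(\omega t)e^{t/2}$ of $y''-y'+\lambda y=0$, locates the same inflection time (your $t^\star=(\pi-2\alpha)/\omega$ with $\alpha=\arctan(2\omega)$ equals the paper's $\tau=\frac{\pi}{\omega}-\frac{1}{\omega}\arctan\bigl(\frac{\omega}{1/4-\omega^2}\bigr)$ by the double-angle formula), tunes $a$ so that $y'(\tau)=\lambda$, and applies Lemma \ref{le:compode}(i) with $\psi(s)=\lambda s$ followed by the same Taylor expansion. Your phase-angle rewriting of $y'$ and $y''$ is only a cosmetic variation on the paper's direct computation.
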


\begin{proof}
Let $a > 0$ and consider the function 
\begin{equation}\label{eq:deffyy}
y (t) =  a \sin ( \omega t ) e^{\frac t 2}.\end{equation} We have 
$y(0) = 0$, $y'(0) = a \omega$,
\begin{equation}\label{eq:patchy'}
y'(t) =  a e^{\frac t 2} (  \omega \cos (\omega t) + \frac 1 2 \sin ( \omega t ) ),
\end {equation} and
$$
y''(t) = a e^{\frac t 2} \PAR{ \omega \cos (\omega t ) + \PAR{ \frac 1 4  - \omega^2 } \sin (\omega t) }.
$$
Define
\begin{equation}\label{eq:ggtau}
\tau= \frac {\pi}{\omega} -  \frac 1 {\omega}
\arctan \PAR{ \frac {\omega}{\frac 1 4 - \omega^2}} = \frac {\pi}{\omega} - 4 + O (\omega^2).
\end{equation}
On the interval $[0, \tau]$, $y''(t) \geq 0$ and $y''(\tau) = 0$. Thus the function $y'(t)$ is increasing on $[0,\tau]$. Moreover, since $\cos(\omega \tau) =  - 1 + O ( \omega^2)$ and $\sin(\omega \tau) =  4 \omega + O ( \omega^3)$, we get from \eqref{eq:patchy'} that
$$
y'(\tau) = e^{-2} a e^{\frac{ \pi }{2 \omega} } (  \omega + O (\omega^3)).
$$
Using \eqref{eq:ggtau}, we have $\exp ( \tau /2) =\exp( \pi / (2\omega) - 2) ( 1  + O  ( \omega^2))$. Hence, we may choose $a$ in \eqref{eq:deffyy} such that $y'(\tau) = \lambda = \frac 1  4 + \omega^2$ with
$$
a = \frac{e^{2}}{4}  \frac{ e^{ - \frac{ \pi }{2 \omega} } }{ \omega} (  1+ O (\omega^2)).
$$
From what precedes, on the interval $[0,\tau]$, $y(t) > 0$ and $y'(t) >0$. From \eqref{eq:phiLB}, we may thus use Lemma \ref{le:compode}$(i)$ with $T_+ = \tau$ and $\psi(s) = \lambda s$. We get $x'(0) \leq y'(0) = a \omega$. 
\qed\end{proof}

\paragraph{Step two : linearization from below.}

For $0 < \kappa< 1$, we define
$$
\ell=  \frac 1 4 + \kappa^2 \omega^2 < \lambda,
$$
and the function in $\cD$
$$
\psi ( s) = \min \left( \ell s  , \varphi(s) \right).
$$
In particular
\begin{equation}\label{eq:phiUB}
\varphi  \geq  \psi.
\end{equation}
We shall now consider the linear differential equation
\begin{equation}\label{eq:odey2}
y'' - y ' + \ell y= 0,
\end{equation}
The solutions of \eqref{eq:odey2} are
$$
y ( t ) = a \sin ( \omega \kappa t ) e^{\frac t 2}  + b \cos ( \omega \kappa t ) e^{\frac t 2}.
$$
A careful choice of $a,\kappa$ will lead to the following lower bound.
\begin{lemma}\label{le:LBx}
For any $\lambda > 1/4$, let  $x \in \cH$ such that  $x = A( x  ) $ and $x  \not\equiv 0$. We have
$$
x' (0) \geq  \frac{ 8 e}{\pi} \omega^3  e^{ - \frac{ \pi }{2 \omega  } }  (  1+ O (\omega^2)).
$$
\end{lemma}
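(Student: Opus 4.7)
The plan is to parallel the proof of Lemma~\ref{le:UBx} but linearize $\varphi$ from below. With $\psi(s)=\min(\ell s,\varphi(s))\le\varphi(s)$ as in \eqref{eq:phiUB}, I would apply Lemma~\ref{le:compode}\eqref{odeii2}: it suffices to exhibit a solution $y$ of $y''-y'+\psi(y)=0$ with $y(0)=0$, $y'(0)>0$ whose phase trajectory enters the region $\Delta'=\{(s_1,s_2)\in\dR_+^2 : s_2<\varphi(s_1)\}$ at some $T_-<T$; the lemma then gives $x'(0)\ge y'(0)$, and one optimizes $y'(0)$ over the free parameters $a$ and $\kappa$.

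As long as $y(t)\le s^*$, where $s^*>0$ is the unique positive root of $\ell s=\varphi(s)$, the equation is linear and its solution with $y(0)=0$ is $y(t)=a\sin(\omega\kappa t)\,e^{t/2}$. The key idea is to tune $a$ so that the trajectory $(y,y')$ hits the ``corner'' $(s^*,\ell s^*)=(s^*,\varphi(s^*))$ of the graph of $\psi$ at a single time $t_1>0$, i.e.\ $y(t_1)=s^*$ and $y'(t_1)=\ell s^*$ simultaneously. The ratio condition $y'(t_1)/y(t_1)=\ell$ forces $\cot(\omega\kappa t_1)=(\ell-\tfrac12)/(\omega\kappa)$, whence
\[
\omega\kappa t_1=\pi-\arctan(4\omega\kappa)+O(\omega^2)=\pi-4\omega\kappa+O(\omega^3).
\]
In particular $t_1<t_{\max}$, so $y$ is strictly increasing and positive on $(0,t_1]$, the linearisation is valid throughout $[0,t_1]$, and the hypotheses $T_-=t_1<T$, $T_-<\infty$ of Lemma~\ref{le:compode}\eqref{odeii2} are satisfied, giving $x'(0)\ge y'(0)=a\omega\kappa$.

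To compute $a$, expand $\lambda(1-e^{-s})=\lambda s-(\lambda/2)s^2+O(s^3)$ in the defining equation $\ell s=\varphi(s)$ to get $s^*=2(\lambda-\ell)/\lambda+O((\lambda-\ell)^2)=8(1-\kappa^2)\omega^2\bigl(1+O(\omega^2)\bigr)$, and combine with the expansions $e^{t_1/2}=e^{\pi/(2\omega\kappa)-2}(1+O(\omega^2))$ and $\sin(\omega\kappa t_1)=4\omega\kappa(1+O(\omega^2))$. Plugging these into $y(t_1)=s^*$ yields
\[
y'(0)=a\omega\kappa = 2(1-\kappa^2)\,e^2\,\omega^2\,e^{-\pi/(2\omega\kappa)}\bigl(1+O(\omega^2)\bigr).
\]
Finally, optimize over $\kappa$ by writing $1-\kappa^2=c\omega$ with $c>0$ a constant. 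Since $\pi/(2\omega\kappa)=\pi/(2\omega)+c\pi/4+O(\omega)$, one gets $y'(0)\sim 2ce^2\,\omega^3\,e^{-\pi/(2\omega)}\,e^{-c\pi/4}$, and the function $c\mapsto c\,e^{-c\pi/4}$ attains its maximum $4/(\pi e)$ at $c=4/\pi$, giving the prefactor $2\cdot(4/\pi)\cdot e^2\cdot e^{-1}=8e/\pi$.

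The main technical obstacle will be sharpening the various $O(\omega)$ errors arising from the expansion of $\pi/(2\omega\kappa)$ to the $O(\omega^2)$ promised in the statement. This requires a refined ansatz $1-\kappa^2=(4/\pi)\omega+c_2\omega^2+\cdots$ with $c_2$ chosen to cancel the next-order term in the expansion of $\pi/(2\omega\kappa)$, together with careful bookkeeping of $O(\omega^2)$ corrections in the expansions of $\arctan(4\omega\kappa)$, $s^*$, and $e^{t_1/2}$. The argument itself is routine beyond that point, since near the optimum $c=4/\pi$ the prefactor $ce^{-c\pi/4}$ has no linear term, so $O(\omega^2)$ errors in $c$ only affect the leading constant to second order.
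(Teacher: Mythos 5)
Your proposal is correct and follows essentially the same route as the paper: the same truncated nonlinearity $\psi=\min(\ell s,\varphi)$ with $\ell=\tfrac14+\kappa^2\omega^2$, the same linear solution $a\sin(\omega\kappa t)e^{t/2}$, and your corner-hitting condition $y'(t_1)/y(t_1)=\ell$ is exactly the paper's condition $y''(\tau)=0$ (via the linear ODE), so $t_1=\tau$ and the normalization $y(\tau)=s^*=\sigma$ coincides with the paper's choice of $a$. Your optimization over $c$ in $1-\kappa^2=c\omega$ just rederives the paper's choice $\kappa=1-2\omega/\pi$ (i.e. $c=4/\pi$) and the constant $8e/\pi$, so the two proofs are the same argument.
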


\begin{proof}
For $a> 0$, we look at the solution 
\begin{equation}\label{eq:defgfuegy}
y (t) =  a \sin ( \omega \kappa t ) e^{\frac t 2}.
\end{equation}
We have $y(0) = 0$, $y'(0) = a  \kappa \omega$.
$$y'(t) =  a e^{\frac t 2} (  \omega \kappa \cos (\omega \kappa t) + \frac 1 2 \sin ( \omega\kappa  t ) ).$$
We repeat the argument of Lemma \ref{le:UBx}. On the interval $[0, \tau]$, $y''(t) \geq 0$ and $y''(\tau) = 0$, where
\begin{equation}\label{eq:deffetau}
\tau= \frac {\pi}{\omega \kappa} -  \frac 1 {\omega\kappa }
\arctan \PAR{ \frac {\omega\kappa }{\frac 1 4 - \omega^2 \kappa^2}} = \frac {\pi}{\omega \kappa } - 4 + O (\omega^2),
\end{equation}
and the $O(\cdot)$ is uniform over all $\kappa > 1/2$. The function $y'(t)$ is increasing on $[0,\tau]$ and
$$
y'(\tau) =  a e^{-2}  e^{\frac{ \pi }{2 \omega} }  \omega \kappa ( 1 +  O (\omega^2)).
$$
Now, we have $\ell s   \leq  \varphi(s) $ for all $ s \in [0, \si]$ with
$$
\ell \si = \lambda ( 1 - e^{-\si}).
$$
It gives
$$
\si = 2  \PAR{ 1 - \frac{ \ell } {\lambda}  } + O \PAR{ 1 - \frac{ \ell } {\lambda}  }^2 = 8 ( 1 - \kappa^2)
\omega^2 + O ( ( 1 - \kappa^2) \omega^4).
$$ However from \eqref{eq:odey2}, for $t = \tau$, since $y''(\tau) = 0$, we have
$$
\frac{ y'(\tau) } { y (\tau) } =  \ell = \frac 1 4 + \kappa^2 \omega^2.
$$
From \eqref{eq:deffetau}, we have $\sin (\omega \kappa \tau) = 4 \omega \kappa + O (\omega^3)$ and $\exp ( \tau /2) = \exp ( \pi / (2\omega \kappa) - 2) ( 1 + O ( \omega^2))$. In \eqref{eq:defgfuegy}, we may thus choose $a$ such that $y(\tau) = \si$ by setting
$$
a = \si \frac{e^{2}}{4}  \frac{ e^{ - \frac{ \pi }{2 \omega \kappa} } }{ \omega \kappa} (  1+ O ( \omega^2)) = 2 e^{2}  e^{ - \frac{ \pi }{2 \omega \kappa} }  \frac{( 1 - \kappa^2) \omega }{ \kappa} (  1+ O ( \omega^2)) .
$$
Now, in the domain $0 \leq y \leq \si$, $\psi(y) = \ell \si$ and the non-linear differential equation $y'' - y ' + \psi(y)$ obviously
coincides with \eqref{eq:odey2}.  Thus, using  \eqref{eq:phiUB} and Lemma \ref{le:compode}$(ii)$ with $T_- = \tau$, it leads to
$$
x'(0)  \geq  y'(0) = a \kappa \omega = 2 e^{2}  e^{ - \frac{ \pi }{2 \omega \kappa} } ( 1 - \kappa^2) \omega^2   (  1+ O ( \omega^2)).
$$
Taking finally $\kappa = 1 - 2 \omega / \pi$ gives the statement.
\qed\end{proof}

\paragraph{Step three : End of proof.}

We now complete the proof of Theorem \ref{th:mainex}. We should consider the function $x(t)$ defined by \eqref{eq:defx}. We have seen that $x = A(x)$ and $x \not\equiv 0$ if $\lambda >1/4$. We start with the left hand side
inequality. From \eqref{eq:Xbox} in Lemma \ref{le:trajODE}, $x'(t)$ is increasing and we have
$$
x(t ) \geq x '(0) t.
$$
It follows from \eqref{eq:qQ} that
$$
q(\lambda ) = \int_0 ^\infty e ^ { - x (t)  } e^{ -t } dt \leq  \int_0 ^\infty e ^ { - x'(0) t  } e^{ -t } dt = \frac{ 1 } { 1 + x '(0)}.
$$
It remains to use Lemma \ref{le:LBx} and we obtain the left hand side of Theorem \ref{th:mainex}.

We now turn the right hand side inequality. For $X = (x_1, x_2) \in \dR^2$, define $G(X) = ( x_2, x_2 )$.  From
the definition of $F$ in \eqref{eq:defFX}, we have,  component-wise, for any $X \in \dR^2$,
$$
F(X) \leq G(X).
$$
Note also that $G$ is monotone : if component-wise $X \leq Y$ then $G(X) \leq G(Y)$. A vector-valued extension of Gronwall's inequality implies that if $X(0 ) = Y(0)$, $X' = F(X)$ and $Y' = G (Y)$ then, component-wise,
$$
X(t) \leq Y(t),
$$
(see e.g.  \cite[Exercise 4.6]{MR0171038}). Looking at the solution of $y'' - y' = 0$ such that $y(0) = 0$ and $y'(0 ) = x'(0)$, we get that
$$
x(t) \leq x'(0) (e^{t} - 1).
$$
We deduce from \eqref{eq:qQ} that, for any $T > 0 $,
\begin{align*}
q(\lambda)   = \int_0 ^\infty e ^ { - x (t)  } e^{ -t } dt 
& \geq \int_0 ^T e ^ { - x'(0) (e^{t} - 1) } e^{ -t } dt \\
& \geq  \int_0 ^T  ( 1 - x'(0) (e^{t} - 1) )  e^{ -t } dt \\
& \geq 1 - e^{-T}  - x'(0) T.
\end{align*}
Now, we notice that in order to prove  Theorem \ref{th:mainex},  by Lemma \ref{le:UBx}, we
may choose $\lambda$ close enough to $1/4$ so that $x'(0) < 1$. We finally take $T = -  \ln ( x'(0)  )$ and
apply Lemma \ref{le:UBx}. This concludes the proof of Theorem \ref{th:mainex}. \qed

\section{Proof of Theorem~\ref{th:RSstab}}

\label{pf1}

We define  the set recovered and infected vertices as $R(t) = \{ v \in V : X_v (t) = R \}$ and $I (t) =  \{ v \in V : X_v (t) = I \}$. The set $R(t)$ being non-decreasing, we may define $R(\infty) = \cup_{ t > 0} R(t)$ and $Z = | R(\infty) |  \in \dN \cup \{ \infty \}$. Note that also a.s. $R(\infty) = \{ v \in V : \exists t > 0 , X_v(t) = I \}$, in words,  $R(\infty)$ is the set of vertices which have been infected at some time.

Throughout this section, the chase-escape process is constructed thanks to i.i.d. $\EXP(\lambda)$ variables $(\xi_v)_{v \in V}$ and independent  i.i.d. $\EXP(1)$ variables $(D_v)_{v \in V}$. The variable $\xi_v$ (resp. $D_v$) is the time by which $v \in V$ will be infected (resp. recovered) once its ancestor is infected  (resp. recovered). 

\subsection{Subcritical regime}

We fix $0 < \lambda < \lambda_1$. In this paragraph we prove that $q_T (\lambda) =1$ if $T$ has upper growth rate at most $d$.  It is sufficient to prove that 
$\dE_\lambda Z $. To this end, we will upper bound the probability that $v \in R(\infty)$ for any $v \in V$. Let $V_k$ be the set of vertices of $V$ which are at distance $k$ from the root $\o$.  Let $v \in V_n$ and $v_0, \cdots, v_n$ be the ancestor line of $v$: $v_0 = \o$ and $v_{n} = v$. The vertex $v$ will have been infected if and only if for all $1 \leq m \leq n$, $v_{m-1}$ has infected $v_{m}$ before being recovered. We thus find
$$
\dP_\lambda ( v \in R(\infty) ) = \dP_\lambda \PAR{ \forall 1 \leq m \leq n, \sum_{i = 1} ^ m \xi_{v_i} < \sum_{i=1} ^m D_{v_{i-1}} } \leq \dP_\lambda \PAR{ \sum_{i = 1} ^ n \xi_{v_i} < \sum_{i=1} ^n  D_{v_{i -1}} }.
$$
The Chernov bound gives for any $0 < \theta < 1$,
\begin{eqnarray*}
 \dP_\lambda \PAR{ \sum_{i = 1} ^ n \xi_{v_i}< \sum_{i=1} ^n D_{v_{i-1}} } & \leq &  \dE_\lambda \exp \BRA{ \theta  \PAR{ \sum_{i=1} ^n D_{v_{i-1}} - \sum_{i = 1} ^ n \xi_{v_i}  } } \\
& = &  \PAR{\frac{ 1 } { 1 - \theta } }^n \PAR { \frac { \lambda } { \lambda + \theta } }^n ,
\end{eqnarray*}
where we have used the independence of all variables at the last line. Now, the above expression is minimized for $\theta = (1 - \lambda ) /2 > 0$ (since $\lambda < \lambda_1 < 1$). We find
$$
\dP_\lambda ( v \in R(\infty) )  \leq \PAR{\frac{  4 \lambda } {(\lambda +1)^2  } }^n.
$$
Also, from the growth-rate assumption, there exists a sequence $\veps_n \to 0$, 
\begin{equation*}\label{eq:cardVn}
| V_n | \leq   ( d + \veps_n )^n.
\end{equation*}
It follows that
$$
\dE_\lambda Z  = \sum_{v \in V} \dP_\lambda ( v \in R(\infty) )  \leq \sum_{n = 0} ^\infty \PAR{\frac{  4 (d+\veps_n) \lambda } {(\lambda +1)^2 } }^n.
$$
It is now straightforward to check that
$$
\frac{  4 d \lambda } {(\lambda +1)^2 } < 1,
$$
if $\lambda < \lambda_1$. This concludes the first part of the proof.

\subsection{Supercritical regime}

We now fix $\lambda > \lambda_1$. We should  prove that $q_T (\lambda)  < 1$ under the assumption that $T$ is lower $d$-ary. We are going to construct a random subtree of $T$ whose vertices are elements of $R(\infty)$ and which is a supercritical Galton-Watson tree.

First observe that we can couple two chase-escape processes with intensities $\lambda > \lambda'$ on  the same probability space in such a way that they share the same variables $(D_v)_{v \in V}$ and for all $v \in V$, $\xi_v^{(\lambda)} \leq \xi_v^{(\lambda')}$  (for example, we take $\xi_v^{(\lambda')} = (\lambda/\lambda'  ) \xi_v^{(\lambda)}$). The event of non-extinction is easily seen to be non-increasing in the variables $(\xi_v)_{v \in V}$ for the partial order on $\dR_+ ^V$ of component-wise comparison. It follows that the function $\lambda \mapsto q_T(\lambda)$ is non-increasing. We may thus assume without generality that $\lambda_1 < \lambda < 1$. For \(\delta  >0,\) we define the function $g_{\delta  }$ by, for all $x > 0$,
\begin{eqnarray*}\label{g_ep}
g_{\delta }(x) & = &  \frac 1 x  -  \log\left(\frac{1}{x }\right) +
\frac{\lambda }{x} - \log\left(\frac{\lambda }{x}\right) -2  - \log (\delta) \\
& = & \frac{ 1 + \lambda } { x } + \log \PAR{ \frac{ x^2 } {\lambda \delta}  } -2.
\end{eqnarray*}
Taking derivative, the minimum of $g_\delta$ is reached at $c= (1 + \lambda) /2$. We deduce easily the following property of the function \(g_{d }\).
\begin{lemma} \label{g_lem} If $\lambda_1 < \lambda < 1$, \(\min_{x  > 0 } g_{d} (x)<0\).
\end{lemma}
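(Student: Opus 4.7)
The plan is to prove the lemma by a direct computation: find the minimum of $g_d$ explicitly, then read off its sign from the defining equation of $\lambda_1$.

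First I would compute $g_d'(x) = -(1+\lambda)/x^2 + 2/x$ and check that this vanishes uniquely at $c = (1+\lambda)/2$, where the second derivative is positive, so that $c$ is indeed the global minimum on $(0,\infty)$ as asserted in the paper. Substituting $x = c$ into the second (cleaner) form of $g_d$,
\[
g_d(c) = \frac{1+\lambda}{c} + \log\!\left(\frac{c^2}{\lambda d}\right) - 2 = 2 + \log\!\left(\frac{(1+\lambda)^2}{4 \lambda d}\right) - 2 = \log\!\left(\frac{(1+\lambda)^2}{4\lambda d}\right).
\]

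Hence $\min_{x>0} g_d(x) < 0$ is equivalent to the algebraic inequality $(1+\lambda)^2 < 4\lambda d$, i.e.\ $\lambda^2 - 2(2d-1)\lambda + 1 < 0$. The quadratic $P(\lambda) = \lambda^2 - 2(2d-1)\lambda + 1$ has discriminant $4(2d-1)^2 - 4 = 16 d(d-1) > 0$ (since $d>1$) and roots
\[
(2d-1) \pm 2\sqrt{d(d-1)},
\]
the smaller of which is exactly $\lambda_1$. Since the product of the two roots is $1$, the larger root is $1/\lambda_1 > 1$, so the inequality $P(\lambda) < 0$ holds precisely for $\lambda_1 < \lambda < 1/\lambda_1$. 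The hypothesis $\lambda_1 < \lambda < 1$ places $\lambda$ strictly inside this interval, and the conclusion $g_d(c) < 0$ follows.

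There is no real obstacle here: the lemma reduces to a one-variable minimization whose minimizer is already given in the excerpt, after which the only thing to recognize is that the critical inequality $(1+\lambda)^2 < 4\lambda d$ is precisely the quadratic condition defining $\lambda_1$ as its smallest root. The cleanest write-up is just to present the two displayed computations above.
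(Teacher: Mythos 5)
Your proof is correct and is exactly the computation the paper leaves implicit: minimize $g_d$ at $c=(1+\lambda)/2$, get $g_d(c)=\log\bigl((1+\lambda)^2/(4\lambda d)\bigr)$, and observe that $(1+\lambda)^2<4\lambda d$ is the quadratic condition whose smaller root is $\lambda_1$ (with the larger root $1/\lambda_1>1$), so $g_d(c)<0$ for $\lambda_1<\lambda<1$. Nothing further is needed.
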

By Lemma \ref{g_lem}, using continuity, we deduce that there exist $c > 0$ and $1 < \delta < d$ such that
$$
g_{\delta} (c) < 0.
$$
In the remainder of the proof, we fix such pair $(c, \delta)$.

\paragraph{Construction of a nested branching process.}

We fix an integer \(m \geq 1\) that we will be completely specified later on. We assume that $m$ is large enough such that $T^{*m}$ contains a $\lceil \delta^{m} \rceil$-ary subtree. We denote by $T'$ this subtree and by $\rho \in V$ its root. For integer $k \geq 0$, we define $V'_k$ as the set of vertices of generation $k$ in $T'$. Note that by assumption $$|V'_k | = \lceil \delta^{m} \rceil ^ k.$$ We may assume that the generation of $\rho$ in $T$ is larger than $m$. We denote $a (\rho) \in V$ the $m$-th ancestor of $\rho$ in $T$. For $z \in V'_k$ and $k \geq 1$, we denote by $a (z)  \in V'_{k-1}$ its ancestor in $T'$. For example, if $z \in V'_1$,  $a(z) = \rho$.

We now start a branching process as follows. We set \(\rho\) to
be the root of the process, $\cS_0 = \{ \rho \}$. For integer $k \geq 1$, we define recursively the offsprings of the $k$-th generation as
the set $\cS_k$ of vertices $z \in V'_k$ satisfying the following three conditions :
\begin{enumerate}
\item the vertex $a(z) \in V'_{k-1}$ belongs to $\cS_{k-1}$;
\item  $\sum_{ i =1} ^m  \xi_{v_i} \leq \frac{m}{c}$ where $(v_0, v_1,\cdots, v_m)$ is the set of the vertices on the path from $a(z)$ to $z$, $v_0 = a(z)$, $v_m = z$;
\item $\sum_{ i  = 1} ^{m} D_{v_{i-1}} \geq \frac{m}{c}$ with $(v_0, v_1,\cdots, v_m)$  as above.
\end{enumerate}

Thus for \(z \in V'_k,\)  such that its ancestor $a(z) \in \cS_{k-1}$, we have that
\begin{eqnarray}\label{eq:offspringnested}
\dP_\lambda  \PAR{ z \in {\cS}_k | \cS_{k-1} } =  \dP_\lambda \left(\sum_{i =1}^{m} \xi_{v_i} \leq \frac{m}{c}\right)\dP_\lambda \left(\sum_{i =1}^{m} D_{v_{i-1}} \geq \frac{m}{c}\right). 
\end{eqnarray}

Notice that by construction, the number of offsprings of $z \ne z'$ in $\cS_{k-1}$ are  identically distributed and  are independent. It follows that the process forms a Galton-Watson  branching process. In the next paragraph, we will check that this branching process is supercritical, i.e. we will prove that
\begin{equation}\label{eq:Msup}
M = \sum_{v \in V'_1} \dP_\lambda  \PAR{ z \in {\cS}_1  }  > 1.
\end{equation}
It implies that with positive probability, the branching process does not die out (see Athreya and Ney \cite[chapter 1]{MR2047480}).

Before proving \eqref{eq:Msup}, let us first check that it implies Theorem~\ref{th:RSstab}. Assume that at some time $t > 0$, the vertex $\rho$ becomes infected and that $a(\rho)$ is still infected. Assume further that $\sum_{i=1}^m D_{v_{i-1}} \geq m/c$, where $(v_0, v_1,\cdots, v_m)$ is the set of the vertices on the path from $a(\rho)$ to $\rho$. Note that the existence of such finite time $t > 0$ and such sequence $(D_{v_i})_{0 \leq i \leq m-1}$ has positive probability. Let us denote by $E$ such event. We set $t_0 = t$ and, for integer  $k \geq 1$, $$t_k = t_{k-1} +\frac{m}{c}.$$
By construction, if $E$ holds and $z \in \cS_k$ then, at time $t_k$, $z$ and $a(z)$ are both infected. Hence on the events of $E$ and of non-extinction of the nested branching process, the chase-escape process does not get extinct. It thus remains to prove that \eqref{eq:Msup} holds.

\paragraph{The nested branching process is supercritical.}

We need a standard large deviation estimate. We define
$$
J (x) = x - \log{x} - 1.
$$
The next lemma is an immediate consequence of Cramer's Theorem for exponential variables (see \cite[\S 2.2.1]{MR2571413}).
\begin{lemma}\label{lm1}
Let \((\zeta_i)_{ i \geq 1},\) be i.i.d. $\EXP(\lambda)$ variables. For any \(a > 1/\lambda,\)
we have that
$$\liminf_{m \to \infty} \frac{1}{m}\log\dP\left(\sum_{i = 1}^{m} \zeta_i \geq a m\right) \geq - J (\lambda a),$$
while, for any \(a < 1/\lambda,\)
\[\liminf_{m \to \infty} \frac{1}{m}\log\dP \left(\sum_{i = 1}^{m} \zeta_i \leq am\right) \geq - J(\lambda a).\] \end{lemma}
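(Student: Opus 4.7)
The plan is to derive Lemma \ref{lm1} as a direct consequence of Cramér's theorem for i.i.d.\ real random variables, after an explicit computation of the rate function associated with the exponential distribution. First I would reduce to $\lambda = 1$ by rescaling: the variables $\lambda \zeta_i$ are i.i.d.\ $\EXP(1)$, and both probabilities in the statement become $\dP(\sum_{i=1}^m \lambda\zeta_i \geq \lambda a m)$ or $\dP(\sum_{i=1}^m \lambda\zeta_i \leq \lambda a m)$. So it suffices to handle the unit-rate case with threshold $b = \lambda a$ and show the lower bounds $\liminf (1/m)\log \dP(\cdots) \geq -J(b)$ in the two regimes $b > 1$ and $b < 1$.

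Next I would identify $J$ as the Cramér rate function for $\EXP(1)$. The logarithmic moment generating function of $\zeta \sim \EXP(1)$ is $\Lambda(\theta) = -\log(1-\theta)$ for $\theta < 1$ (and $+\infty$ otherwise). Its Legendre--Fenchel transform is
\[
\Lambda^*(b) = \sup_{\theta < 1} (\theta b + \log(1-\theta)).
\]
Differentiating, the supremum is attained at $\theta^* = 1 - 1/b$ (which lies in $(-\infty,1)$ for any $b > 0$), and substitution gives $\Lambda^*(b) = b - 1 - \log b = J(b)$.

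Finally, Cramér's theorem for i.i.d.\ real variables (see e.g.\ \cite[Thm.~2.2.3]{MR2571413}) states that for any closed half-line $F$ of the form $[b,\infty)$ with $b > \dE[\zeta] = 1$, one has $\liminf_{m \to \infty} (1/m)\log \dP(S_m/m \in F) \geq -\inf_{x \in F^\circ} \Lambda^*(x)$; since $\Lambda^*$ is continuous and strictly increasing on $[1,\infty)$, the infimum equals $\Lambda^*(b) = J(b)$. This gives the first inequality. The second inequality follows symmetrically from the corresponding lower-tail statement of Cramér's theorem applied to $(-\infty, b]$ with $b < 1$, using that $\Lambda^*$ is strictly decreasing on $(0,1]$.

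There is no serious obstacle here: the whole content is computing the Legendre transform of $-\log(1-\theta)$ and invoking the general theorem. The only mild subtlety is ensuring the rate function is evaluated at an interior point of the target set so that the lower bound of Cramér's theorem is indeed $-J(b)$; both strict inequalities $a > 1/\lambda$ and $a < 1/\lambda$ in the statement are precisely what guarantee this.
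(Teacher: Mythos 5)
Your proposal is correct and matches the paper's approach: the paper simply cites Cramér's theorem for exponential variables (\cite[\S 2.2.1]{MR2571413}) without further detail, and your computation of the Legendre transform $\Lambda^*(b)=b-1-\log b=J(b)$ together with the lower bound over the interior of the half-lines supplies exactly the standard details behind that citation. The rescaling to rate-one exponentials and the continuity/monotonicity argument ensuring the infimum equals $J(\lambda a)$ are both sound.
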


Note that the bounds of Lemma~\ref{lm1} hold for all $a > 0$ (even if they are sharp only for the above ranges).  We may now estimate the terms in \eqref{eq:offspringnested}. We have from Lemma~\ref{lm1} that
\begin{eqnarray*}
 \dP_\lambda \left(\sum_{i =1}^{m} \xi_{v_i} \leq \frac{m}{c}\right)& \geq & \exp\BRA{-m J \left(\frac{\lambda }{c}  \right) + o(m)} \nonumber
\end{eqnarray*}
and
\begin{eqnarray*}
\dP_\lambda \left(\sum_{i =1}^{m} D_{v_{i-1}} \geq \frac{m}{c}\right) & \geq &\exp\BRA{-m J\left(\frac{1}{c}  \right) + o(m)}.
\end{eqnarray*}
Thus we obtain a lower bound on the mean number of offspring in the first generation to be
\begin{eqnarray*}
M &=& \sum_{z \in V'_1 } \dP_\lambda  (z \in {\cS}_1 ) \nonumber\\
&\geq & \lceil \delta^{m} \rceil \exp\BRA{- m  \PAR{ J \left(\frac{1}{ c }\right)  + J\left(\frac{\lambda}{ c}\right)  + o(m) }}  \nonumber\\
&\geq & \exp\left(-m g_{\delta}(c)  + o(m)\right) \label{m_eq},
\end{eqnarray*}
where \(g_{\delta}(.)\) is as defined in (\ref{g_ep}). If $m$ was chosen large enough, we have
that \(M > 1\) and hence that the branching process is supercritical. Therefore with positive
probability, this branching process does not die out. This proves the theorem. \qed

\section{Proof of Theorem ~\ref{th:RSext}}

\label{sec:RSext}

The proof of Theorem  \ref{th:RSext} parallels the proof of Theorem  \ref{th:mainex}. Even if the strategy is the same, we will meet some extra difficulties in the study of the phase diagram (notably in the forthcoming Lemma \ref{le:compode00}).

\subsection{Differential equation for the survival probability}

We first determine a differential equation associated to the probability of extinction.  Under $\dP'_\lambda$, define $Q_\lambda (t)$ to be the extinction probability given that the root $\so$ is recovered at time $t  \geq 0$ so that
\begin{equation}\label{eq:qQ0}
q(\lambda) = \int_0 ^\infty Q_{\lambda} (t) e^{-t} dt
\end{equation}
and $Q_\lambda (0) =1$.

Now, in $T$, the offsprings of the root are $\{1,\cdots,N \}$, where $N$ has distribution $P$. The root infects each of its offspring after an independent exponential variable with intensity $\lambda$.  Let $\{\xi_i\}_{1 \leq i \leq N}$ be the infection
times. Note that in $T$, the subtrees generated by each of the offsprings of the root are iid copies of $T$.  Hence, if for integer $i$ with $1 \leq \xi_i \leq D_{\so}$, we define $X^i$ as the subprocess on vertices $(i \dN^f )\cap V$ with ancestors $i$. Conditioned on $D_{\so} = t $, on $N$ and  $(\xi_i)_{1 \leq i \leq N}$, the processes $(X^i)$ are independent chase-escape processes conditioned on the fact that root becomes at risk at time $t - \xi_i$ (where we say that a $I$-vertex is at risk if its genitor is in state $R$).

For the process $X$ to get extinct, all
the processes $X^i$ must get extinct.  So finally, we get
\begin{eqnarray*}
Q_\lambda ( t )  & =  & \dE'_\lambda \SBRA{ \prod_{1 \leq i \leq N } \PAR{ \IND ( \xi_ i > t )  +  \IND ( \xi_ i \leq t ) Q_\lambda ( t - \xi_i + D_i )} }
\end{eqnarray*}
where $(D_i), i \geq 1,$  are independent exponential variables with parameter $1$. Consider the generating function of $P$
$$
\psi ( x)  = \dE'_\lambda  \SBRA{ x ^ N }= \sum_{ k =0} ^ \infty x^k P (k).
$$
Recall that $\psi$ is strictly increasing and convex on $[0,1]$ and $\psi'(1) = \dE N = d$. We find,  for any $t \geq 0$,
\begin{eqnarray*}
Q_\lambda(t)  &=&
\psi  \PAR{ e^{-\lambda t } +  \lambda \int_0 ^t e^{-\lambda x } \int_0 ^ \infty Q_\lambda ( t - x + s) e^{-s} ds dx  }  \\
& = & \psi  \PAR{ e^{-\lambda t } +  \lambda e^{-\lambda t }\int_0 ^t e^{\lambda x } \int_0 ^ \infty Q_\lambda (  x + s) e^{-s} ds dx  } \\
& =  &  \psi  \PAR{ e^{-\lambda t } +  \lambda e^{-\lambda t }\int_0 ^t e^{(\lambda +1)x} \int_x ^ \infty Q_\lambda ( s) e^{-s} ds dx } .
\end{eqnarray*}
Performing the change of variable
\begin{equation}\label{eq:defx0}
x (t)  =   \psi^{-1}   ( Q_\lambda (t) ) \in [0,1],
\end{equation}
leads to
\begin{equation}
\label{eq:fpx0t}
  x(t) = e^{ - \lambda t } +  \lambda e^{- \lambda t }\int_0 ^t e^{(\lambda +1)x} \int_x ^ \infty \psi ( x( s)  ) e^{-s} ds dx.
\end{equation}
We multiply the above expression by $e^{ \lambda  t}$ and differentiate once, it gives
\begin{equation}\label{eq:xprime0}
e^{\lambda t } ( \lambda x (t) +  x' (t) )  =  \lambda  e^{(\lambda +1)t}  \int_t ^\infty \psi( x(s))  e^{-s} ds,
\end{equation}
Now, multiplying the above expression by $e^{- ( \lambda +1 ) t}$ and differentiating once again, we find that \(x(t)\)
satisfies the differential equation
\begin{equation}\label{eq:odex0}
x'' - ( 1 - \lambda) x ' +  \varphi(x) = 0
\end{equation}
with
$$
\varphi (x) =  \lambda  \psi(x) - \lambda x.
$$

\subsection{A fixed point equation}

We define $\rho \in [0,1)$ as the extinction probability in the Galton-Watson tree:
$$
\rho = \psi ( \rho).
$$
We note that $\varphi$ is convex, $\varphi ( 1)  = \varphi (\rho )= 0$, $\varphi$ is negative on $(\rho, 1)$ and it is increasing in a neighborhood of $1$,  $\varphi'(1) = \lambda ( d - 1) > 0$. The fact that $\varphi$ is not monotone is the main difference with the proof of Theorem \ref{th:mainex} .

Let $\cH$ be the set of non-increasing functions $f : \dR_+ \to \dR_+$ such that $f(0) = 1$, $\lim_{ t \to \infty } f(t)  =  \rho$. The next lemma is an easy consequence of the monotony of the process. 
\begin{lemma}
For any $\lambda > \lambda_1$, the function $x(\cdot)$ defined by \eqref{eq:defx0} is in $\cH$.
\end{lemma}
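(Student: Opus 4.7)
The lemma requires $x(0) = 1$, $x$ non-increasing, and $\lim_{t\to\infty} x(t) = \rho$. Since $Q_\lambda(t) \in [\rho, 1]$ for every $t \geq 0$ (extinction is certain on the event $\{T \hbox{ finite}\}$, which has probability $\rho$ and is independent of $D_\so$), and since $\psi^{-1}$ is a continuous strictly increasing bijection of $[\rho, 1]$ onto itself with $\psi^{-1}(\rho) = \rho$ and $\psi^{-1}(1) = 1$, it is equivalent to establish the analogous three properties for $Q_\lambda$: $Q_\lambda(0) = 1$, $Q_\lambda$ non-increasing, and $\lim_{t \to \infty} Q_\lambda(t) = \rho$. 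The first is immediate: conditioning on $D_\so = 0$ forces the root to be recovered before it infects any offspring, so extinction is certain.

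For the monotonicity of $Q_\lambda$, the plan is to construct the chase-escape process from a single collection of independent variables $(\xi_v)_{v \in V}$ and $(D_v)_{v \in V,\, v \neq \so}$, treating $D_\so = t$ as a deterministic parameter. A direct induction along the ancestor path $\so = v_0, v_1, \ldots, v_n = v$, tracking the infection times $\iota_k = \sum_{j=1}^k \xi_{v_j}$ against the recovery times $\rho_k = t + \sum_{j=1}^k D_{v_j}$, yields the explicit pathwise infection criterion
\[
v \in R(\infty) \iff \max_{1 \leq k \leq n} \PAR{\sum_{j=1}^k \xi_{v_j} - \sum_{j=1}^{k-1} D_{v_j}} \leq t,
\]
which is manifestly non-decreasing in $t$. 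Hence, under this coupling, $R(\infty)$ is non-decreasing in $t$, and since every infected vertex is eventually recovered, extinction is equivalent to $|R(\infty)| < \infty$. It follows that the extinction event is non-increasing in $t$, so $Q_\lambda$ is non-increasing.

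For the limit, monotonicity and boundedness give the existence of $L := \lim_{t \to \infty} x(t) \in [\rho, 1]$. Passing to the limit in the fixed-point equation \eqref{eq:fpx0t} by dominated convergence (the inner integral $\int_u^\infty \psi(x(s))e^{-s} ds$ behaves like $\psi(L) e^{-u}$ for large $u$, and the outer convolution with $\lambda e^{-\lambda t} e^{(\lambda+1) u}$ recovers $\psi(L)$) yields $L = \psi(L)$, so $L \in \{\rho, 1\}$. To exclude $L = 1$: for $\lambda > \lambda_1$, Theorem \ref{th:RSstab} combined with the fact that a supercritical Galton-Watson tree is a.s. lower $d$-ary on the event of non-extinction (Lemma \ref{le:daryGWT}) yields $q_T(\lambda) < 1$ on $\{T \hbox{ infinite}\}$, hence $q(\lambda) < 1$; by \eqref{eq:qQ0} this forces $Q_\lambda(t_0) < 1$ for some $t_0$, and monotonicity propagates this to $L \leq Q_\lambda(t_0) < 1$. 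Thus $L = \rho$. The main obstacle is the monotonicity step: the chase-escape dynamics are interactive, so a qualitative comparison of two coupled processes with different $D_\so$ is not obviously monotone; the key simplification is the explicit pathwise criterion above, after which monotonicity in the single parameter $t$ becomes transparent.
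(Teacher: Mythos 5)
Your proposal is correct and follows essentially the same route as the paper: a coupling with $D_\so=t$ fixed to get monotonicity of $Q_\lambda$ (hence of $x$), passage to the limit in the fixed-point equation \eqref{eq:fpx0t} to conclude the limit is a fixed point of $\psi$, and exclusion of the value $1$ via Theorem \ref{th:RSstab}, Lemma \ref{le:daryGWT} and \eqref{eq:qQ0}. The only difference is that you make the monotonicity step explicit through the pathwise infection criterion along ancestor lines (which the paper simply asserts and which matches the criterion used in its proof of Theorem \ref{th:RSstab}), a welcome but not divergent addition.
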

\begin{proof}
As in the previous section, we may construct the chase escape process conditioned on the root is recovered at time $t$ thanks to i.i.d. $\EXP(\lambda)$ variables $(\xi_v)_{v \in V}$ and independent  i.i.d. $\EXP(1)$ variables $(D_v)_{v \in V \ne \so}$ and set $D_\so = t$. The variable $\xi_v$ (resp. $D_v$) is the time by which $v \in V$ will be infected (resp. recovered) once its ancestor is infected  (resp. recovered). The event of extinction is then non-increasing in $t$. It follows that the map $t \mapsto Q_\lambda(t)$ is non-increasing. From \eqref{eq:defx0}, it follows that $x(t)$ is also non-increasing. We may thus define $a = \lim_{t \to \infty} x(t)$. 
Using the continuity of $\psi$ leads to
$$
 \lambda e^{- \lambda t }\int_0 ^t e^{(\lambda +1)x} \int_x ^ \infty \psi ( x( s)  ) e^{-s} ds dx =  \lambda e^{- \lambda t } \int_0 ^t e^{\lambda  x}  \psi ( a ) ( 1 + o(1) )  dx, 
$$
This last integral being divergent as $t \to \infty$, we deduce that 
$$
 \lambda e^{- \lambda t }\int_0 ^t e^{(\lambda +1)x} \int_x ^ \infty \psi ( x( s)  ) e^{-s} ds dx = \psi(a) + o(1). 
$$
From \eqref{eq:fpx0t}, we get that $a = \psi (a)$ which implies that $a \in \{ \rho, 1\}$.  Note however that Theorem \ref{th:RSstab} and Lemma \ref{le:daryGWT} imply that $q(\lambda) < 1$ for all $\lambda > \lambda_1$. Then \eqref{eq:qQ0} and the monotony of $t \mapsto Q_{\lambda}(t)$ give that for all $t \geq t_0$ large enough, $Q_{\lambda} (t) < 1$. From \eqref{eq:defx0} it implies in turn that for all $t \geq t_0$, $x(t) < 1$. So finally $a \leq x(t_0) < 1$ and $a = \rho$. \qed \end{proof}

From now on in this section, we fix a small $u >0$ and we assume that
\begin{equation}\label{eq:rangelambda}
\lambda_1 < \lambda < 1 - u.
\end{equation}
We define the map $A : \cH \to L^ \infty ( \dR_+ , \dR_+)$ defined by
\begin{equation}\label{eq:defT0}
A (y) (t) = e^{-\lambda t}  + \lambda e^{-\lambda t} \int_0 ^t e^{(\lambda +1)x} \int_x ^ \infty \psi ( y( s)  ) e^{-s} ds dx.
\end{equation}
Since
$ \max_{ x \in [0,1] } |\psi(x) | = 1$ it is indeed straightforward to check
that $A (y)$ is in $ L^ \infty ( \dR_+ , \dR_+)$: $A (y)(t)$ is
bounded by $1$. Note also that $y \equiv 1$ is a solution
of the fixed point equation
$$
y = A  (y).
$$
By \eqref{eq:fpx0t}, we find that the function $x$ defined by \eqref{eq:defx0} satisfies also the fixed point $x = A(x)$. In the
sequel, we are going to analyze the non trivial fixed points of $A$.

Let $x \in \cH$ such that $x = A (x)$. Then $x \not\equiv 1$. By induction, it
follows easily that $t \mapsto x(t)$ is twice differentiable. In particular, from the argument following \eqref{eq:fpx0t}, $x$ satisfies \eqref{eq:odex0} and we are looking for a specific non-negative solution of \eqref{eq:odex0} with $x(0) = 1$. To characterize completely this solution, it would be enough to compute $x'(0)$ (which is necessarily negative since $x(0) = 1$, $x'(0) = 0$ corresponds to the trivial solution $x \equiv 1$). We will perform this in the next subsection in an asymptotic regime. We start with some properties obtained  from the phase diagram of the ODE \eqref{eq:odex0}.

\begin{lemma}\label{le:Uyprime0}
Let $x \in \cH$ such that $x = A (x)$. Then,
\begin{enumerate}[(i)]
\item
 for all $t>0$, $\rho < x(t) < 1$; 
\item for all $t \geq 0$,  
$
- 1 <   x'(t) <  0.
$
\end{enumerate}
\end{lemma}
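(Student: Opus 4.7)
The plan is to derive both bounds directly from the fixed point equation $x = A(x)$ and from its once-differentiated form \eqref{eq:xprime0}. Throughout I will use only that $\psi$ is non-decreasing, that $\psi(\rho) = \rho$ and $\psi(1) = 1$, and that $\psi$ is strictly convex on $[\rho,1]$. This strict convexity follows from $d > 1$: indeed $d = \sum_{k\geq 1} k P(k) > 1$ forces $P(k) > 0$ for some $k \geq 2$, so $\psi''(y) = \sum_{k\geq 2} k(k-1) P(k) y^{k-2} > 0$ on $(0,1]$, and consequently $\psi(y) < y$ strictly on $(\rho, 1)$ (the chord joining the two fixed points $(\rho,\rho)$ and $(1,1)$).

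For part~(i), since $x \in \cH$ takes values in $[\rho,1]$, monotonicity of $\psi$ gives $\rho \leq \psi(x(s)) \leq 1$ for every $s \geq 0$. Substituting these constant bounds into the definition \eqref{eq:defT0} of $A(x)(t)$ collapses the inner integrals (via $\int_x^\infty e^{-s}\,ds = e^{-x}$ and $\int_0^t e^{\lambda x}\,dx = (e^{\lambda t}-1)/\lambda$) to the two-sided estimate
\[
\rho + (1-\rho)e^{-\lambda t} \;\leq\; A(x)(t) \;\leq\; 1.
\]
The left-hand bound already gives $x(t) > \rho$ for every $t \geq 0$. For the strict upper bound at $t > 0$, equality $A(x)(t) = 1$ would force $\psi(x(s)) = 1$, and hence $x(s) = 1$, for almost every $s \geq 0$; this contradicts $\lim_{s\to\infty} x(s) = \rho < 1$.

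For part~(ii), I rewrite \eqref{eq:xprime0} as
\[
x'(t) \;=\; -\lambda x(t) + \lambda e^{t} \int_t^\infty \psi(x(s))\, e^{-s}\, ds.
\]
Bounding the integrand above using monotonicity, $\psi(x(s)) \leq \psi(x(t))$ for $s \geq t$, gives $x'(t) \leq -\lambda\bigl[x(t) - \psi(x(t))\bigr]$, which is strictly negative for every $t > 0$ by part~(i) and the strict inequality $\psi(y) < y$ on $(\rho,1)$; the boundary case $t = 0$ follows by the same computation, noting that $\psi(x(s)) < 1$ on a set of positive $s$-measure so that $\int_0^\infty \psi(x(s)) e^{-s}\,ds < 1$. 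Bounding the integrand below by $\psi(x(s)) \geq \psi(\rho) = \rho$ yields
\[
x'(t) \;\geq\; -\lambda x(t) + \lambda \rho \;\geq\; -\lambda(1-\rho),
\]
which is strictly greater than $-1$ since $\lambda < 1-u < 1$ by \eqref{eq:rangelambda} and $\rho \geq 0$.

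The only subtle point is establishing $\psi(y) < y$ strictly on $(\rho, 1)$, which underpins the strict negativity of $x'$; this is handled by the short convexity argument above. The remaining estimates are routine manipulations of the integral representation.
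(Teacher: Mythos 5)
Your proof is correct, and it takes a route that is genuinely different from the paper's. The paper works at the level of the second--order ODE \eqref{eq:odex0}: strictness of $x(t)>\rho$ is obtained by noting that $x$ cannot coincide on an interval with the constant solution $y\equiv\rho$ (uniqueness of solutions), and $x'(t)<0$ is obtained by a sign argument on $x''$ (if $x'(t)=0$ with $\rho<x(t)<1$ then $x''(t)=-\varphi(x(t))>0$, contradicting monotonicity), while the lower bound $x'(t)>-\lambda>-1$ comes, as in your argument, from the positivity of the right-hand side of \eqref{eq:xprime0}. You instead never invoke \eqref{eq:odex0} or ODE uniqueness: for (i) you plug the a priori bounds $\rho\leq\psi(x(s))\leq 1$ directly into \eqref{eq:defT0}, which yields the quantitative sandwich $\rho+(1-\rho)e^{-\lambda t}\leq x(t)\leq 1$ together with a short equality-case analysis for the upper bound; for (ii) you combine the once-differentiated identity \eqref{eq:xprime0} with the monotonicity of $x$ and $\psi$ and the strict inequality $\psi(y)<y$ on $(\rho,1)$, getting $x'(t)\leq-\lambda\bigl(x(t)-\psi(x(t))\bigr)<0$ and $x'(t)\geq-\lambda(1-\rho)>-1$. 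What your approach buys is that it only needs $x$ to satisfy the integral fixed-point equation (measurability and one differentiation suffice, no appeal to twice differentiability or to uniqueness for the nonlinear ODE), it sidesteps the slightly delicate justification of $x'(0)<0$ that the paper's argument for (i) leans on, and it produces an explicit lower bound on $x(t)-\rho$; what the paper's approach buys is consistency with the phase-portrait framework ($F$, $\Phi$, $\Gamma$) used in the rest of Section 4, where the ODE viewpoint is needed anyway. Your justification of $\psi(y)<y$ on $(\rho,1)$ via $P(k)>0$ for some $k\geq 2$ (forced by $d>1$) and strict convexity of $\psi$ on $(0,1]$ is sound, including the case $\rho=0$.
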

\begin{proof}
Let us prove $(i)$. We first observe that since $x(t)$ is non-increasing, $x(0) =1$ and $x'(0) < 0$, we have that for all $t > 0$, $x(t) < 1$. Also, if $x(t) = \rho$ for some $t >0$, then $x(s) = \rho$ for all $s \geq t$ (since $x$ is non-increasing and has limit $\rho$). However $y \equiv \rho$ being a distinct solution of \eqref{eq:odex0},  $x$ and $y$ cannot coincide on an interval. We thus have for all $t >0$,  $\rho < x(t) < 1$.

We now prove $(ii)$. Assume that there is a time $t > 0$ such that $x'(t) = 0$. Then, from  \eqref{eq:odex0} and $\rho < x(t) < 1$, we deduce that $x'' (t) > 0$. In particular, $x'(s) > 0$ for all $s \in (t , t + \delta)$ for some $\delta > 0$. This contradicts that $x(\cdot)$ is non-increasing. Also, from \eqref{eq:xprime0}, for any $t \geq 0$, $\lambda x(t) + x'(t) > 0$. Since $x(t) \leq 1$, we deduce that for all $t \geq 0$, $
- \lambda <   x'(t) <  0$. \qed
\end{proof}
We define $X(t) = (x (t) , x'(t))$  and $$F (x_1 , x_2)  = ( x_2 , ( 1 - \lambda ) x_2 - \varphi (x_1) ) $$ so that
\begin{equation}\label{eq:defFX0}
X ' = F (X).
\end{equation}
We define the trajectory $\Phi = \{ X(t) : t \geq 0 \}$. Recall that $\rho  = \lim_{ t \to \infty} x(t)$. Also, since for all $t \geq 0$, $X(t)'_1 = F(X(t))_1 < 0$, $\Phi$ is the graph of a differentiable function $ f : (\rho,1] \to (-1,0)$ with $f(1) =  x'(0) <  0$,
$$
\Phi = \{ (s,f(s)) : s \in (\rho,1] \}.
$$
Moreover
\begin{equation}\label{eq:fflow0}
f'(s) =   \frac{ F((s,f(s)))_2 }{ F((s,f(s)))_1} =   1 - \lambda - \frac{ \varphi(s) } {  f(s)  }.
\end{equation}

\begin{figure}
\centering \scalebox{0.6}{\input{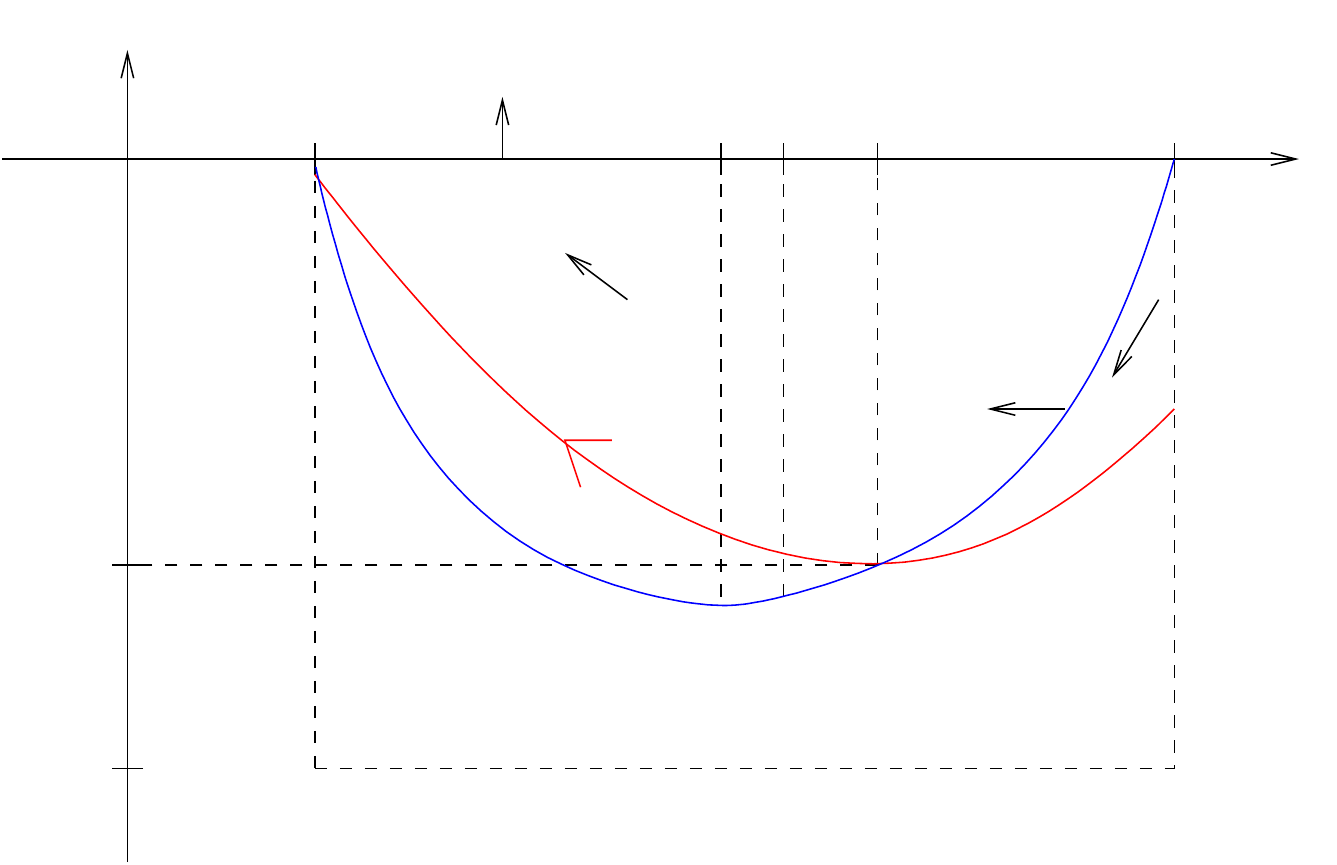_t}}
\caption{Illustration of the phase portrait. In blue, the curve $\Gamma$, in red the curve $\Phi$.}\label{fig:pp}
\end{figure}

We notice that on the curve  $$\Gamma = \{ (x_1, x_2 ) \in [\rho, 1]\times [-1, 0] :  ( 1 - \lambda ) x_2 = \varphi (x_1) \}$$ the second coordinate of $F$ vanishes (see figure \ref{fig:pp}). The next lemma shows that our function $(x(t), x'(t))$ cannot cross $\Gamma$ near its origin $(x(0), x'(0))$.
\begin{lemma}\label{le:compode00}
There exists $\delta > 0$ depending only on $\psi$ and $u$ defined in \eqref{eq:rangelambda} such that  the following holds.  Let  $x \in \cH$ such that  $x = A( x)$. If $(x(t), x'(t) ) \in \Gamma$ for some $t > 0$, then $x'(t)   \leq  -  \delta$ and $x(t) \leq 1 - \delta$.
\end{lemma}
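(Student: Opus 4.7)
My plan is to rule out intersections of the trajectory with $\Gamma$ in small neighborhoods of the two points $(\rho,0)$ and $(1,0)$ where $\Gamma$ meets the $x$-axis; away from those neighborhoods, both $x$ and $|x'|$ are uniformly bounded away from the forbidden values on $\Gamma$.

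Near $(\rho,0)$ I set $y=x-\rho$ and linearize \eqref{eq:odex0} as $y''-(1-\lambda)y'+\varphi'(\rho)y=o(y)$, with $\varphi'(\rho)=\lambda(\psi'(\rho)-1)<0$. The characteristic roots $\alpha_\pm=\frac{1}{2}[(1-\lambda)\pm\sqrt{(1-\lambda)^2-4\varphi'(\rho)}]$ are real with $\alpha_-<0<\alpha_+$, $\alpha_++\alpha_-=1-\lambda$, and $\alpha_+\alpha_-=\varphi'(\rho)$. Since $x\in\cH$ converges to $\rho$, the trajectory lies on the stable manifold of the linearization, so $x'(t)/(x(t)-\rho)\to\alpha_-$. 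On $\Gamma$ the same ratio tends to $\varphi'(\rho)/(1-\lambda)=\alpha_+\alpha_-/(\alpha_++\alpha_-)$, and the gap
\begin{equation*}
\alpha_--\frac{\alpha_+\alpha_-}{\alpha_++\alpha_-}=\frac{\alpha_-^2}{1-\lambda}>0
\end{equation*}
is uniformly bounded below on $\lambda\in(\lambda_1,1-u)$, because $|\varphi'(\rho)|\geq\lambda_1(1-\psi'(\rho))>0$ depends only on $\psi$. Writing the trajectory as the graph $x_2=f(x_1)$, this translates into $f(s)-\Gamma(s)\geq c_\rho(s-\rho)$ on $s\in(\rho,\rho+\eta_\rho]$ for uniform $c_\rho,\eta_\rho>0$. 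Any intersection with $\Gamma$ therefore occurs at $x(t)\geq\rho+\eta_\rho$, and there $|x'(t)|=|\varphi(x(t))|/(1-\lambda)\geq\delta_\rho$ uniformly.

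Near $(1,0)$ I pass to coordinates $(v,w)=(1-x,-x')$, so the trajectory starts at $(0,\gamma)$ with $\gamma=-x'(0)>0$, and $\Gamma$ reads $w=-\varphi(1-v)/(1-\lambda)$, equal to $Kv$ to leading order with $K=\lambda(d-1)/(1-\lambda)$. Let $H=w-Kv$ measure the signed distance from $\Gamma$, so $H(0)=\gamma>0$; using $v'=w$ and $w'=(1-\lambda)w+\varphi(1-v)$ I compute
\begin{equation*}
H'=(1-\lambda-K)H-K^{2}v+\bigl[\varphi(1-v)+\lambda(d-1)v\bigr],
\end{equation*}
where the bracketed term is nonnegative by convexity of $\varphi$. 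Combined with the a priori bounds $w\in(0,1)$ and $v\in(0,1-\rho)$ from Lemma~\ref{le:Uyprime0}, a Gronwall-type estimate shows that so long as $H\geq 0$, $v(t)=\int_0^t w\,ds$ is controlled and $H(t)\geq\gamma e^{-Ct}-C'\int_0^t v\,ds$ for constants $C,C'$ depending only on $\psi,u$; hence $H$ cannot vanish before $v$ has reached a uniform threshold $\eta_1>0$. At any intersection we therefore have $v\geq\eta_1$, which gives $x(t)\leq 1-\eta_1$ and $|x'(t)|=Kv\geq K\eta_1$, both uniformly bounded below (since $K\geq\lambda_1(d-1)/(1-\lambda_1)$).

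The main obstacle is the regime $\lambda\downarrow\lambda_1$, where the discriminant $(1-\lambda)^2-4\lambda(d-1)$ of the linearization at $(1,0)$ vanishes, the roots become complex with imaginary part $\to 0$, and simultaneously $\gamma=-x'(0)\to 0$. Integrating the linearization alone would place the first crossing at $v$ proportional to $\gamma$ times an exponentially large factor, whose product with $\gamma$ is essentially the asymptotic that Theorem~\ref{th:RSext} asserts; the argument above avoids this circularity by working directly with $H$ and $v$, using convexity of $\varphi$ to keep $H$ positive while $v$ remains small, without invoking any lower bound on $\gamma$. Setting $\delta=\min(\eta_\rho,\eta_1,\delta_\rho,K\eta_1)$ concludes the proof.
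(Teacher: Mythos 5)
The decisive step of your argument fails at exactly the point you identify as the crux. In the analysis near $(1,0)$ everything rests on the sentence ``hence $H$ cannot vanish before $v$ has reached a uniform threshold $\eta_1>0$'', but the estimate you offer, $H(t)\ge \gamma e^{-Ct}-C'\int_0^t v\,ds$, cannot yield any $\gamma$-independent conclusion: if $H$ first vanishes at time $\tau$ it only gives $\int_0^\tau v\,ds\ge \gamma e^{-C\tau}/C'$, hence $v(\tau)\ge \gamma e^{-C\tau}/(C'\tau)$, which tends to $0$ as $\gamma\to 0$ or $\tau\to\infty$. Worse, the intermediate claim is simply false in the relevant regime: for $\lambda>\lambda_1$ the linearization of \eqref{eq:odex0} at $(1,0)$ is an unstable focus (the discriminant $(1-\lambda)^2-4\lambda(d-1)$ is negative), and by homogeneity of the linear flow a trajectory started at $(v,w)=(0,\gamma)$ first meets the tangent ray $w=Kv$ at a point whose $v$-coordinate is proportional to $\gamma$; the convex correction $\varphi(1-v)+\lambda(d-1)v=O(v^2)$ is negligible there, so $H$ does vanish at $v=O(\gamma)$. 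Since $\gamma=-x'(0)$ is precisely the exponentially small quantity as $\lambda\downarrow\lambda_1$ (that is the content of Theorem \ref{th:RSext}), no uniform $\eta_1$ exists for the first zero of $H$. What is true, and what has to be proved, is that the trajectory cannot reach the curve $\Gamma$ itself, which lies a quadratic gap below its tangent line, before $1-x$ is macroscopic; a Gronwall bound seeded by $H(0)=\gamma$ cannot see this, and some global information about the trajectory must enter. Note also that at an intersection with $\Gamma$ one has $|x'(t)|=-\varphi(x(t))/(1-\lambda)\le K(1-x(t))$, not equality, so your final lower bound on $|x'|$ is not the right formula even granting $v\ge\eta_1$.

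This is where the paper's proof does something genuinely different: it works at the crossing point rather than from $t=0$. Writing $\sigma$ for the largest abscissa of an intersection and $-\eta=f(\sigma)$ for the crossing height, it assumes $\eta$ small, launches an auxiliary solution of the same ODE from $(\beta,-\eta)$ with $\beta\in(\alpha,1)$ fixed, and shows by the elementary estimate $y''\ge\lambda b/2$ inside the region $\Gamma_+$ that this auxiliary trajectory would exit $\Gamma_+$ through the top $\{x_2=0\}$; on the other hand it must exit on the left, because it cannot cross the true trajectory, which never reaches $x'=0$ by Lemma \ref{le:Uyprime0}(ii). That non-crossing comparison with a translated solution is the missing ingredient, and it is what converts the qualitative picture into a bound $\eta\ge\lambda\delta_0/(1-\lambda)$, after which $\sigma\le\kappa^{-1}(\delta_0)$ follows from monotonicity of $\kappa(x)=x-\psi(x)$ on $[\alpha,1]$. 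Your near-$(\rho,0)$ step has a smaller but real gap of the same flavour: tangency of the stable manifold to the eigendirection of slope $\alpha_-$ gives $f(s)-\varphi(s)/(1-\lambda)\ge c_\rho(s-\rho)$ only for $s$ close enough to $\rho$, with ``close enough'' a priori depending on $\lambda$ and on the particular fixed point, so the uniform $\eta_\rho$ is asserted rather than proved; the paper avoids the issue altogether, since its proof (and every later use of the lemma) concerns only the first crossing, i.e.\ the largest $s$.
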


\begin{proof}
Define $\sigma$ as the largest $s$  such that $ (s , f(s) ) \in \Gamma$ (see figure \ref{fig:pp}). We have $\sigma < 1$. We should prove that $\sigma \leq 1 - \delta$ and $f(\sigma) \leq - \delta$. Recall that $f(1) = x'(0) < 0$. Thus, on $(\sigma,1]$, $(s,f(s))$ is below $\Gamma$ and it follows that $f$ is increasing. By construction, $f(\sigma) = \varphi(\sigma)/(1- \lambda) = \lambda ( \psi(\sigma) - \sigma)/(1- \lambda) $ and $f'(\sigma) = 0$. We deduce that 
\begin{equation}\label{eq:fsecond}
f''(\sigma) = -\frac{ \varphi'(\sigma)}{  f(\sigma) }+ \frac{f'(\sigma) \varphi(\sigma) } {f^2 (\sigma)} =  - \frac{\varphi'(\sigma) }{ f(\sigma)} \geq 0,
\end{equation}
where the last inequality comes from $f$ is increasing on  $[\sigma,1]$ and $f'(\sigma) = 0$.

We define $\alpha \in (\rho, 1)$ as the point where the function $\kappa(x) = x - \psi(x)$ reaches its maximum. Since $\varphi'(s) < 0$ on $[\rho, \alpha)$,  from \eqref{eq:fsecond} we find that $\sigma \in [\alpha, 1)$. We set $f(\sigma) = - \eta$. We will prove  that $\eta \geq \lambda \delta_0 / (1 - \lambda)$ for some $\delta_0 >0$ depending only on $u$ and $\psi$. This will conclude the proof of the lemma. Indeed, by construction $ ( 1 - \lambda) \eta = - \varphi( \sigma) = \lambda( \sigma - \psi(\sigma)) $. The function $\kappa(x) = x - \psi(x)$ has a continuous decreasing inverse in $[\alpha,1]$ with $\kappa^{-1} (0) = 1$.  Hence, $\sigma = \kappa^{-1} (( 1 - \lambda) \eta / \lambda)$ and, if $\eta \geq \lambda \delta_0 / ( 1- \lambda)$, we deduce that the statement of the lemma holds with $\delta = \min(\lambda_1 \delta_0 / ( 1 - \lambda_1), 1 - \kappa^{-1} (  \delta_0 ))$.

To this end, we fix any $\beta \in ( \alpha , 1)$, we set $ b =  \kappa(\beta) > 0$, and
\begin{equation}\label{eq:choicedelta0}
\delta_0 = \min \left( \frac b 2 ,   (1 - u) \sqrt{ \frac{ b ( \beta - \alpha)  }{ u }} \right). 
\end{equation}
We assume that $\eta < \lambda \delta_0 / ( 1- \lambda)$ and look for a contradiction. We first notice that $\delta_0 < b$ implies that $\sigma = \kappa^{-1} ( ( 1 -  \lambda )  \eta /  \lambda ) >  \kappa^{-1} ( b  ) = \beta$.

Consider the solution $Y(t) = ( y (t) , y'(t) )$ of the ODE \eqref{eq:defFX0} with initial condition $Y(0) = ( \beta , - \eta)$. The trajectory of $Y(t)$ is denoted by $\tilde \Phi = \{ Y(t ) : t \geq 0\}$. We define the set $\Gamma_+ = \{( x_1, x_2 ) \in [\alpha, 1)\times [ -\eta , 0) :( 1 - \lambda ) x_2 \geq \varphi (x_1) \}$. On $\Gamma_+$, $F(x) _1 < 0 $ and  $F(x) _2 \geq 0$. It follows that the trajectories $\Phi$ and $\tilde \Phi$ exit $\Gamma_+$ either on its left side  $\{(\alpha, x_2) , x_2 \in [-\eta , 0] \}$ or its upper side $\{(x_1, 0) , x_1 \in [\alpha , 1) \}$. However, Lemma \ref{le:Uyprime0}$(ii)$ implies that $\Phi$ exits $\Gamma_+$ on the left side. Since $\Phi$ and $\tilde \Phi$ cannot intersect and $\tilde \Phi$ is on the left side of $\Phi$  in $\Gamma_+$ (since $\beta < \sigma$), we deduce that necessarily, $\tilde \Phi$ also exits $\Gamma_+$ on the left side. We now check that, with our choice of $\delta_0$ in \eqref{eq:choicedelta0}, it contradicts $\eta < \lambda \delta_0 / ( 1- \lambda)$.

Define $\tau >0$ as the exit time of $Y(t)$ from $\Gamma_+$. If $0 \leq t \leq \tau$, using that $\varphi$ is increasing on $[\alpha, \beta]$, we find 
$$y'' (t)  \geq - ( 1 - \lambda) \eta -  \varphi( \beta )  \geq  - \lambda \delta_0 + \lambda b \geq \lambda b /2, 
$$ since $\delta_0 \leq b /2$. We deduce that for all $t \in [0, \tau]$, $y' (t) \geq z'(t) $ and $y(t) \geq z(t) $ with $z(t) =  (\lambda b/4) t^2 - \eta t + \beta$. We set $t_e = 2 \eta / (\lambda b)$. Since $z'(t_e) = 0$, we have $\tau \leq t_e$. Also $z$ being decreasing on $[0,t_e]$, we have $y(\tau) \geq z(t_e) = - \eta^2 / ( \lambda b ) + \beta$. Thanks to \eqref{eq:choicedelta0},  $\eta^2 < \lambda^2 \delta_0^2 / (1-\lambda)^2 < \lambda b ( \beta - \alpha)$ and we deduce that $ y(\tau) \geq - \eta^2 / ( \lambda b ) + \beta > \alpha$. In particular, $\tilde \Phi$ exits $\Gamma_+$ on the upper side. It leads to a contradiction. We have thus proved that $\eta \geq \lambda \delta_0 / (1 - \lambda)$.  \qed\end{proof}

\subsection{Comparison of second order differential equations}

For two  functions $\varphi_1, \varphi_2$ on $[0,1]$, we write $\varphi_1 \leq \varphi_2$ if for all $ t \in [0,1]$, $\varphi_1 ( t) \leq \varphi_2 (t)$. The next lemma is proved as Lemma \ref{le:compode}, we omit its proof.
\begin{lemma}\label{le:compode0}
Let $\delta > 0$ be as in Lemma \ref{le:compode00}. Let  $x \in \cH$ such that  $x = A x$. Let $\tilde \varphi$ be a Lipshitz-continuous function and $y$ be solution of $y'' - ( 1 - \lambda ) y' + \tilde \varphi(y) =0$ with $y (0) = 1$, $y'(0) < 0$. We define the exit times
$$
T = \inf\{ t \geq 0 : (y(t) , y'(t) ) \notin [0,1] \times [-1, 0]  \},
$$
$$T_-= \inf\{ t \geq 0 : y'(t) \leq  -1  \} \quad  \hbox{ and } \quad T_+ = \inf\{ t \geq 0 : ( 1 - \lambda) y'(t) =  \varphi ( y(t) )  , �y(t)  \geq 1 - \delta \}.$$
\begin{enumerate}[(i)]
\item\label{odei}
If $T_+  <  T  < \infty$  and $\tilde \varphi \geq  \varphi $ then $y'(0)   \geq  x'(0)$.

\item\label{odeii}
If $T_-  =   T  < \infty$ and $\tilde \varphi  \leq \varphi$ then $x'(0)  \geq y'(0)$.
\end{enumerate}
\end{lemma}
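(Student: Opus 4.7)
The plan is to adapt the phase-portrait contradiction argument from the proof of Lemma~\ref{le:compode}, with Lemma~\ref{le:compode00} playing the essential new role. Since $y$ is strictly decreasing on $[0,T)$, I would represent the trajectory $\Psi = \{(y(t), y'(t)) : t \in [0,T)\}$ as the graph of a differentiable function $g$ on $(y(T), 1]$, with $g(1) = y'(0) < 0$ and, by the analog of \eqref{eq:fflow0},
\[
g'(s) = 1 - \lambda - \frac{\tilde\varphi(s)}{g(s)}.
\]
Recall that $\Phi$ is the graph of $f : (\rho, 1] \to (-1, 0)$ with $f(1) = x'(0) < 0$.

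For \eqref{odei}, I would argue by contradiction and assume $y'(0) < x'(0)$, so that $g(1) < f(1) < 0$. Since $T_+ < T$, there exists $s^* \geq 1 - \delta$ with $(1 - \lambda)\, g(s^*) = \varphi(s^*)$, i.e.\ $\Psi$ meets $\Gamma$ at a point with first coordinate at least $1 - \delta$. By Lemma~\ref{le:compode00}, $\Phi$ cannot meet $\Gamma$ in this region; combined with $f(1) < 0 = \varphi(1)/(1-\lambda)$, this forces $f(s) < \varphi(s)/(1 - \lambda)$ throughout $[1 - \delta, 1]$, and in particular $f(s^*) < g(s^*)$. The intermediate value theorem applied to $h := g - f$ then produces a largest $s_1 \in (s^*, 1)$ with $g(s_1) = f(s_1)$, and $g < f < 0$ on $(s_1, 1]$. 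On this interval $\varphi(s) < 0$, so combining $g < f < 0$ with $\tilde\varphi \geq \varphi$ yields $\tilde\varphi(s)/g(s) < \varphi(s)/f(s)$, hence $g'(s) > f'(s)$ strictly. Integrating on $[s_1, 1]$ and using $g(s_1) = f(s_1)$ gives $g(1) - f(1) > 0$, contradicting $g(1) < f(1)$.

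For \eqref{odeii}, the condition $T_- = T$ forces $\Psi$ to exit the strip $[0,1] \times [-1, 0]$ through the bottom edge $\{x_2 = -1\}$, whereas Lemma~\ref{le:Uyprime0}\emph{(ii)} confines $\Phi$ strictly above this edge. Assuming $y'(0) > x'(0)$ for contradiction, the two graphs must cross, and a symmetric derivative comparison (now with $\tilde\varphi \leq \varphi$) gives $g'(s) < f'(s)$ beyond the last crossing and yields, after integration, the reverse contradiction.

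The main subtlety compared with Lemma~\ref{le:compode} is that $\varphi$ changes sign on $(\rho, 1)$ and the trajectories live in the half-plane $\{x_2 < 0\}$, so the sign flips when dividing by $f$ or $g$. Lemma~\ref{le:compode00} is precisely what is needed to locate the crossing point in a region where these signs can be tracked consistently, and is the key new ingredient beyond the BA case.
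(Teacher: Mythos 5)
Your argument is correct and is essentially the adaptation the paper intends: the paper omits the proof, stating it is proved as Lemma~\ref{le:compode}, and your phase-portrait contradiction (graph representation of both trajectories, Lemma~\ref{le:compode00} plus the intermediate value theorem to produce the last crossing, then the derivative comparison $g'>f'$ resp.\ $g'<f'$ and integration) is exactly that adaptation, with the sign bookkeeping for $\varphi<0$ and $f,g<0$ handled correctly. Nothing essential is missing beyond the same harmless glossing (strict monotonicity of $y$ before the exit time) that the paper itself makes.
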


\subsection{Proof of Theorem~\ref{th:RSext}}

The proof of Theorem~\ref{th:RSext} follows now closely the proof of Theorem  \ref{th:mainex}. We first linearize \eqref{eq:odex0}  in the neighborhood of $\lambda_1$.

\paragraph{Step one : linearization from below.}

We have $\varphi ( 1) = 0$,  $\varphi'(1) = \lambda (d -1 ) > 0$, and from the convexity of $\varphi$,
\begin{equation}\label{eq:phiLB0}
\varphi(s) \geq \lambda (d -1 )  ( s -1 )  .
\end{equation}
We take $\lambda_1 < \lambda < 1 $ and consider the linearized differential equation
\begin{equation}\label{eq:odey0}
y'' -  (1 - \lambda ) y ' + \lambda (d -1 ) (y -1 ) = 0.
\end{equation}
The solutions of this differential equation are
$$
y ( t ) =  1 + a \sin ( \omega t ) e^{ \frac{(1 -\lambda ) t}{2}} + b \cos ( \omega t ) e^{\frac{(1 -\lambda ) t}{2}},
$$
where
$$
\omega =\frac 1 2 \sqrt {  - \lambda^2 + 2  (2 d -1 ) \lambda  - 1 } =c (\lambda) \sqrt { \lambda  -\lambda_1 },
$$
and
$$
c (\lambda ) =\frac 1 2  \sqrt { ( 2d -1 ) + 2 \sqrt {d (d -1)  }  - \lambda } =   \left( d (d -1)  \right)^{1/4} + O ( | \lambda - \lambda_1 | ) .
$$
We use this ODE to bound from below $x' (0)$ if $A(x) = x$.
\begin{lemma}\label{le:UBx0}
The exists a constant $c_0 > 0$ such that for any $\lambda_1 < \lambda < 1 $, if $x \in \cH$ satisfies $A(x) = x$ then 
$$
x'(0) \geq  -  c_0  e^{ - \frac{ \pi ( 1 - \lambda) }{2 \omega} }    (  1+ O (\omega^2)).
$$
\end{lemma}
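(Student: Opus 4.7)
The plan parallels the proof of Lemma \ref{le:UBx} for the BA process, with $\varphi$ now linearized at the rightmost fixed point $s=1$ rather than at $s=0$.

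\textbf{Step 1: Linearization from below.} Since $\varphi$ is convex with $\varphi(1)=0$ and $\varphi'(1)=\lambda(d-1)>0$, the tangent line at $s=1$ lies below $\varphi$:
\[
\tilde\varphi(s):=\lambda(d-1)(s-1)\ \leq\ \varphi(s)\qquad\text{for all }s\in[0,1].
\]
This meets the hypothesis $\tilde\varphi\leq\varphi$ of Lemma \ref{le:compode0}\eqref{odeii}.

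\textbf{Step 2: Explicit linearized solution.} The ODE $y''-(1-\lambda)y'+\lambda(d-1)(y-1)=0$ with $y(0)=1$ and $y'(0)=a\omega<0$ has the solution
\[
y(t)=1+a\sin(\omega t)\,e^{(1-\lambda)t/2},\qquad y'(t)=ae^{(1-\lambda)t/2}\Bigl[\omega\cos(\omega t)+\tfrac{1-\lambda}{2}\sin(\omega t)\Bigr].
\]
Let $\tau$ be the first positive zero of $y''$; the equation $(1-\lambda)\omega\cos(\omega\tau)=[\omega^2-(1-\lambda)^2/4]\sin(\omega\tau)$ and $\omega\to 0$ yield
\[
\tau=\frac{\pi}{\omega}-\frac{4}{1-\lambda}+O(\omega^2).
\]
At this time $y'$ attains its minimum, and using $\cos(\omega\tau)=-1+O(\omega^2)$, $\sin(\omega\tau)=\frac{4\omega}{1-\lambda}+O(\omega^3)$ and $e^{(1-\lambda)\tau/2}=e^{(1-\lambda)\pi/(2\omega)-2}(1+O(\omega^2))$ gives
\[
y'(\tau)=a\omega\,e^{(1-\lambda)\pi/(2\omega)-2}\bigl(1+O(\omega^2)\bigr).
\]
Solving $y'(\tau)=-1$ selects $|a|\omega=e^{2-(1-\lambda)\pi/(2\omega)}(1+O(\omega^2))$.

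\textbf{Step 3: Applying the comparison lemma.} I must verify that $(y(t),y'(t))\in[0,1]\times[-1,0]$ for $t\in[0,\tau]$ with first exit through the bottom side, so that $T_-=T=\tau<\infty$. Since $y''\leq 0$ on $[0,\tau]$, $y'$ decreases monotonically from $a\omega$ to $-1$, staying in $[-1,0]$; and since $y'<0$ there, $y$ decreases monotonically from $1$. The vertical requirement $y(t)\in[0,1]$ thus reduces to showing $y(\tau)\geq 0$. Once this is secured, Lemma \ref{le:compode0}\eqref{odeii} gives $x'(0)\geq y'(0)=a\omega$, which is exactly
\[
x'(0)\geq -e^{2}\,e^{-(1-\lambda)\pi/(2\omega)}\bigl(1+O(\omega^2)\bigr),
\]
and the lemma follows with $c_0=e^2$.

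\textbf{Main obstacle.} The delicate point is verifying $y(\tau)\geq 0$: a direct evaluation with the calibrated $a$ yields $y(\tau)=1-4/(1-\lambda)+O(\omega^2)$, which is negative for every $\lambda\in(\lambda_1,1)$. Hence the pure tangent linearization does not satisfy $T=T_-$ by itself, and the argument must exploit an additional ingredient: either replacing $\tilde\varphi$ on a small interval $[0,s_0]$ (away from $s=1$) by a steeper affine piece still bounded above by $\varphi$, in order to steer the perturbed linear trajectory away from the left side of the box before it reaches $y=0$; or invoking the geometric information in Lemma \ref{le:compode00} (via the constant $\delta>0$) to localize the comparison to a neighborhood of $(1,0)$ in the phase plane. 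Such a local modification must be made while preserving the asymptotics $|y'(0)|=e^2e^{-(1-\lambda)\pi/(2\omega)}(1+O(\omega^2))$, which come entirely from the oscillating-exponential regime close to $y=1$ and are insensitive to the form of $\tilde\varphi$ far from $s=1$.
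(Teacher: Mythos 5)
You follow the paper's route exactly: the tangent bound \eqref{eq:phiLB0}, the solution $y(t)=1+a\sin(\omega t)e^{(1-\lambda)t/2}$, the time $\tau$ where $y''$ vanishes, the calibration $y'(\tau)=-1$, and the appeal to Lemma \ref{le:compode0}(ii); your asymptotics agree with the paper's (your constant $e^{2}$ is in fact the correct one, the paper's $e^{-2/b}$ being an immaterial slip absorbed into $c_0$). The obstacle you flag is genuine, and it is sharper than you state: exactly, $y(\tau)=1-2b/(b^{2}+\omega^{2})=1-(1-\lambda)/(\lambda(d-1))$, which is negative precisely when $\lambda<1/d$, and $\lambda_1<1/d$ for every $d>1$; so in the whole near-critical range the trajectory leaves $[0,1]\times[-1,0]$ through $\{y=0\}$ strictly before $y'$ reaches $-1$, i.e. $T<T_-$, and the hypothesis $T_-=T$ of Lemma \ref{le:compode0}(ii) is not satisfied. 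The paper's one-line conclusion (``use \eqref{eq:phiLB0} with Lemma \ref{le:compode0}(ii) and $\tau=T_-$'') does not check this point, so you have identified a real lacuna in the write-up rather than a defect of your own computation. Moreover your first suggested remedy cannot work: for any comparison with $\tilde\varphi\le\varphi\le0$ on $[\rho,1]$ one has, along the graph $g$ of the $y$-trajectory, $g'(s)=1-\lambda-\tilde\varphi(s)/g(s)\le 1-\lambda$, hence $g(s)\ge y'(0)-(1-\lambda)(1-s)>-1$ for all $s\in[\rho,1]$ once $|y'(0)|$ is exponentially small; so when $\rho=0$ (e.g.\ the $d$-ary tree) no admissible choice of $\tilde\varphi$ can force a bottom-side exit at all --- making $\tilde\varphi$ more negative only raises $y''$ and pushes $y'$ away from $-1$.

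What is missing, then, is the comparison step itself, and neither of your sketched fixes supplies it; as it stands your proposal (like the paper's two-line finish) does not prove the lemma. The statement does hold, but the repair is to rerun the omitted phase-plane proof of Lemma \ref{le:compode0}(ii) rather than to modify $y$. Suppose $y'(0)>x'(0)$ and let $f$ and $g$ be the graphs of the $x$- and $y$-trajectories over the abscissa. Since $x''$ is bounded and $x(t)\downarrow\rho$, one gets $x'(t)\to0$, so $f(s)\to0$ as $s\downarrow\rho$; on the other hand $y'$ decreases on $[0,\tau]$ and $y(\tau)<0\le\rho$, so $g$ is defined on all of $(\rho,1]$ and, for $s$ near $\rho$, stays below the fixed negative value $g(s_0)$ for any chosen $s_0\in(\rho,1)$. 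Hence $g<f$ somewhere near $\rho$ while $g(1)>f(1)$, giving a largest crossing $s_1\in(\rho,1)$ with $g>f$ on $(s_1,1]$; there $\tilde\varphi\le\varphi\le0$ and $f<g<0$, so $g'-f'=\varphi/f-\tilde\varphi/g<0$, and integrating over $[s_1,1]$ contradicts $g(1)>f(1)$. This yields $x'(0)\ge y'(0)=a\omega$ with your calibrated $a$, i.e.\ the lemma with $c_0=e^{2}$ up to the $1+O(\omega^{2})$ factor (for $\lambda$ bounded away from $\lambda_1$ the bound is trivial since $x'(0)\ge-1$ by Lemma \ref{le:Uyprime0}). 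Note that Lemma \ref{le:compode00} and its $\delta$ are the input needed for part (i), not here; the missing ingredient for (ii) sits at the other end of the trajectory, near the saddle $(\rho,0)$.
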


\begin{proof}
We can assume without loss of generality that $\lambda$ satisfies \eqref{eq:rangelambda}. Let $a  < 0$, $b =  (1 -\lambda) /2$, and consider the function $$y (t) =  1 +  a \sin ( \omega t ) e^{ bt }.$$ We have $y(0) = 1$, $y'(0) = a \omega$,
$$y'(t) =   a e^{ b t} (  \omega \cos (\omega t) + b \sin ( \omega t ) ), $$ and
$$
y''(t) = a e^{b t} \PAR{  2 b  \omega \cos (\omega t ) + \PAR{ b ^2  - \omega^2 } \sin (\omega t) }.
$$
Define
$$
\tau = \frac {\pi}{\omega} -  \frac 1 {\omega}
\arctan \PAR{ \frac { 2 b \omega}{ b ^2 - \omega^2}} = \frac {\pi}{\omega} - \frac{2}{b}  + O (\omega^2).
$$
On the interval $(0, \tau)$, $y''(t) < 0$ and $y''(\tau) = 0$. Thus the function $y'(t)$ is decreasing on $[0,\tau]$ and
$$
y'(\tau) =  e^{ - \frac{ 2}{ b }}  a e^{\frac{ \pi b  }{ \omega} } (  \omega + O (\omega^3)).
$$
Hence, we may choose $a$ such that $y'(\tau) =  -1 $ with
$$
a =  -  \omega^{-1} e^{ \frac{ 2}{ b }}    e^{ - \frac{ \pi  b }{ \omega} }  (  1+ O (\omega^2)).
$$
It remains to use \eqref{eq:phiLB0} with Lemma \ref{le:compode0}$(ii)$ and $\tau = T_-$.
\qed\end{proof}

\paragraph{Step two : linearization from above.}

For $ 0 < \eta < \min ( 1 , c (\lambda) / (d-1) ) $, we define
$$
\ell =  ( 1 - \eta ) \lambda + \eta \lambda_1  < \lambda,
$$
and the Lipschitz-continuous function
$$
\tilde \varphi ( s) = \max \left(  \varphi  (s) ,  \ell ( d - 1) ( s - 1) \right).
$$
In particular
\begin{equation}\label{eq:phiUB0}
\varphi  \leq  \tilde \varphi.
\end{equation}
We define the linear differential equation
\begin{equation}\label{eq:odey20}
y'' - ( 1 - \lambda ) y ' + \ell (d - 1) (y  - 1) = 0.
\end{equation}
The solutions of \eqref{eq:odey20} are
$$
y ( t ) =  1 + a \sin ( \omega' t ) e^{ \frac{(1 -\lambda ) t}{2}} + b \cos ( \omega' t ) e^{\frac{(1 -\lambda ) t}{2}},
$$
with
$$
\omega'  =\frac 1 2 \sqrt {  - \lambda^2 + 2  (2 d -1 ) \lambda  - 1 - 4 \eta ( d- 1) ( \lambda - \lambda_1)  } = \omega \sqrt { 1 - \frac{ \eta  ( d-1) }{ c( \lambda) }}.
$$
In the sequel, $o(1)$ denotes a function which goes to $0$ as $\omega$ goes to $0$.
\begin{lemma}\label{le:LBx0}
If $P$ has finite second moment, then there exists a constant $c_1 > 0$ such that for all $\lambda_1 < \lambda < 1$,  if $x \in \cH$ satisfies $A(x) = x$ then 

$$
x' (0) \leq  - c_1  \omega^3  e^{ - \frac{ \pi  ( 1 - \lambda) }{ 2 \omega  } }  (  1+ o (1)).
$$
\end{lemma}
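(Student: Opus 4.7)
\emph{Plan of proof.} The argument parallels Lemma~\ref{le:LBx}, with one crucial extra ingredient: a second-order Taylor expansion of $\varphi$ at the fixed point $s = 1$, which supplies the $\omega^3$ prefactor and is the only place where the finite second moment hypothesis on $P$ is used. I would start with the ansatz
\begin{equation*}
y(t) = 1 + a \sin(\omega' t)\, e^{b t}, \qquad b = \tfrac{1}{2}(1-\lambda), \quad a < 0,
\end{equation*}
which solves the linearized equation~\eqref{eq:odey20} with $y(0) = 1$ and $y'(0) = a\omega'$. As in the proof of Lemma~\ref{le:UBx0}, the first time $\tau > 0$ with $y''(\tau) = 0$ satisfies $\tau = \pi/\omega' - 2/b + O(\omega^2)$; on $[0, \tau]$, $y'$ is strictly decreasing and negative, and a short computation gives $y(\tau) - 1 = a (2\omega'/b)\, e^{\pi b/\omega' - 2}\, (1 + O(\omega^2))$.

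The pivotal step is to choose $a$ so that $y(\tau) = \sigma$, where $\sigma \in (\rho, 1)$ is the non-trivial solution of $\varphi(\sigma) = \ell(d-1)(\sigma - 1)$, i.e.\ the point where the envelope $\tilde\varphi$ switches from the line $\ell(d-1)(s-1)$ back to $\varphi$. Using $\varphi(1) = 0$, $\varphi'(1) = \lambda(d-1)$ and $\varphi''(1) = \lambda \psi''(1) < \infty$ (here the finite-second-moment hypothesis enters decisively), a Taylor expansion at $s = 1$ yields
\begin{equation*}
1 - \sigma = \frac{2(\lambda - \ell)(d-1)}{\lambda \psi''(1)}(1 + o(1)) = \frac{2\eta (d-1)}{\lambda_1 \psi''(1)\, c(\lambda_1)^2}\, \omega^2\, (1 + o(1)).
\end{equation*}
Taking $\eta = \alpha \omega$ with $\alpha > 0$ fixed then makes $1 - \sigma$ of order $\omega^3$, and solving for $a$ gives $|a| = O(\omega^2\, e^{-\pi b/\omega'})$. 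Since $\omega'/\omega = 1 + O(\omega)$, one has $1/\omega' = 1/\omega + O(1)$, so $e^{-\pi b/\omega'}$ equals $e^{-\pi b/\omega}$ up to a bounded multiplicative constant; consequently $y'(0) = a \omega'$ has the desired order $-\omega^3\, e^{-\pi (1-\lambda)/(2\omega)}$.

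To conclude I would apply Lemma~\ref{le:compode0}(i) with $T_+ = \tau$. Three checks are needed: (a) on the arc $y \in [\sigma, 1]$ one has $\tilde\varphi(y) = \ell(d-1)(y-1)$, so $y$ is genuinely a solution of $y'' - (1-\lambda) y' + \tilde\varphi(y) = 0$ throughout $[0, \tau]$; (b) the trajectory $(y(t), y'(t))$ stays in $[0,1] \times [-1, 0]$ on $[0, \tau]$, because $y$ decreases from $1$ to $\sigma = 1 - O(\omega^3)$ while $|y'|$ interpolates between the exponentially small $|a\omega'|$ and the $O(\omega^3)$ value at $\tau$; and (c) at time $\tau$ the identity $(1-\lambda) y'(\tau) = \ell(d-1)(\sigma - 1) = \varphi(\sigma)$ places the trajectory on $\Gamma$ with $y(\tau) = \sigma \geq 1 - \delta$ for small $\omega$. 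The comparison then delivers $x'(0) \leq y'(0)$, completing the bound.

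The main subtlety is the quadratic matching: precisely because $\varphi$ is truly non-linear at $s = 1$ (unlike the BA case, where the linear approximation near $s = 0$ captured everything to first order) the gap $1 - \sigma$ acquires an additional factor of $\omega$, producing the $\omega^3$ prefactor instead of the $\omega^2$ that a purely first-order matching would give; the finite second moment of $P$ is exactly what keeps the curvature $\psi''(1)$ finite and makes this matching meaningful.
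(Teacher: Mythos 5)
Your proposal is correct and follows essentially the same route as the paper: the same ansatz $y(t)=1+a\sin(\omega' t)e^{bt}$ for the linearized equation \eqref{eq:odey20}, the same second-order expansion of $\varphi$ at $s=1$ via $\psi''(1)$ (where the finite second moment enters), the same matching of $y(\tau)$ (with $y''(\tau)=0$) to the point where the line $\ell(d-1)(s-1)$ meets $\varphi$, and the same appeal to Lemma \ref{le:compode0}(i). Your choice $\eta=\alpha\omega$ with $\alpha>0$ fixed is just a reparametrization of the paper's $\kappa=1-\omega/(\pi b)$ and yields the same $\omega^3 e^{-\pi(1-\lambda)/(2\omega)}$ order up to the constant $c_1$.
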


\begin{proof}
We can assume without loss of generality that $\lambda$ satisfies \eqref{eq:rangelambda}. We set $$ b = \frac{ 1- \lambda}{ 2}  \quad  \hbox{  and } \quad \kappa = \sqrt { 1 - \frac{ \eta  ( d-1) }{ c( \lambda) }}.$$ We parametrize in terms of $\kappa$, so that
\begin{equation}\label{eq:elllambda0}
\ell = \lambda - ( 1 - \kappa^2) \omega^2  \quad \hbox{ and } \quad \omega' = \kappa \omega.
\end{equation}
For $a<  0$, we look at the solution $$y (t) = 1 +  a \sin ( \omega \kappa t ) e^{ b t }.$$
We have $y(0) = 1$, $y'(0) = a  \kappa \omega$.
$$y'(t) =  a e^{ b t} (  \omega \kappa \cos (\omega \kappa t) +  b  \sin ( \omega\kappa  t ) ).$$
We repeat the argument of Lemma \ref{le:UBx0}. On the interval $[0, \tau]$, $y''(t) \geq 0$ and $y''(\tau) = 0$, where
$$
\tau = \frac {\pi}{\omega \kappa} -  \frac 1 {\omega\kappa }
\arctan \PAR{ \frac { 2 b \omega\kappa }{b ^2 - \omega^2 \kappa^2}} = \frac {\pi}{\omega \kappa } - \frac{2}{b}  + O (\omega^2),
$$
and the $O(\cdot)$ is uniform over all $\kappa > 1/2$. The function $y'(t)$ is increasing on $[0,\tau]$ and
$$
y'(\tau) = e^{-2} a e^{\frac{ \pi  b }{ \omega \kappa } } (  \omega  \kappa + O (\omega^2)).
$$
Now, we have $\varphi  (s) <  \ell ( d - 1) ( s - 1) $ for all $ s \in [1 - \si, 1]$ with
$$
 - \ell (d-1) \si = \varphi ( 1 - \si) = \lambda ( \psi ( 1 - \si ) - 1 + \si ).
$$
If $P$ has finite second moment then, from Abel's Theorem, $\psi''$ is continuous on $[0,1]$. Also from Jensen's inequality, $\psi''(1) \geq d(d - 1) > 0$. We expand $\psi$ in a neighborhood of $1$, as $\omega \to 0$,  it leads to,
\begin{eqnarray*}
\si  & = & \frac{ 2 (d -1)}{ \psi'' (1) \lambda }  \PAR{ \lambda -  \ell   } ( 1+ o (1) ) \\
&  = & \frac{ 2 (d -1)}{ \psi'' (1) \lambda }  ( 1 - \kappa^2)
\omega^2 ( 1 + o (1) ),
\end{eqnarray*}
where $o(1)$ is uniform over all $0 < \kappa < 1$. In particular, for all $\omega$ small enough, $\si < \delta$ with $\delta$ as in Lemma \ref{le:compode00}. Also, from \eqref{eq:odey20}, for $t = \tau$, since $y''(\tau) = 0$, we have
$$
  \frac{  y (\tau) - 1 } { y'(\tau) }  =   \frac{  1- \lambda }{ \ell (d-1) } =     \frac{  2 b  }{ \ell (d-1) }.
$$
We may choose $a$ such that $y(\tau) =  1 - \si$ by setting
$$
a =  - \si  e^{2} \frac{ \ell ( d- 1) }{ 2 b   }  \frac{ e^{ - \frac{  \pi b }{ \omega \kappa} } }{ \omega \kappa}  (  1+ O ( \omega^2)) =  - e^{2}  \frac{ \ell ( d- 1)^2 }{  \lambda \psi'' (1) b   } e^{ - \frac{ \pi b }{ \omega \kappa} }  \frac{( 1 - \kappa^2) \omega }{ \kappa}( 1 + o (1)).
$$
By construction, with this choice of $a$, we have $(1 - \lambda) y'(\tau) = \varphi ( y (\tau))$.  Now, in the domain $1 - \si  \leq y \leq 1$ the non-linear differential equation $y'' - (1 - \lambda) y ' + \tilde \varphi (y)$
coincides with \eqref{eq:odey20}.  Thus,  using  \eqref{eq:phiUB0} and Lemma \ref{le:compode0}$(i)$ with $\tau = T_+$, we find that 
$$
x'(0)  \leq   y'(0) = -  e^{2}  \frac{ \ell ( d- 1)^2 }{ \lambda \psi''(1) b   } e^{ - \frac{ \pi b }{ \omega \kappa} } ( 1 - \kappa^2) \omega^2   (  1+ o ( 1)).
$$
We finally take $\kappa = 1 -  \omega / ( \pi b ) $ and use \eqref{eq:elllambda0}. It proves the lemma.
\qed\end{proof}

\paragraph{Step three : end of proof.}

We may now complete the proof of Theorem \ref{th:RSext}. We start with the left hand side
inequality. We first note that, by  Lemma \ref{le:compode00}, $x'(t)$ is decreasing on the interval $[0,t_0]$ where $t_0$ is the time where $(x(t_0),x'(t_0)) \in \Gamma = \{� (x_1, x_2 ) \in [\rho, 1]�\times [-1, 0] :  ( 1 - \lambda ) x_2 = \varphi (x_1) \}$.  Moreover by Lemma \ref{le:compode00}, we find $x(t_0) \leq 1 - \delta$. However, by Lemma \eqref{le:Uyprime0}$(ii)$, we have
$$
x(t ) \geq 1 - t.
$$
Hence $ t_0 \geq \delta$. Then, by construction, on the interval $[0,t_0]$, $$x(t) \leq 1 + x'(0) t =1 - |� x'(0) |  t  .$$

Since $\psi (x) \leq x$ on $[\rho, 1]$, it follows from \eqref{eq:qQ0} that the survival probability may be lower bounded as
\begin{eqnarray*}
1 - q(\lambda ) = \int_0 ^\infty ( 1 - \psi( x (t)  ) )  e^{ -t } dt & \geq  & \int_0 ^\infty ( 1 - x (t)  )  e^{ -t } dt  \\
& \geq  &  \int_0 ^{t_0} |� x' (0 )  |� t �  e^{ -t } dt \\
& \geq  & | x' (0) |   \int_0 ^{\delta}  t  e^{ -t } dt.
\end{eqnarray*}
It remains to use Lemma \ref{le:LBx0} and we obtain the left hand side of Theorem \ref{th:RSext}.

We turn to the right hand side inequality. For $X = (x_1, x_2) \in [\rho,1] \times (-\infty,0)$, define $G(X) = ( x_2, (1 - \lambda ) x_2 )$.  From
the definition of  $F$ in \eqref{eq:defFX0}, we have,  component-wise, for any $X \in [\rho,1] \times (-\infty,0)$,
$$
F(X) \geq G(X).
$$
Note also that $G$ is monotone : if component-wise $X \geq Y$ then $G(X) \geq G(Y)$. It follows that
if $X(0 ) = Y(0)$, $X' = F(X)$ and $Y' = G (Y)$ then component-wise
$$
X(t) \geq Y(t),
$$
(see e.g.  \cite[Exercise 4.6]{MR0171038}). Looking at the solution of $y'' - ( 1 - \lambda) y' = 0$ such that $y(0) = 1$ and $y'(0 ) = x'(0)$, we get that
$$
x(t) \geq 1 + x'(0) (e^{(1 - \lambda) t} - 1).
$$
We deduce from \eqref{eq:qQ0}-\eqref{eq:defx0} and the convexity of $\psi$ that,
\begin{align*}
q(\lambda)  & = \int_0 ^\infty \psi (  x (t)  ) e^{ - t } dt \\
& \geq \int_0 ^\infty \psi (  1 + x'(0) (e^{(1 - \lambda) t} - 1) ) e^{ -t } dt \\
& \geq  \int_0 ^\infty  \PAR{ 1 +  d x'(0) (e^{(1 - \lambda) t} - 1) } e^{ -t } dt \\
& \geq 1  + d x'(0) / \lambda.
\end{align*}
%Now, we notice that in order to prove  Theorem \ref{th:RSext},  by Lemma \ref{le:UBx0}, we may choose $\lambda$ close enough to $\lambda_1$ so that $ | x'(0) |  < 1$. We finally take $T = -  \ln   | x'(0)  | $ and
We finally apply Lemma \ref{le:UBx0} and this concludes the proof of Theorem \ref{th:RSext}. \qed

\section{Proofs of Theorems \ref{th:RStail}-\ref{th:RStail2}}
 \label{sec:RStail}

\subsection{Proof of Theorem \ref{th:RStail2}} \label{subsec:N1}

We are first going to find a recursive distributional equation (RDE) associated to the total progeny of the chase-escape process on a Galton-Watson tree. 

As already pointed, we can build the chase escape process on the tree $T^\downarrow$ thanks to i.i.d. $\EXP(\lambda)$ variables $(\xi_v)_{v \in V}$ and independent  i.i.d. $\EXP(1)$ variables $(D_v)_{v \in V}$. The variable $\xi_v$ (resp. $D_v$) is the time by which $v \in V$ will be infected (resp. recovered) once its ancestor is infected  (resp. recovered).  For $t \geq 0$, we define $Y(t)$ as the total number of recovered vertices when the process reach its absorbing state (without counting $o$) when we replace $D_{\so}$ by  $t$. The variable $Y(t)$ is the conditional variable $Z$ conditioned on the root is recovered at time $t$. By definition, if $D$ is an independent exponential variable with mean $1$,
then $$Z \stackrel{d}{=} Y(D),$$ where the
symbol $\stackrel{d}{=}$ stands for distributional equality.

In $T$, we denote the offsprings of the root by $\{1,\cdots,N\}$. The random variable  $N$ has distribution $P$. The root infects each of its offspring after an independent exponential variable with intensity $\lambda$.  Note that in $T$, the subtrees generated by each of the offsprings of the root are iid copies of $T$.  Hence, the recursive
structure of the tree $T$ leads to the
following equality in distribution
\begin{eqnarray}
\label{eq:RDE2}
Y(t) & \stackrel{d}{=}  & 1 +  \sum_{i = 1}^N \ind ( \xi_i  \leq  t)  Y_i ( t - \xi_i + D_i).
\end{eqnarray}
where $(\xi_i)_{i \in \dN}$ are iid exponential variables with intensity $\lambda$, $(Y_i)_{1 \leq i \leq N}$ and $(D_i)_{1 \leq i \leq N}$  are independent copies of $Y$ and $D$ respectively. Note that since all variables are non-negative, there is
no issue with the case $Y(t) = + \infty$ in the above RDE.

The RDE (\ref{eq:RDE2}) is the cornerstone of the argument. In the remainder of this subsection, we will use it to derive a linear second order ODE for the first moment of $Y(t)$. In the following subsection \S \ref{subsec:rec}, we will extend this exact computation to any integer moment. Finally, using convexity inequalities, we will push further the method and obtain sharp lower and upper bounds for any moment of $Y(t)$ (in \S \ref{subsec:RStailLB} and \S \ref{subsec:RStailUB}).

We start with a lemma
\begin{lemma}
\label{le:YNfinite} Let $t > 0$ and $u \geq 1$, if $\dE'_\lambda [ Z^u]<
\infty$ then $\dE' _\lambda [ Y(t)^u ]< \infty$.
\end{lemma}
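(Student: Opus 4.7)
The plan is to reduce the problem to a monotonicity statement for $Y(t)$ as a function of $t$ and then integrate out the exponential lifetime of the root.

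First, I would work under the natural coupling: the chase-escape process on $T^\downarrow$ is constructed from i.i.d.\ variables $(\xi_v)_{v \in V}$ and $(D_v)_{v \in V \setminus \{\so\}}$, and $Y(t)$ is obtained by taking $D_\so = t$ in this construction. The key observation is that, under this coupling, the random set $R(\infty)$ of ever-infected vertices is non-decreasing in $t$: if $t \le t'$, then making the root live longer can only lengthen the active lifetime of each already-infected child (which equals $t+D_i-\xi_i$ for the root's child $i$), and can only allow additional children with $\xi_i \in (t,t']$ to become infected. By induction (or by a standard monotone coupling argument for the full subtrees), this propagates through the tree, so $Y(t)$ is a.s.\ non-decreasing in $t$. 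In particular the map $t \mapsto \dE'_\lambda[Y(t)^u]$ is non-decreasing.

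Next I would use that $Z \stackrel{d}{=} Y(D)$ with $D \sim \EXP(1)$ independent of everything else, to write
\begin{equation*}
\dE'_\lambda[Z^u] \;=\; \int_0^\infty \dE'_\lambda[Y(s)^u]\, e^{-s}\,ds.
\end{equation*}
Combining this with the monotonicity of the integrand gives, for every fixed $t \ge 0$,
\begin{equation*}
\dE'_\lambda[Z^u] \;\ge\; \int_t^\infty \dE'_\lambda[Y(s)^u]\, e^{-s}\,ds \;\ge\; \dE'_\lambda[Y(t)^u] \int_t^\infty e^{-s}\,ds \;=\; e^{-t}\,\dE'_\lambda[Y(t)^u],
\end{equation*}
so that $\dE'_\lambda[Y(t)^u] \le e^{t}\,\dE'_\lambda[Z^u] < \infty$, which is the claim.

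The only step that requires real justification is the a.s.\ monotonicity of $Y(t)$ in $t$ under the coupling. Everything else is a direct application of Tonelli and the exponential representation of $Z$. Since the coupling is already present in the paper's construction of the process, the monotonicity is essentially a \emph{monotone coupling} argument, so I do not anticipate a genuine obstacle.
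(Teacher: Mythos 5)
Your proposal is correct and follows essentially the same route as the paper: both use $Z \stackrel{d}{=} Y(D)$ together with Fubini/Tonelli and the monotonicity of $t \mapsto Y(t)$ (for stochastic domination, which your coupling argument justifies). The only difference is cosmetic — the paper deduces finiteness of $\dE'_\lambda[Y(t)^u]$ for almost every $t$ and then invokes monotonicity, whereas you use monotonicity first to get the quantitative bound $\dE'_\lambda[Y(t)^u] \leq e^{t}\,\dE'_\lambda[Z^u]$.
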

\begin{proof}
Since $Z  \stackrel{d}{=}  Y(D)$, from Fubini's Theorem, $ \dE'_\lambda [�Z^u ]  =
\int_{0}^{\infty} \dE'_\lambda [ Y (t) ^u  ]e^{-t} dt$. Therefore $\dE'_\lambda [�Y (t) ^u ]� < \infty$ for almost all $t \geq 0$. Note however that since $t
\mapsto Y(t)$ is monotone for the stochastic domination, it implies that $\dE'_\lambda [ Y(t) ^u ]� < \infty$ for all $t \geq 0$. \qed\end{proof}

Now, assume that $\dE'_\lambda Z < \infty$. We may then take expectation in (\ref{eq:RDE2}):
\begin{eqnarray*}
\dE'_\lambda Y(t) & = & 1 +d \int_0 ^t \int_0^\infty  \dE'_\lambda [ Y (t - x + s)  ] e^{ -s } ds \lambda e^{ - \lambda x} dx.
\end{eqnarray*}
Let $f_1(t) = \dE'_\lambda Y(t)$, it satisfies the integral equation, for all $t \geq 0$,
\begin{equation}
\label{eq:g}
f_1(t) = 1+    \lambda d e^{-\lambda t} \int_0 ^t e^{(\lambda + 1) x} \int_x^\infty  f_1 (s) e^{ -s } ds  dx.
\end{equation}
Multiplying by $e^{\lambda t}$ and taking derivative, we get:
$$
(f'_1 (t) + \lambda f_1 (t)  ) e^{ \lambda t} = \lambda e^{\lambda t} +   \lambda d e^{(\lambda + 1) t} \int_t^\infty  f_1 (s) e^{ -s } ds.
$$
Then, multiplying by $e^{-(\lambda +1) t}$, taking the derivative a second time and  then re-multiplying  by $e^{t}$, we obtain:
$
f_1'' (t) - ( 1 - \lambda)  f_1' (t) - \lambda f_1  =  - \lambda  - \lambda d  f_1(t).
$
So, finally, $f_1$ solves a linear ordinary differential equation of the second order
\begin{eqnarray*}
x'' - (1- \lambda) x' + \lambda  (d- 1) x = -\lambda,
\end{eqnarray*}
with initial condition $f_1(0) = 1$. We get that $$f_1 (t) = x(t) - \frac {1}{d-1},$$ where $x(t)$ solves the ordinary differential equation
\begin{equation}
\label{eq:ODE1}
x'' - (1- \lambda) x' + \lambda  (d- 1) x = 0.
\end{equation} with $x(0) = d / (d-1)$. The discriminant of the polynomial $X^2 - (1- \lambda) X + \lambda  (d- 1)  = 0$ is $$ \Delta = \lambda^2 - 2 \lambda ( 2 d - 1) + 1.$$

If $0 < \lambda < \lambda_1$, the discriminant is positive. The roots of the polynomial are
\begin{equation}
\label{eq:alphabeta}
 \alpha = \frac{ 1 - \lambda - \sqrt \Delta}{ 2}   \quad  \hbox{ and } \quad \beta = \frac{ 1 - \lambda+ \sqrt \Delta}{  2}.
\end{equation}
The solutions of (\ref{eq:ODE1}) are
\begin{equation}\label{eq;xdefa}
x ( t )  =  \frac{ d }{d-1}  \left(  (1 - a) e^{\alpha t } + a e^{\beta t}\right)
\end{equation}
for some constant $a$.

Similarly, if $\lambda = \lambda_1$, then $\alpha = \beta = 1- d +   \sqrt {�d( d - 1) }$ and the solutions of  (\ref{eq:ODE1}) are
\begin{equation}\label{eq;xdefa2}
x ( t )  = \frac{ d }{d-1}  (at +1 ) e^{ \alpha t  }.
\end{equation}
For $0 < \lambda \leq \lambda_1$, we check easily that the functions $x(\cdot)$ with $a \geq 0$ are the nonnegative solutions of the integral equation (\ref{eq:g}).

It remains to prove that if  $0 < \lambda \leq  \lambda_1$ then $\dE Z < \infty$ and
$$
f_{1}  ( t ) =  \frac{  d e^{\alpha t }  - 1 }{d-1}  .
$$
Indeed, we would get  $\dE Z = \int_0^ {\infty} \dE Y(t) e^{-t} dt =   \frac {d}{d-1} \frac{1}{1 - \alpha} - \frac{1}{d-1} $ as stated in Theorem \ref{th:RStail2}. To this end, define $T_n$ as the tree $T$ stopped at generation $n$. As above, we denote by $Y^{(n)} (t)$ the total number of recovered particles in $T_n$ when the root is recovered at time $t$ ($D_{\so}$ is replaced by $t$). As $n \to \infty$, $Y_n(t)$ is non-decreasing and converges to $Y(t)$. We have $Y^{(0)}(t) = 1$ and for all $n \geq 0$, as in RDE (\ref{eq:RDE2}),
\begin{eqnarray*}
\label{eq:RDE2n}
Y^{(n+1)}(t) & \stackrel{d}{=}  & 1 +  \sum_{i = 1}^N \ind ( \xi_i  \leq  t)  Y^{(n)}_i ( t - \xi_i + D_i) ,
\end{eqnarray*}
where  $Y^{(n)}_i,$ and $D_i$ are independent copies of $Y^{(n)}$ and $D$ respectively. Since $\dE N < \infty$, the expectation of the number of vertices in $T_n$ is finite. In particular, for all $n \geq 0$, $g_n  (t) = \dE Y^{(n)}(t) < \infty$ is bounded uniformly in $t$. Also, taking expectation in \eqref{eq:RDE2n}, we have for all $t \geq0$,
\begin{equation}
\label{eq:gn}
g_{n+1}(t) = 1+    \lambda d e^{-\lambda t} \int_0 ^t e^{(\lambda + 1) x} \int_x^\infty  g_n (s) e^{ -s } ds  dx = \Phi ( g_n) (t),
\end{equation}
where  $\Phi$ is the mapping
$$
\Phi : g \mapsto  1 + \lambda d e^{-\lambda t} \int_0 ^t e^{(\lambda + 1) x} \int_x^\infty  g (s) e^{ -s } ds  dx.
$$
It is easy to check that $\Phi$ is indeed a mapping from $\cH_1$ to $\cH_1$, where $\cH_1$ is the set of non-decreasing functions
$g : [0,\infty) \to [1,\infty)$ such that $\sup_{t \geq 0} g(t) e^{-\alpha t} < \infty$. Now, from what precedes the function
$$
h (t) = \frac{  d e^{\alpha t }  - 1 }{d-1}  .
$$
is a fixed point of $\Phi$.

Denote by $\leq$ the partial order on $\cH_1$ of point-wise domination: $g \leq f$ if for all $t \geq 0$, $g(t) \leq f(t)$. The mapping $\Phi$ is non-decreasing on $\cH_1$ for this partial order.  We notice that $g_0\leq h$ and $ g_0 \leq g_1$. Composing by $\Phi$, we obtain: $g_1 = \Phi  (g_0 ) \leq \Phi  (h) = h$ and $g_1 \leq g_2$. By recursion, it follows for any $n \geq 1$, $g_n \leq h$ and $(g_n)_{n \geq 0}$ is non-decreasing. By monotone convergence, for any $t \geq 0$, the limit $g (t) =  \lim_{n \to \infty}
g_n(t)$ exists and is bounded by $h(t)$. Also, since $ g_n  \leq h$, by dominated convergence, for any $t \geq 0$, $\lim_{n \to \infty} \Phi ( g_n ) (t) = \Phi ( g ) (t) $. Therefore $g$ solves the integral equation (\ref{eq:g}) and is
equal to $x - 1/(d-1)$ where $x$ is given by \eqref{eq;xdefa} (or \eqref{eq;xdefa2} if $\lambda = \lambda_1$) for some $a \geq 0$.  However, from what precedes, we get $x (t) \leq h (t) + 1/(d-1)$ and the only possibility is $a =0$ and $g (t) = h(t)$.

Finally, since $Y^{(n)}(t)$ is non-decreasing and converges to $Y(t)$, by monotone convergence we have that $f_1(t) = \dE Y(t) = \lim_{n \to \infty} \dE Y^{(n)}(t) = g(t)$. This concludes the proof of Theorem \ref{th:RStail2}. \qed

\subsection{Proof of Theorem \ref{th:RStail} for integer moments}

\label{subsec:rec}

For $0 < \lambda < \lambda_1$, we define
\begin{equation}
\label{eq:gamma} \overline \gamma =   \frac{ \lambda^2 - 2 d
\lambda +1 - (1 - \lambda) \sqrt{ \Delta }}{2\lambda (d -1)}  = \frac  \beta   \alpha,
\end{equation}
where $\alpha$ and $\beta$ are given by \eqref{eq:alphabeta}. The key property of $\og (\lambda) $ is that $(1 - \lambda)  u \alpha - \lambda (d-1)- u ^2 \alpha^2 >0$ if and only if $1 < u <\og (\lambda)$. We also note that if $u > 1$, $ u < \og$ is equivalent to
$\lambda \in (0,\lambda_u)$. We first state an important lemma. Let  $1 < u < \og$,  we define $\cH_u$, the set of measurable functions
$h : [0,\infty) \to [0,\infty)$ such that $\sup_{t \geq 0} h(t) e^{-u\alpha t} < \infty$. Let $L> 0$, we
define the mapping from $\cH_u$ to $\cH_u$,
$$
\Psi : h \mapsto   L e^{u \alpha t }   +    \lambda d e^{-\lambda t} \int_0 ^t e^{(\lambda + 1) x} \int_x^\infty  h (s) e^{ -s } ds  dx.
$$
In order to check that $\Psi$ is indeed a mapping from $\cH_u$ to
$\cH_u$, we use the fact that if $1 < u <\overline \gamma = \beta / \alpha $ then
$u \alpha < \beta < 1$.
\begin{lemma}
\label{le:powerupp2}
Let  $1 < u < \overline \gamma$ and $f \in \cH_u$ such that $f \leq \Psi(f)$. Then for all $t \geq 0$,
$$
f(t) \leq   \frac{   L (u \alpha  + \lambda)( 1  - u \alpha)    }{(1 - \lambda)  u\alpha - \lambda (d-1)- u^2 \alpha^2}  \, e ^{u \alpha t} .
$$
\end{lemma}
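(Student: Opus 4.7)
My plan is to exploit the monotonicity of $\Psi$ together with an explicit computation of its action on exponentials $t\mapsto c\,e^{u\alpha t}$, and then extract the bound by a self-improving argument on the smallest multiplicative constant.

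First I would observe that $\Psi$ is monotone in the pointwise order: if $g_1 \leq g_2$ then $\Psi(g_1) \leq \Psi(g_2)$, because $\Psi$ is built from a positive linear integral operator plus an additive term. Next, I would check the key algebraic inequality $u\alpha < 1$: since $u < \overline\gamma = \beta/\alpha$ we have $u\alpha < \beta$, and from $\alpha + \beta = 1 - \lambda$ with $\alpha,\beta > 0$ we get $\beta < 1$. This makes the inner integral $\int_x^\infty e^{(u\alpha - 1)s}\,ds$ convergent.

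Then I would compute $\Psi(c\,e^{u\alpha \cdot})(t)$ explicitly. The inner integral gives $\tfrac{c}{1-u\alpha}e^{(u\alpha-1)x}$, the outer integral gives $\tfrac{c}{(1-u\alpha)(u\alpha+\lambda)}(e^{(u\alpha+\lambda)t}-1)$, and multiplication by $\lambda d\, e^{-\lambda t}$ yields
\begin{equation*}
\Psi(c\,e^{u\alpha\cdot})(t) \;=\; L\,e^{u\alpha t} \;+\; \frac{\lambda d\,c}{(1-u\alpha)(u\alpha+\lambda)}\bigl(e^{u\alpha t} - e^{-\lambda t}\bigr) \;\leq\; \Bigl(L + \frac{\lambda d\,c}{(1-u\alpha)(u\alpha+\lambda)}\Bigr)e^{u\alpha t}.
\end{equation*}

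Now I would define $c^\star = \inf\{c > 0 : f(t)\leq c\,e^{u\alpha t}\text{ for all }t\geq 0\}$, which is finite because $f\in\cH_u$. Using $f \leq \Psi(f)$, monotonicity of $\Psi$, and the inequality above applied with $c = c^\star$, I get $f(t)\leq \bigl(L + \tfrac{\lambda d\,c^\star}{(1-u\alpha)(u\alpha+\lambda)}\bigr)e^{u\alpha t}$, hence by definition of $c^\star$,
\begin{equation*}
c^\star \;\leq\; L + \frac{\lambda d\,c^\star}{(1-u\alpha)(u\alpha+\lambda)}.
\end{equation*}
Solving for $c^\star$ — which is legitimate precisely because the condition $u < \overline\gamma$ is equivalent to $(1-\lambda)u\alpha - \lambda(d-1) - u^2\alpha^2 > 0$, i.e.\ to $(1-u\alpha)(u\alpha+\lambda) > \lambda d$, so the coefficient multiplying $c^\star$ on the right is $< 1$ — yields the stated constant.

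The only real obstacle is ensuring that $c^\star$ is indeed finite and that the minimal-constant argument is airtight (a priori one needs to handle the case where the infimum is not attained, but the inequality $f(t)\leq c\,e^{u\alpha t}$ for every $c > c^\star$ followed by passage to the limit $c\downarrow c^\star$ suffices). Verifying the positivity of the denominator via the quadratic characterization of $\overline\gamma$ recalled before the lemma is then routine algebra.
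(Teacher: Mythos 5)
Your proof is correct and takes essentially the same route as the paper: the same explicit evaluation of $\Psi$ on $c\,e^{u\alpha t}$ (the paper's formula \eqref{eq:defjefio}), the monotonicity of $\Psi$, and the observation that $u<\overline\gamma$ is exactly what makes the multiplier $\lambda d/\bigl((u\alpha+\lambda)(1-u\alpha)\bigr)$ strictly less than $1$. The only cosmetic difference is that the paper iterates $g_k=\Psi(g_{k-1})$ starting from $g_0=f$ and passes to the limit of the recursively defined constants, whereas you shortcut this with a best-constant self-improvement; both rest on the same contraction inequality, and your treatment of $c^\star$ is sound since $f\in\cH_u$ means $\sup_{t\geq 0} f(t)e^{-u\alpha t}<\infty$, so the optimal constant is finite and the bound $f(t)\leq c^\star e^{u\alpha t}$ holds pointwise.
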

\begin{proof} We set $g_0 = f$ and  for $k \geq
1$, we define $g_{k} = \Psi (g_{k-1})$. First, since $1 < u < \overline \gamma$ then $(u \alpha  + \lambda)( 1  - u \alpha) > \lambda d$. We use the formula for all $u \geq 0$ such that  $  u \alpha< 1$:
\begin{equation}\label{eq:defjefio}
 \lambda  e^{-\lambda t} \int_0 ^t e^{(\lambda + 1) x} \int_x^\infty  e^{ \alpha  u  s}  e^{ -s } ds  dx = \frac{ \lambda ( e^{ \alpha u t } - e^{- \lambda t } ) }{(   u\alpha + \lambda )   ( 1 -  u \alpha ) } .
\end{equation}
We deduce easily that if $ g_0 (t) \leq C_0 e^{u \alpha t} $ then $$g_1
(t) = \Psi (g_0) (t)  \leq  L e^{u \alpha t } +  \frac{ L \lambda d }{(u \alpha  + \lambda)( 1  - u \alpha) }  (e^{u \alpha t } -e^{-\lambda t}) \leq C_1  e^{u
\alpha t}, $$ with $C_1 =  L + \frac{ C_0 \lambda d }{(u \alpha  + \lambda)( 1  - u \alpha) }$. By recursion, we obtain that $\limsup_k g_k (t) \leq
C e^{u\alpha t}$, with $C =  L (u \alpha  + \lambda)( 1  - u \alpha) / ( (1 - \lambda)  u\alpha - \lambda (d-1)- u^2 \alpha^2 )  < \infty$.

We may now conclude the proof. Notice that $\Psi$ is monotone: if for all $t \geq 0$, $h_1 (t) \geq  h_2 (t)$ then for all $t \geq 0$, $\Psi (h_1) (t) \geq \Psi (h_2) (t)$. Hence, by recursion, from the assumption $f \leq \Psi(f) = g_1$, we deduce that for all integer $k \geq 1$,  $f \leq g_k$. It remains to take the limit in $k$. \qed\end{proof}

Now, let $p$ be an integer, and define $f_p (t) = \dE'_\lambda [ Y(t)^p ]$. The main result of this subsection is the following lemma.

\begin{lemma}
\label{le:rec}
Let $1 \leq p < \gamma_{P}$, if $\lambda \in (0,\lambda_p)$, then $f_p$ is finite and there exists a constant $C_p$ such that for all $t >0$ $$ f_p (t) \leq C_p  e^{p \alpha t}.$$
\end{lemma}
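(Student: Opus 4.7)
The plan is to proceed by induction on $p$, with base case $p=1$ already handled by Theorem~\ref{th:RStail2}. To avoid any a priori integrability issue, I would work with the truncated Galton--Watson tree $T_n$ of depth $n$ introduced in \S\ref{subsec:N1}, define $f_p^{(n)}(t) = \dE'_\lambda[Y^{(n)}(t)^p]$ (finite since $T_n$ has finitely many vertices a.s.\ on the event that we condition on), establish the bound uniformly in $n$, and pass to the limit by monotone convergence (using that $Y^{(n)}(t) \uparrow Y(t)$).

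The key computation is to expand the RDE
$$Y^{(n+1)}(t) \stackrel{d}{=} 1 + \sum_{i=1}^N Z_i^{(n)}, \qquad Z_i^{(n)} = \ind(\xi_i \leq t)\, Y_i^{(n)}(t-\xi_i + D_i),$$
raised to the $p$-th power via the multinomial formula, isolating the diagonal piece
$$\Bigl(1 + \sum_{i=1}^N Z_i^{(n)}\Bigr)^p = \sum_{i=1}^N \bigl(Z_i^{(n)}\bigr)^p + R^{(n)}(t),$$
where $R^{(n)}$ is a sum of products $Z_{i_1}^{a_1}\cdots Z_{i_j}^{a_j}$ with distinct $i_l$, $a_l\geq 1$, $\sum a_l \leq p$, and either $j\geq 2$ or $a_1<p$. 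Taking expectations and using the same change of variables as in \eqref{eq:g}, the diagonal contribution produces the operator $\lambda d e^{-\lambda t}\int_0^t e^{(\lambda+1)x}\int_x^\infty f_p^{(n)}(s) e^{-s}\,ds\,dx$ acting on $f_p^{(n)}$, while $\dE[R^{(n)}(t)]$ only involves moments $f_{a_l}^{(n)}$ with $a_l < p$ and factorial moments $\dE[N(N-1)\cdots(N-j+1)]$ with $j \leq p$. The factorial moments are finite because $p < \gamma_P$.

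By the induction hypothesis one has $f_{a_l}^{(n)}(u) \leq f_{a_l}(u) \leq C_{a_l} e^{a_l \alpha u}$ (uniformly in $n$). Computing each integral of the form \eqref{eq:defjefio} with exponent $a_l \alpha < p\alpha < \beta \leq 1$ (valid because $\lambda < \lambda_p$ means $p < \overline\gamma$, so every $a_l \alpha < 1$), each term in $\dE[R^{(n)}(t)]$ is bounded by a constant times $e^{(a_1 + \cdots + a_j)\alpha t} \leq e^{p\alpha t}$. Summing the finitely many contributions yields a constant $L$, independent of $n$, such that
$$f_p^{(n+1)}(t) \leq L e^{p\alpha t} + \lambda d e^{-\lambda t}\int_0^t e^{(\lambda+1)x}\int_x^\infty f_p^{(n)}(s) e^{-s}\,ds\,dx.$$

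Since $n \mapsto f_p^{(n)}$ is non-decreasing (by the obvious coupling), the pointwise limit $\tilde f_p$ exists and, by monotone convergence on both sides of the inequality above, satisfies $\tilde f_p \leq \Psi(\tilde f_p)$ with $\Psi$ as in Lemma~\ref{le:powerupp2} (for $u=p$ and the constant $L$). Applying that lemma, $\tilde f_p(t) \leq C_p e^{p\alpha t}$, and hence $f_p(t) = \tilde f_p(t) \leq C_p e^{p\alpha t}$. The main obstacle is the combinatorial bookkeeping for the cross terms in $R^{(n)}$ and verifying that the resulting bound $L e^{p\alpha t}$ is uniform in $n$; this is where the restriction $p<\gamma_P$ enters (through the finiteness of $\dE[N^p]$), and this is the only place where a hypothesis on the offspring distribution beyond $\dE N = d$ is used.
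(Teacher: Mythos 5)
Your overall architecture is the paper's: induction on $p$ with the $p=1$ case from \S\ref{subsec:N1}, multinomial expansion of the RDE \eqref{eq:RDE2}, the explicit integral \eqref{eq:defjefio} to turn the lower-order moments (controlled by the induction hypothesis) into a term $L e^{p\alpha t}$, the hypothesis $p<\gamma_P$ to sum over the offspring number, and finally Lemma \ref{le:powerupp2} plus monotone convergence. The only real structural difference is the truncation: you cut the tree at generation $n$ (so the RDE becomes an exact identity between $Y^{(n+1)}$ and copies of $Y^{(n)}$), whereas the paper truncates the value, $Y^{(\kappa)}=\min(Y,\kappa)$, and uses $\min(\sum y_i,\kappa)\le\sum\min(y_i,\kappa)$ to keep a closed inequality for a single function $f_p^{(\kappa)}$.

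This difference is exactly where your write-up has a gap. You pass to the limit $\tilde f_p=\lim_n f_p^{(n)}$, observe $\tilde f_p\le\Psi(\tilde f_p)$, and then invoke Lemma \ref{le:powerupp2}. But that lemma assumes $f\in\cH_u$, i.e.\ an a priori bound $\sup_t f(t)e^{-p\alpha t}<\infty$; its proof starts the iteration from $g_0=f\le C_0e^{p\alpha t}$ with $C_0$ finite. For $\tilde f_p=f_p$ this is precisely the conclusion you are trying to reach (finiteness of $f_p$ is not available beforehand: Lemma \ref{le:YNfinite} would require $\dE'_\lambda[Z^p]<\infty$, which is only proved downstream), so the application is circular as stated. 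The paper's value-truncation avoids this because $f_p^{(\kappa)}\le\kappa^p$ is bounded, hence trivially in $\cH_p$, so Lemma \ref{le:powerupp2} applies at fixed $\kappa$ and gives a bound uniform in $\kappa$ before letting $\kappa\to\infty$. Your generation-truncation can be salvaged, but not by quoting Lemma \ref{le:powerupp2} on the limit: instead iterate your inequality $f_p^{(n+1)}\le Le^{p\alpha t}+\lambda d\,e^{-\lambda t}\int_0^t e^{(\lambda+1)x}\int_x^\infty f_p^{(n)}(s)e^{-s}\,ds\,dx$ in $n$, starting from $f_p^{(0)}\equiv1$; by \eqref{eq:defjefio} the constants obey $C_{n+1}\le L+\rho C_n$ with $\rho=\lambda d/((p\alpha+\lambda)(1-p\alpha))<1$ (this is where $p<\og$, i.e.\ $\lambda<\lambda_p$, enters), so $f_p^{(n)}(t)\le C_pe^{p\alpha t}$ uniformly in $n$, and monotone convergence finishes. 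A minor secondary point: "$T_n$ has finitely many vertices a.s." does not by itself give $f_p^{(n)}<\infty$; you need $\dE N^p<\infty$ (available since $p<\gamma_P$), or simply dispense with a priori finiteness by running the uniform-in-$n$ iteration above.
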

\begin{proof}
In \S \ref{subsec:N1}, we have computed $f_p$ for $p=1$ and found $f_1 (t) = (d e^{\alpha t} -1)/(d-1)$. Let $p \geq  2$ and assume now that the statement of Lemma \ref{le:rec} holds for $q =1, \cdots, p-1$. Let $\kappa > 0$, $Y^{(\kappa)}(t) = \min( Y(t), \kappa)$ and let $\leq_{st}$ denote the stochastic domination (beware that $Y^{(\kappa)} (t)$ is different from $Y^{(n)}(t)$ defined in  \S \ref{subsec:N1}). We use that if $y_i \geq 0$, $\min( \sum_i y_i  , \kappa ) \leq  \sum_i \min( y_i  , \kappa ) $. Hence, from RDE (\ref{eq:RDE2}), we have
 \begin{equation} \label{eq:ineqRDE0}
Y^{(\kappa)}(t)  \leq_{st}     1  +  \sum_{i=1} ^N   \ind ( \xi_i  \leq  t)  Y^{(\kappa)}_i ( t - \xi_i + D_i).
 \end{equation}
Recall the multinomial formula
$$
\left( \sum_{i=1} ^ n y_i \right)^p = \sum_{p_1, \cdots, p_n} {{n}\choose{p_1 \cdots p_n}} y_1^{p_1} \cdots y_n ^{p_n} .
$$
where the summation is taken over $n$-tuples of integers that sum up to $p$.
Taking power $p$ in the above stochastic inequality and expanding brutally, we thus get
\begin{equation*} \label{eq:ineqRDE}
Y^{(\kappa)}(t)  ^p  \leq_{st}     \sum_{p_1, \cdots, p_{N+1}} { {N+1}\choose{p_1 \cdots p_{N+1} } } \prod_{i=1}^N   \left( \ind_{p_i = 0} + \ind_{p_i \geq 1}  \ind ( \xi_i \leq  t)  Y^{(\kappa)}_i ( t - \xi_i + D_i) ^{p_i}  \right),
\end{equation*}
where the summation is taken over $N+1$-tuples of integers that sum up to $p$. Now we define $$f_p ^{(\kappa)} (t) = \dE'_\lambda \left[ Y^{(\kappa)} (t)^p \right] = \dE'_\lambda \left[  \min (Y (t) ,  \kappa )^p\right] .$$
Taking expectation and using independence leads to
\begin{eqnarray}
f^{(\kappa)}_p (t)   & \leq  & \sum_{n=0}^\infty P(n) \sum_{p_1, \cdots, p_{n+1}} { {n+1}\choose{p_1 \cdots p_{n+1} } }     \prod_{i=1}^n   \left( \ind_{p_i = 0} + \ind_{p_i \geq 1} \dE'_\lambda  \left[ \ind ( \xi  \leq  t)   Y^{(\kappa)}( t - \xi + D) ^{p_i} \right] \right)  \nonumber \\
 & \leq &  \sum_{n=0}^\infty P(n) \sum_{p_1, \cdots, p_{n+1}} { {n+1}\choose{p_1 \cdots p_{n+1} } }  \nonumber  \\
 & & \hspace{65pt} \times  \prod_{i=1}^n   \left( \ind_{p_i = 0} + \ind_{p_i \geq 1}  \lambda  e^{-\lambda t} \int_0 ^t e^{(\lambda + 1) x} \int_x^\infty  f^{(\kappa)}_{p_i} (s) e^{ -s } ds  dx \right). \label{eq:fprec0}
\end{eqnarray}
Consider a $n+1$-tuple that sums up to $p$ such that for all $i = 1 , \cdots, n$, $p_i \leq p-1$, $\sum_{i=1} ^{n} p_i = q \leq p$ and $\sum_{i=1} ^ n   \ind_{p_i \geq 1} = m \leq p$.  From the recursive hypothesis and \eqref{eq:defjefio}, with $L = \max_{1 \leq k \leq p-1} C_k$, we get
\begin{eqnarray*}
 \prod_{i=1}^n   \left( \ind_{p_i = 0} + \ind_{p_i \geq 1}  \lambda  e^{-\lambda t} \int_0 ^t e^{(\lambda + 1) x} \int_x^\infty  f^{(\kappa)}_{p_i} (s) e^{ -s } ds  dx \right) & \leq &  \prod_{i : p_i \geq 1} \frac{C_{p_i}  \lambda e^{ \alpha p_i t }  }{( \lambda +  p_i  \alpha )   ( 1 - p_i \alpha ) } \\
 & \leq &   L^m e^{ \alpha q t } e^{ - \sum_{i =1}^n  \ln ( 1 - p_i \alpha ) }.
 \end{eqnarray*}
Now recall that $| \ln ( 1 - y ) + y | \leq \frac {y^2}{2 (1- y)}$ for $y \in (0,1)$. Since $\sum_{i=1}^n p_i ^2 \leq q^2$, we get
\begin{eqnarray*}
\prod_{i=1}^n   \left( \ind_{p_i = 0} + \ind_{p_i \geq 1}  \lambda  e^{-\lambda t} \int_0 ^t e^{(\lambda + 1) x} \int_x^\infty  f_{p_i} (s) e^{ -s } ds  dx \right) & \leq &   L^m  e^{ \alpha q t } e^{ \alpha q + \frac{ \alpha^2 q^2}{ 2 (1 - p \alpha) } }\\
& \leq & L' e^{ \alpha p t }.
 \end{eqnarray*}
 Then, grouping together all such $n+1$-tuples, from (\ref{eq:fprec0}) we deduce
 \begin{eqnarray}
 f^{(\kappa)}_p (t)  & \leq &   \sum_{n=0}^\infty P(n) \left((n+1)^p  L' e^{ \alpha p t }  + n \lambda  e^{-\lambda t} \int_0 ^t e^{(\lambda + 1) x} \int_x^\infty  f^{(\kappa)}_{p} (s) e^{ -s } ds  dx \right)  \nonumber \\
 & \leq &   L'' e^{ \alpha p t }  + \lambda d e^{-\lambda t} \int_0 ^t e^{(\lambda + 1) x} \int_x^\infty  f^{(\kappa)}_{p} (s) e^{ -s } ds  dx , \label{eq:fprec}
 \end{eqnarray}
where  we have used the hypothesis $p < \gamma_{P}$.   We may then apply Lemma \ref{le:powerupp2}: there exists a constant $C_{p}$ such that
$$ f_1^{(\kappa)} (t)\leq  C_p e^{ \alpha p t}.$$
 The monotone convergence Theorem implies that $f_p (t) = \lim_{\kappa \to \infty}
f_p^{\kappa)} (t)$ exists and is bounded by $C_p e^{ \alpha p t}$. The recursion is complete.
\qed\end{proof}

\subsection{Proof of Theorem \ref{th:RStail} : lower bound on $\gamma (\lambda)$}
\label{subsec:RStailLB}

To prove Theorem \ref{th:RStail}, we shall prove two statements

\begin{equation}
\label{eq:power1}
\hbox{If $ \dE'_\lambda [ Z^u]  < \infty$  then }  u \leq \overline \gamma,
\end{equation}

\begin{equation}
\label{eq:power2}
\hbox{If $1 < u < \min(\overline \gamma,\gamma_{P})$ then } \dE'_\lambda [ Z^u]  < \infty.
\end{equation}

In this paragraph, we prove (\ref{eq:power2}). The argument is a refinement of the argument in \S \ref{subsec:rec}. Let $\kappa > 0$ and let $f_u ^{(\kappa)} (t) =\dE'_\lambda [\min (Y(t),\kappa)^u]$, we have the following lemma.
\begin{lemma}
\label{le:powerupp}
If $1 < u < \min(\overline \gamma,\gamma_{P})$, there exists a constant $L_u > 0$ such that for all $t \geq 0$ and $\kappa > 0$,
$$
f_u ^{(\kappa)} (t)   \leq   L_u  e^{u \alpha t }   + \lambda d e^{-\lambda t} \int_0 ^t e^{(\lambda + 1) x} \int_x^\infty  f^{(\kappa)}_{u} (s) e^{ -s } ds  dx.
$$
\end{lemma}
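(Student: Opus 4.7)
The strategy refines the proof of Lemma~\ref{le:rec} by a convexity inequality adapted to non-integer $u$. From RDE~\eqref{eq:RDE2} together with the subadditivity $\min(\sum y_i, \kappa) \leq \sum \min(y_i, \kappa)$, one obtains the stochastic domination
$$Y^{(\kappa)}(t) \leq_{st} 1 + \sum_{i=1}^N W_i, \qquad W_i = \ind(\xi_i \leq t)\, Y_i^{(\kappa)}(t - \xi_i + D_i),$$
with the $W_i$ iid copies of $W$, independent of $N$. The objective is to prove
$$\dE'_\lambda\Big[\big(1 + \textstyle\sum_i W_i\big)^u\Big] \leq L_u\, e^{u\alpha t} + d\,\dE'_\lambda[W^u],$$
since $d\,\dE'_\lambda[W^u]$ reproduces, after the usual change of variables, the integral term with coefficient exactly $\lambda d$.

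The core pointwise tool is the convexity bound, valid for $u \in (1, 2]$ and $a, b \geq 0$,
$$(a+b)^u \leq a^u + b^u + u\, a^{u-1} b,$$
derived from $(a+b)^u - a^u - b^u = u(u-1)\int_0^a\!\int_0^b (s+t)^{u-2}\,ds\,dt$ via $(s+t)^{u-2} \leq s^{u-2}$. For $u > 2$, the subadditivity or convexity of $x^{u-2}$ yields a symmetric version $(a+b)^u \leq a^u + b^u + C_u(a^{u-1} b + a\, b^{u-1})$ for some $C_u > 0$. Iterating with $a = S_{k-1} = W_1 + \cdots + W_{k-1}$ and $b = W_k$ produces
$$S_n^u \leq \sum_{k=1}^n W_k^u + R_n,$$
where $R_n$ collects cross contributions of the form $S_{k-1}^{u-1} W_k$ and, when $u > 2$, $S_{k-1}\,W_k^{u-1}$. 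The diagonal coefficient is exactly one.

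Taking conditional expectation given $N = n$, the diagonal contributes $n\,\dE'_\lambda[W^u]$, yielding $d\,\dE'_\lambda[W^u]$ after averaging over $N$. For $u \in (1,2)$, Jensen's inequality (concavity of $x^{u-1}$) bounds $\dE'_\lambda[S_{k-1}^{u-1}] \leq ((k-1)\dE'_\lambda[W])^{u-1}$, so summation over $k$ and averaging over $N$ control $\dE'_\lambda[R_n]$ by $\dE'_\lambda[N^u]\,\dE'_\lambda[W]^u \leq C\,e^{u\alpha t}$, finite because $u < \gamma_P$ and $\dE'_\lambda[W] \leq C'e^{\alpha t}$ by Theorem~\ref{th:RStail2}. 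For $u > 2$, Hölder's inequality together with the integer moment bound $\dE'_\lambda[S_{k-1}^p] \leq C_p(k-1)^p\,e^{p\alpha t}$ from Lemma~\ref{le:rec} (applicable since $\lambda < \lambda_u \leq \lambda_p$ and $p \leq \lfloor u \rfloor < \gamma_P$) gives $\dE'_\lambda[S_{k-1}^{u-1}] \leq C\,(k-1)^{u-1} e^{(u-1)\alpha t}$, again dominating $\dE'_\lambda[R_n]$ by a constant multiple of $\dE'_\lambda[N^u]\,e^{u\alpha t}$. Finally the passage from $S^u$ to $(1+S)^u$ uses $(1+S)^u \leq S^u + u(1+S)^{u-1}$ together with a convexity or subadditivity bound on $(1+S)^{u-1}$; the resulting extra moments $\dE'_\lambda[S^v]$ with $v < u$ are controlled by the same machinery and contribute only to $L_u\,e^{u\alpha t}$.

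The principal obstacle is preserving the coefficient $d$ exactly in front of $\dE'_\lambda[W^u]$. Any multiplicative factor $C_u > 1$ would invalidate the subsequent application of Lemma~\ref{le:powerupp2}, whose contraction condition $\lambda d < (u\alpha+\lambda)(1-u\alpha)$ is sharp on the interval $[1,\overline\gamma)$. The iterated convexity scheme above is designed precisely so that each step contributes a new $W_k^u$ with coefficient one, while all mixed contributions involve strictly smaller individual exponents and, through the integer moment bounds of Lemma~\ref{le:rec} combined with the finiteness of $\dE'_\lambda[N^u]$, produce only the subexponential remainder $L_u\,e^{u\alpha t}$.
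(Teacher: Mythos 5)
Your proof is correct, and it reaches the paper's inequality through a different expansion device inside the same overall architecture. The paper writes $u=p+v$ with $p=\lfloor u\rfloor$, $v\in(0,1)$, combines the subadditivity $(\sum y_i)^v\le\sum y_i^v$ with the multinomial formula for $(\sum y_i)^p$ (inequality \eqref{eq:multiu}), and then reruns the bookkeeping of Lemma \ref{le:rec}: the multi-indices putting the whole weight $u$ on a single offspring have multinomial coefficient $1$ and multiplicity $N$, which produces the exact factor $\lambda d$ in front of the $f_u^{(\kappa)}$ integral, while every mixed multi-index carries strictly smaller exponents on each factor and is absorbed into $L_u e^{u\alpha t}$ via Lemma \ref{le:rec}, Jensen and $u<\gamma_P$. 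You instead iterate a binary Rosenthal-type inequality $(a+b)^u\le a^u+b^u+(\hbox{cross terms})$ along the partial sums $S_{k}$, which isolates the diagonal $\sum_k W_k^u$ with coefficient exactly one, hence $d\,\dE'_\lambda[W^u]$ after Wald, and this equals the integral term with coefficient $\lambda d$ after the change of variables, as you note; the cross terms are then controlled in the same spirit as the paper (lower-order moments from Lemma \ref{le:rec}, Jensen/H\"older, independence of $S_{k-1}$ and $W_k$, and $\dE'_\lambda[N^u]<\infty$ from $u<\gamma_P$). Your route avoids the multinomial bookkeeping and makes the preservation of the coefficient $d$ very transparent, at the cost of a case split $u\le 2$ versus $u>2$ and of two transfer steps you state tersely: Lemma \ref{le:rec} bounds $\dE'_\lambda[Y(t)^p]$, so to get $\dE'_\lambda[S_{k-1}^{p}]\le C(k-1)^{p}e^{p\alpha t}$ you should first integrate against the law of $(\xi,D)$ to obtain $\dE'_\lambda[W^{p}]\le C e^{p\alpha t}$ (using $p\alpha<u\alpha<1$), then apply the power-mean inequality to the $k-1$ i.i.d.\ summands and Jensen to interpolate to the exponent $u-1$; moreover the applicability of Lemma \ref{le:rec} at $p=\lfloor u\rfloor$ rests on $p<\gamma_P$ and $p<\og$ (equivalently $\lambda<\lambda_p$), both implied by $u<\min(\og,\gamma_P)$. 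With these routine details filled in, your bounds are uniform in $\kappa$ and carry only factors $e^{q\alpha t}$ with $q\le u$, so the constant $L_u$ is independent of $t$ and $\kappa$, as required.
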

\begin{proof}
The lemma is already proved if $u$ is an integer in (\ref{eq:fprec}). The general case extends of the same argument. We write $u = p  + v$ with $v \in (0,1)$ and integer $p \geq 1$. We use the inequality, for all $y_i \geq 0$, $1 \leq i \leq n$,
\begin{equation}\label{eq:multiu}
\left( \sum_{i=1} ^n y_i  \right) ^ u \leq  \sum_{ i = 1} ^n  \sum_{p_1, \cdots, p_n} {{n}\choose{p_1 \cdots p_n}}  y_i ^ {p_i + v} \prod_{1 \leq  j  \leq n ,  j  \neq i } y_j^{p_j},
\end{equation}
where the summation is taken over $n$-tuples of integers that sum up to $p$ (which follows from the inequality $(\sum y_i) ^{v} \leq \sum y_i^{v}$ and the multinomial formula). Then from (\ref{eq:ineqRDE0}),  we get the stochastic domination
\begin{eqnarray}
Y^{(\kappa)}(t)  ^u  & \leq_{st}  &    \sum_{ i = 1} ^N \sum_{p_1, \cdots, p_{N+1}} { {N+1}\choose{p_1 \cdots p_{N+1} } }  \left( \ind_{p_i = 0} + \ind_{p_i \geq 1}  \ind ( \xi_i \leq  t)  Y^{(\kappa)}_i ( t - \xi_i + D_i) ^{p_i+ v}  \right) \nonumber \\
& & \hspace{50pt} \times  \prod_{1 \leq  j  \leq N ,  j  \neq i }   \left( \ind_{p_j = 0} + \ind_{p_j \geq 1}  \ind ( \xi_j \leq  t)  Y^{(\kappa)}_j ( t - \xi_j + D_i) ^{p_j}  \right), \label{eq:ineqRDEu}
\end{eqnarray}
where the summation is taken over $N+1$-tuples of integers that sum up to $p$. From Lemma \ref{le:rec}, for all $1 \leq q \leq p $, $f_q (t) \leq C_q e^{q \alpha t}$. Note also, by Jensen inequality, that for all $1 \leq q \leq p-1$,
$f_{q+v} (t) \leq  f_{p} (t) ^{\frac{q+v}{p}} \leq C_p e
^{(q+v)\alpha t}$.
The same argument (with $p$ replaced by $u$)
which led to (\ref{eq:fprec}) in the proof of Lemma
\ref{le:rec} leads to the result. \qed\end{proof}

Statement (\ref{eq:power2}) is a consequence of Lemma  \ref{le:powerupp2} and Lemma \ref{le:powerupp}. Indeed, by Lemma \ref{le:powerupp2}, for all $t \geq 0$, $f_u ^{(\kappa)} (t)\leq C_u e^{u \alpha t}$ for some positive constant $C_u$ independent of $\kappa$. From the monotone convergence Theorem, we deduce that, for all $t \geq 0$, $f_u (t) \leq C_u e^{u \alpha t}$. However from $Z \stackrel{d}{=} Y(D)$, we find
$$
\dE'_\lambda Z^u = \int_0 ^\infty f_u (t) e^{ - t} dt \leq \int_0 ^\infty   C_u e^{u \alpha t} e^{ - t}dt.
$$
Then, statement (\ref{eq:power2}) follows from  $u \alpha < 1$.

\subsection{Proof of Theorem \ref{th:RStail} : upper bound on $\gamma (\lambda)$}

\label{subsec:RStailUB}

In this paragraph, we prove statement (\ref{eq:power1}). This will conclude the proof of Theorem  \ref{th:RStail}.  Let $u > 1$, we assume that $ \dE'_\lambda[  Z^u]� < \infty$ we need to show that $\lambda < \lambda_u$. Without loss of generality we can assume that $\lambda < \lambda_1$. From Lemma \ref{le:YNfinite} and (\ref{eq:RDE2}), we get
 \begin{equation*}
\label{eq:RDEp}
f_u (t) = \dE'_\lambda [ Y(t)^u  ] = \dE'_\lambda \left(  1 +  \sum_{i = 1}^N \ind ( \xi_i  \leq  t)  Y_i ( t - \xi_i + D_i) \right)^u .
\end{equation*}
Taking expectation and using the inequality $(x + y)^u \geq x^u + y^u$, for all positive $x$ and $y$, we get:
\begin{eqnarray}
f_u (t)  &  \geq & 1 +   \lambda d e^{-\lambda t} \int_0 ^t e^{(\lambda + 1) x} \int_x^\infty  f_{u} (s) e^{ -s } ds  dx.  \label{eq:gplow}
\end{eqnarray}
From Jensen's Inequality, $f_u (t) \geq f_1 (t) ^ u \geq e^{u \alpha t}$. Note that the integral $\int_x ^ {\infty} e^{\alpha u s}  e^{-s} ds$ is finite if and only if  $u <   \alpha^{-1}$.  Suppose now that $\overline \gamma < u < \alpha^{-1}$.  We use the fact that if  $ u > \overline \gamma$ then   $u^2 \alpha^2  - (1 - \lambda)  u\alpha + \lambda (d-1) > 0$. It implies that there exists $0 < \epsilon < \lambda$ such that
\begin{equation}
\label{eq:defeps}
u^2 \alpha^2  - (1 - \lambda)  u\alpha + \lambda (d-1)  >   \epsilon d .
\end{equation}
We define $\tilde \lambda = \lambda - \epsilon$, $\tilde \Delta (\epsilon) = (1 - \lambda)^2 - 4 (\tilde \lambda d - \lambda)$. Note that $\tilde \Delta (0) =\Delta$.  Since $\lambda < \lambda_1$, for $\epsilon$ small enough, $\tilde \Delta$ is non-negative, we may then consider  the real roots of $X^2 - (1 - \lambda) X + \tilde \lambda d - \lambda$:
$$ \tilde \alpha  (\epsilon)= \frac { 1 - \lambda - \sqrt { \tilde\Delta} }{ 2} \quad \hbox{ and } \quad \tilde \beta (\epsilon) =\frac { 1 - \lambda + \sqrt{ \tilde\Delta} }{ 2}.$$
Again, for $\epsilon =0$, $\tilde \alpha (0) = \alpha$ and $\tilde \beta (0) = \beta$. Hence, since $ u > \og =  \beta / \alpha$, by continuity, for $\epsilon$ small enough,
\begin{equation}
\label{eq:deftilde2}
u \alpha > \tilde \beta.
\end{equation}
We compute a lower bound from (\ref{eq:gplow}) as follows:
\begin{eqnarray*}
f_u (t)  &  \geq &1 + \epsilon d e^{-\lambda t} \int_0 ^t e^{(\lambda + 1) x} \int_x^\infty  f_{u} (s) e^{ -s } ds  dx +   \tilde \lambda d e^{-\lambda t} \int_0 ^t e^{(\lambda + 1) x} \int_x^\infty  f_{u} (s) e^{ -s } ds  dx \nonumber \\
& \geq & 1 + \epsilon d e^{-\lambda t} \int_0 ^t e^{(\lambda + 1) x} \int_x^\infty  e^{ u \alpha s} e^{ -s } ds  dx +   \tilde \lambda d e^{-\lambda t} \int_0 ^t e^{(\lambda + 1) x} \int_x^\infty  f_{u} (s) e^{ -s } ds  dx   \nonumber \\
& \geq & 1 + L  ( e^{u \alpha t } - e^{-\lambda t} )  +  \tilde \lambda d e^{-\lambda t} \int_0 ^t e^{(\lambda + 1) x} \int_x^\infty  f_{u} (s) e^{ -s } ds  dx  \label{eq:gplow2},
\end{eqnarray*}
with $$L =  \frac{ \epsilon  d }{ ( \lambda + u\alpha )(1 - u\alpha) } >0.$$
We consider the mapping $\Psi : h \mapsto 1 + L  ( e^{u \alpha t } - e^{-\lambda t} )  + \tilde \lambda \int_0 ^ t e^x \int_x ^ {\infty} f_u (s) e^{-s} ds dx$. $\Psi$ is monotone: if for all $t \geq 0$, $h_1 (t) \geq  h_2 (t)$ then for all $t \geq 0$, $\Psi (h_1) (t) \geq \Psi (h_2) (t)$.   Since, for all $t \geq 0$,  $f_u(t)  \geq \Psi (f_u)(t) \geq 1$, we deduce by iteration that there exists a function $h$ such that $h = \Psi (h) \geq 1$.  As in \S \ref{subsec:N1}, solving $h = \Psi (h)$ is simple, taking twice the derivative, we get,
$$
h'' - (1 - \lambda) h' + (\tilde \lambda d - \lambda)  h = - \lambda - L  ( \lambda + u\alpha )(1 - u\alpha) e^{u \alpha t} .
$$
Therefore, $h = a e^{\tilde \alpha t } + b e^{\tilde \beta t } - \epsilon ( u^2 \alpha^2  - (1 - \lambda)  u\alpha + \lambda (d-1) ) ^{-1}e^{u \alpha t }  $ for some constant $a$ and $b$. From (\ref{eq:deftilde2}) the leading term as $t$ goes to infinity is equal to $- \epsilon ( u^2 \alpha^2  - (1 - \lambda)  u\alpha + \lambda (d-1) ) ^{-1}e^{u \alpha t } $. However from (\ref{eq:defeps}), $- \epsilon ( u^2 \alpha^2  - (1 - \lambda)  u\alpha + \lambda (d-1) ) ^{-1}  < 0$ and it contradicts the assumption that $h (t) \geq 1$ for all $t \geq 0$. Therefore we have proved that $u \leq \overline \gamma$. \qed

\section*{Appendix}

In this appendix, for the sake of completeness we include the proof of the following lemma on Galton-Watson trees.

\begin{lemma}\label{le:daryGWT}
Let $T$ be a Galton-Watson tree with mean number of offsprings $d  > 1$. Conditioned on $T$ is infinite, $T$ is a.s. lower $d$-ary.
\end{lemma}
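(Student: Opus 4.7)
The ``lower $d$-ary'' property is monotone in $\delta$: if it holds for some $\delta$, it holds for all $\delta' \in (1, \delta)$. Hence, by taking a countable sequence $\delta_k \uparrow d$ and intersecting, it suffices to show that for each fixed $\delta \in (1, d)$ the event
\[
C_\delta = \{\exists\, n \geq 1,\ v \in V : \text{the subtree of descendants of } v \text{ in } T^{*n} \text{ contains a } \lceil \delta^n \rceil\text{-ary tree}\}
\]
has conditional probability $1$ given $\{T \text{ infinite}\}$.

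Fix such a $\delta$, set $m_n = \lceil \delta^n \rceil$ and $s = \dP(T \text{ infinite}) > 0$, and let $Z_n = |V_n|$. The Seneta--Heyde theorem cited in the introduction yields $Z_n^{1/n} \to d$ a.s.\ on $\{T \text{ infinite}\}$, and since $\delta < d$ this implies $q_n := \dP(Z_n \geq m_n) \to s$. Because $m_n \to \infty$, I may fix $n$ large enough that $m_n q_n > 1$. I then produce, with positive probability, an embedded $m_n$-ary subtree of $T^{*n}$ rooted at $\so$ by a thinning construction: declare $\so$ marked and, inductively, whenever a marked vertex $v$ has at least $m_n$ descendants at the $n$-th generation below it in $T$, mark the $m_n$ lexicographically smallest of them (otherwise give $v$ no marked children). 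Since the subtrees of $T$ rooted at the $n$-th generation descendants of any marked vertex are i.i.d.\ copies of $T$, and are independent across distinct marked vertices, the marked set forms a Galton--Watson tree whose offspring law takes the value $m_n$ with probability $q_n$ and $0$ otherwise. Its mean is $m_n q_n > 1$, so it is supercritical and survives with some probability $\theta > 0$; on that survival event the marked vertices form a complete $m_n$-ary subtree of $T^{*n}$ rooted at $\so$. Consequently $p := \dP(\so \text{ roots such an embedding in } T^{*n}) \geq \theta > 0$.

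To upgrade from positive probability to almost sure on survival, condition on $\cF_k = \sigma(T \text{ up to generation } k)$: the subtrees $T_v$ rooted at the $Z_k$ vertices of $V_k$ are conditionally i.i.d.\ copies of $T$, so the conditional probability that none of them roots an $m_n$-ary embedding in $T^{*n}$ is $(1 - p)^{Z_k}$, which tends to $0$ a.s.\ on $\{T \text{ infinite}\}$ because $Z_k \to \infty$ there. Hence almost surely on survival some vertex $v$ has the required embedding in its descendants in $T^{*n}$, i.e.\ $C_\delta$ holds; intersecting over $\delta_k$ concludes the proof.

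The only delicate point is the supercriticality condition $m_n q_n > 1$, which relies on the quantitative input $q_n \to s > 0$ from Seneta--Heyde (so that $q_n$ cannot decay faster than $m_n$ grows). Everything else—the thinning, the Galton--Watson embedding, and the i.i.d.-subtree boost from positive to almost-sure probability—is routine.
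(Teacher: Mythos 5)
Your framework is fine up to the thinning step: the reduction to a fixed $\delta$ by monotonicity, the Seneta--Heyde argument giving $q_n=\dP(Z_n\geq m_n)\to s$, the coupling of a Galton--Watson tree with offspring law ($m_n$ with probability $q_n$, $0$ otherwise) inside $T^{*n}$ (this is exactly the tree $T'$ of the paper), and the final upgrade from positive probability to a.s.\ on survival via conditionally i.i.d.\ subtrees of $V_k$ (a legitimate alternative to the paper's Borel--Cantelli over $\veps_k=1/k^2$). The gap is the central claim that ``on that survival event the marked vertices form a complete $m_n$-ary subtree of $T^{*n}$.'' Survival of the marked Galton--Watson tree only says the marked set is infinite; since its offspring law puts mass $1-q_n>0$ on $0$, the surviving marked tree contains marked vertices with no marked children and is not an $m_n$-ary tree. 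Worse, because each marked vertex has either $0$ or exactly $m_n$ marked children, the probability $\pi$ that a marked vertex roots a \emph{complete} $m_n$-ary subtree of the marked tree satisfies $\pi=q_n\pi^{m_n}$, whose only root in $[0,1]$ is $\pi=0$ when $q_n<1$. So your construction produces the required $\lceil\delta^n\rceil$-ary tree with probability zero, not $\theta>0$: supercriticality $m_nq_n>1$ buys an infinite subtree, not an $m_n$-ary skeleton, and the quantity $\dP(\so\hbox{ roots such an embedding})$ that you call $p$ has not been shown positive.

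What is missing is precisely the extraction step, and it requires room to spare: one must aim for a $k$-ary subtree with $k$ strictly below the typical number $q\,m$ of fertile children. The paper does this by running the construction with some $\delta'\in(\delta,d)$, i.e.\ $m=\lfloor\delta'^n\rfloor$ children per fertile vertex, and then iteratively pruning $T'$: at each step remove every vertex with fewer than $(1-q-2\veps)m$ surviving children, and show by induction, using Hoeffding's inequality, that the removal probability stays below $q+\veps$ at every stage. This yields, with probability at least $p-2\veps$, a complete $\lfloor(p-3\veps)\lfloor\delta'^n\rfloor\rfloor$-ary subtree rooted at $\so$, which exceeds $\lceil\delta^n\rceil$ for $n$ large since $\delta'>\delta$. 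If you insert such an extraction lemma (the pruning argument, or an equivalent fixed-point analysis of containment of $k$-ary subtrees in a supercritical tree) between your thinning and your a.s.\ upgrade, the rest of your proof goes through; without it the argument fails at its key step.
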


\begin{proof}
Let $1 < \delta < d$ and $Z_n = |V_n| $ be the number of offsprings of generation $n$. From Seneta-Heyde Theorem (see \cite[Chapter 5]{lyonsbook}), conditioned on $T$ is infinite, a.s. 
$$\lim_{n \to \infty} \frac 1 n \log Z_n  = d.$$
Let $p > 0$ be the probability that $T$ is infinite. It implies that for any $ \veps > 0$, for all $n$ large enough, we have  $\dP ( Z_n \geq \delta ^ n ) \geq p - \veps$.

Now, consider a new Galton-Watson tree $T'$ starting from the root $\o$ where each vertex produces independently $m = \lfloor \delta ^ n \rfloor$ offsprings with probability $p - \veps$ and $0$ offspring otherwise. From what precedes, we may couple $T$ and $T'$ such that $T'$ is a subtree of $T^{*n}$.

We are now going to prove that $T'$ contains a large regular tree with probability at least $p - 2 \veps$. To this end, we set $$q = q(\veps) = 1 -  p + \veps.$$
Note that we may have chosen $n = n(\veps)$ large enough so that
\begin{equation}\label{eq:defqnveps}
q + ( 1 - q) e^{ -  m \veps^2 /2}  \leq q + \veps.
\end{equation}

We consider the following pruning algorithm on $T'$. At step $0$, we start with all vertices of $T'$. At step $1$, we remove all vertices which have less than $( 1 - q - 2 \veps ) m$ offsprings. We now iterate: at step $k \geq 1$, we remove all vertices which have less than $( 1 - q - 2 \veps ) m$ offsprings left by step $k-1$.

Denote by $\rho_k$ the probability that the root of $T'$ is removed by step $k$. We have $\rho_0 = 0$, $\rho_1 = q$ and $(\rho_k)_{k \geq 0}$ is an non-decreasing sequence. We are going to check by recursion that for all $k \geq 1$,
\begin{equation}\label{eq:rhokrec}
\rho_k \leq q + \veps.
\end{equation}
Indeed, let $k \geq 1$ and assume that $\rho_{k-1} \leq q + \veps$. Note that if the root is removed by step $k$, then either it has $0$ offspring or more than $( q + 2 \veps ) m$ of its offsprings were removed by step $k-1$. From the recursive structure of the Galton-Watson tree, the probability that an offspring was removed by step $k-1$ is $\rho_{k-1}$ and these events for each offspring are independent. It follows that
\begin{eqnarray*}
\rho_{k} \leq q + (1- q) \dP \PAR{ \sum_{i=1} ^m X_i \geq ( q + 2 \veps) m } ,
\end{eqnarray*}
where $(X_i)_{1 \leq i \leq m}$ are i.i.d. $\{ 0, 1 \}$-Bernoulli variables with mean $\rho_{k-1}$. By recursion hypothesis, $\rho_{k-1} \leq q + \veps$. Hence, Hoeffding's inequality leads to
$$
\dP \PAR{ \sum_{i=1} ^m X_i \geq ( q + 2 \veps) m } \leq \dP \PAR{ \sum_{i=1} ^m ( X_i - \dE X_i )  \geq  \veps m } \leq e^{ -  m \veps^2 /2}.
$$
From \eqref{eq:defqnveps}, we deduce that $\rho_k \leq q + \veps$. This proves \eqref{eq:rhokrec}.

We have thus proven that with probability at least $1 - ( q+\veps)  = p - 2 \veps$, the root of $T$ is never removed by the pruning algorithm. However, on the latter event, by construction $T'$ contains a $\lfloor ( 1 - q - 2 \veps ) m \rfloor $-ary tree rooted at $\o$ (note that $1 -  q - 2 \veps  = p - 3 \veps$).

We may now conclude the proof. We apply the above argument to some $\delta' \in ( \delta, d)$. This proves that for any $0 < \veps < 1$, there exists an integer $n_\veps$ such that with probability at least $(1 - \veps)p $, $T^{*n_\veps}$ contains a $\lceil \delta^{n_\veps} \rceil$-ary tree. Note that the latter event is contained in the event that $T$ is infinite. It follows that the conditional probability that $T^{*n_\veps}$ contains a $\lceil \delta^{n_\veps} \rceil$-ary tree, given $T$ infinite, is at least $1 - \veps$.

We finally consider the sequence $\veps_k = 1 / k^2$. From Borel-Cantelli lemma, conditioned on $T$ infinite, a.s. there exists $k$ such that $T^{*n_{\veps_k}}$ contains a $\lceil \delta^{n_{\veps_k}} \rceil$-ary tree. \qed \end{proof}

%%                                                               %%
%% Use the two commands below for producing your bibliography    %%
%% with bibtex, then comment again the commands and include the  %%
%% content of the .bbl file in this file below the commands.     %%
%%                                                               %%

%\bibliographystyle{amsplain}
%\bibliography{mat}

\providecommand{\bysame}{\leavevmode\hbox to3em{\hrulefill}\thinspace}
\providecommand{\MR}{\relax\ifhmode\unskip\space\fi MR }
% \MRhref is called by the amsart/book/proc definition of \MR.
\providecommand{\MRhref}[2]{%
  \href{http://www.ams.org/mathscinet-getitem?mr=#1}{#2}
}
\providecommand{\href}[2]{#2}

%%                                                               %%
%% You may add acknowledgments (optional).                       %%
%%                                                               %%

\ACKNO{This work has benefited from the active  participation of Ghurumuruhan Ganesan, notably on Section 3.  The author thanks also Elie A\"{i}d\'ekon and Jeremy Quastel for enlightening discussions on the analogy between the Brunet-Derrida model and the chase-escape process. 
}

%%                                                               %%
%% You have reached the end of your document.                    %%
%%                                                               %%

\end{document}